\newtheorem{thm}{Theorem}
\newtheorem{cor}[thm]{Corollary}
\newtheorem{lem}[thm]{Lemma}
\newtheorem{ques}{Open Question}
\newtheorem{prop}[thm]{Proposition}
\newtheorem{rem}[thm]{Remark}
\theoremstyle{definition}
\def\XXint#1#2#3{{\setbox0=\hbox{$#1{#2#3}{\int}$}
  \vcenter{\hbox{$#2#3$}}\kern-.5\wd0}}
                \newcommand{\lda}{\lambda}
\newcommand{\om}{\Omega}                \newcommand{\pa}{\partial}
\newcommand{\va}{\varepsilon}           \newcommand{\ud}{\,\mathrm{d}}
\newcommand{\be}{\begin{equation}}      \newcommand{\ee}{\end{equation}}
\newcommand{\R}{\mathbb{R}}
\DeclareMathOperator{\supp}{supp}
\begin{document}

\title{\textbf{On a Rayleigh-Faber-Krahn inequality for the regional fractional Laplacian}}

\author{Tianling Jin\footnote{T. Jin is partially supported by Hong Kong RGC grants ECS 26300716 and GRF 16302519.},\ \  Dennis Kriventsov,\ \  Jingang Xiong\footnote{J. Xiong is partially supported by NSFC 11922104 and 11631002.}}

\date{\today}

\maketitle

\begin{abstract}
We study a Rayleigh-Faber-Krahn inequality for regional fractional Laplacian operators. In particular, we show that there exists a compactly supported nonnegative Sobolev function $u_0$ that attains the infimum (which will be a positive real number) of the set
\[
\left\{ \iint_{\{u > 0\}\times\{u>0\}} \frac{|u(x) - u(y)|^2}{|x - y|^{n + 2 \sigma}}\ud x\ud y : u \in \mathring H^\sigma(\R^n), \int_{\R^n} u^2 = 1, |\{u > 0 \}| \leq 1\right\}.
\]
Unlike the corresponding problem for the usual fractional Laplacian, where the domain of the integration is $\R^n \times \R^n$, symmetrization techniques may not apply here. Our approach is instead based on the direct method and new \emph{a priori} diameter estimates. We also present several remaining open questions concerning the regularity and shape of the minimizers, and the form of the Euler-Lagrange equations.
\end{abstract}

\section{Introduction}

Let $n\ge 1$, $\sigma\in (0,1)$ (with the additional assumption that $\sigma<1/2$ if $n=1$), and $\Omega\subset\R^n$ be an open set. There are two natural fractional Sobolev norms which may be defined for $u \in C^\infty_c(\Omega)$:
$$
I_{n,\sigma,\R^n}[u] := \iint_{\R^n\times\R^n} \frac{(u(x)-u(y))^2}{|x-y|^{n+2\sigma}} \,\ud x\,\ud y
$$
and
$$
I_{n,\sigma,\Omega}[u] := \iint_{\Omega\times\Omega} \frac{(u(x)-u(y))^2}{|x-y|^{n+2\sigma}} \,\ud x\,\ud y.
$$
Depending on the choices of $n,\sigma$ and $\Omega$,  these two norms may or may not be equivalent.  Even when they are equivalent (see Lemma \ref{lem:equivalentnorms}), there are still subtle differences in how they depend on the domain $\Omega$.

One significant difference is the behavior of their corresponding  best Sobolev constants:
\[
S_{n,\sigma}(\Omega):= \inf\left\{I_{n,\sigma,\Omega}[u]: u \in C^\infty_c(\Omega),  \int_{\Omega} |u|^\frac{2n}{n-2\sigma}\,\ud x=1\right\}
\]
and
\[
\widetilde S_{n,\sigma}(\Omega):= \inf\left\{I_{n,\sigma,\R^n}[u]: u \in C^\infty_c(\Omega), \int_{\Omega} |u|^\frac{2n}{n-2\sigma}\,\ud x=1 \right\}\,.
\]
Clearly, $\widetilde S_{n,\sigma}(\Omega) \geq \widetilde S_{n,\sigma}(\R^n)$ and, in fact, using the dilation or translation invariance of $\widetilde S_{n,\sigma}(\R^n)$, it is not difficult to see that
$$
\widetilde S_{n,\sigma}(\Omega) = \widetilde S_{n,\sigma}(\R^n)=S_{n,\sigma}(\R^n) \,.
$$
Moreover, a result of Lieb \cite{Lieb}, classifies all minimizers for $\widetilde S_{n,\sigma}(\R^n)$ and shows that they do not vanish anywhere on $\R^n$. Therefore, the infimum $\widetilde S_{n,\sigma}(\Omega)$ is not attained unless $\Omega=\R^n$.

However, in \cite{FJX}, two of the authors with R. Frank discovered that  the minimization problem for $S_{n,\sigma}(\Omega)$ behaves differently from $\widetilde S_{n,\sigma}(\Omega)$. Let us first recall some qualitative results about  whether the constant $S_{n,\sigma}(\Omega)$ is positive or zero: 
\begin{itemize}
\item For $n\geq 2$ and $\sigma>1/2$, one has $S_{n,\sigma}(\Omega)>0$ for any open set $\Omega$. This follows from Dyda-Frank \cite{DyFr}, which even shows that $\underline{S}_{n,\sigma}:=\inf_{\Omega}S_{n,\sigma}(\Omega)>0$. 

\item When $n\geq 1$ and $\sigma<1/2$, one has $S_{n,\sigma}(\Omega)=0$ for any open set $\Omega$ of finite measure with sufficiently regular boundary; see Lemma 16 in \cite{FJX}. 

\item However, one has $S_{n,\sigma}(\Omega)>0$ for $n\geq 1$ and $\sigma<1/2$ if $\Omega$ is the complement of the closure of a bounded Lipschitz domain or a domain above the graph of a Lipschitz function. This follows from the Sobolev inequality on $\R^n$ and the Hardy inequality from Dyda \cite{Dy}. 
\end{itemize}

More quantitatively speaking, it was shown in \cite{FJX} that the best constant $S_{n,\sigma}(\Omega)$ depends on the domain $\Omega$, and it can be achieved in many cases assuming that $n\ge 4\sigma$:
 
\begin{itemize}
\item If  the complement $\Omega^c$ has an interior point, then  
$
S_{n,\sigma}(\om)<S_{n,\sigma}(\R^n). 
$
\item If $\sigma\neq 1/2$, then $S_{n,\sigma}(\R^n_+)$ is achieved (see also Musina-Nazarov \cite{MN}).     
\item If $\sigma>1/2$, $\Omega$ is a bounded domain such that $B_1^+\subset\Omega\subset \R^n_+$, 
then $
S_{n,\sigma}(\om)<S_{n,\sigma}(\R^n_+).
$
Moreover, if $\partial \Omega $ is smooth then $S_{n,\sigma}(\om)$ is achieved.
\end{itemize}
The discrepancy between the $S_{n,\sigma}(\Omega)$ problem and the $\widetilde S_{n,\sigma}(\Omega)$ problem can be explained as a Br\'ezis-Nirenberg \cite{BrNi}  effect:
\begin{align*}
I_{n,\sigma,\Omega}[u] &= I_{n,\sigma,\R^n}[u] - 2\int_{\Omega}u^2(x)\,\ud x\int_{\R^n\setminus\Omega}\frac{1}{|x-y|^{n+2\sigma}} \,\ud y\\
& \approx I_{n,\sigma,\R^n}[u] - c_{n,\sigma}\int_{\Omega}\frac{u^2(x)}{\mbox{dist}(x,\partial\Omega)^{2\sigma}}\,\ud x\quad\forall\,u\in C_c^\infty(\Omega).
\end{align*}
Therefore, the $S_{n,\sigma}(\Omega)$ problem is the $\widetilde S_{n,\sigma}(\Omega)$ problem with an additional negative term, and  it is this term that for $n\ge4\sigma$ lowers the value of the infimum and produces a minimizer. This fact was first observed by  Br\'ezis-Nirenberg \cite{BrNi} in the Laplacian setting.

In probability, $I_{n,\sigma,\Omega}$ is called the Dirichlet form of the censored $2\sigma$-stable process \cite{BBC} in $\Omega$. Its generator 
\begin{equation}\label{eq:regional}
(-\Delta)_{\Omega}^{\sigma}u:=2 \lim_{\va\to 0} \int_{\{y\in\Omega:\ |y-x|\ge \va\}} \frac{u(x)-u(y)}{|x-y|^{n+2\sigma}}\,\ud y
\end{equation}
is usually called the regional fractional Laplacian operator \cite{Guan, GuanMa}. Therefore, the difference between $S_{n,\sigma}(\Omega)$ and $\widetilde S_{n,\sigma}(\Omega)$ is also related to the difference between this regional fractional Laplacian and the ``full'' fractional Laplacian on $\R^n$, and in turn by the nonlocal Hardy-type term's dependence on $\Omega$.

The nontrivial dependence of $S_{n, \sigma}(\Omega)$ on $\Omega$ leads to interesting and natural questions of a shape-optimization nature. For example: it was asked in \cite{FJX} that
\begin{ques}
	Assume $n \geq 2$ and $\sigma > \frac{1}{2}$. What are all the open sets $\Omega \subset \R^n$ with $S_{n, \sigma}(\Omega) = \underline{S}_{n, \sigma}$, where, as mentioned before, $\underline{S}_{n, \sigma}=\inf\{S_{n,\sigma}(V): V\subset\R^n\mbox{ is an open set}\}>0$?
\end{ques}
As far as we are aware, it is unknown whether there are any such sets. A tempting conjecture would be that all such sets are balls, but the nature of the underlying equations does not support the arguments usually used to prove such symmetry results (symmetrization, moving planes; see the discussion below).

We do not attack this question here: there are several difficulties in applying calculus of variations, shape optimization, and free boundary techniques to it, and we wish to first focus on the challenges related to the nonlocality of the equation and the subtle dependence of $I_{n, \sigma, \Omega}$ on $\Omega$ in isolation. To do so, we formulate an optimization problem with similar characteristics but which avoids the criticality and extra scale invariance associated with the optimal Sobolev exponent $\frac{2n}{n - 2 \sigma}$.

Let $n\ge 2$, $\Omega\subset\R^n$ be a bounded open set,  $\sigma\in (\frac 12, 1)$ and $\mathring H^\sigma(\Omega)$ be the completion of $C^1_c(\Omega)$ with respect to the quadratic form $I_{n,\sigma,\Omega}$. Then we have the compact fractional Sobolev embedding $\mathring H^\sigma(\Omega)\hookrightarrow L^2(\Omega)$. Therefore, the eigenvalues defined as 
\begin{equation}\label{eq:eigen}
\begin{cases}
(-\Delta)_\Omega^{\sigma}u=\lambda u\quad\mbox{in }\Omega\\
u=0\quad\mbox{on }\partial\Omega
\end{cases}
\end{equation}
consist a sequence that can be ordered (counting the multiplicities) as
\[
0<\lda_{1,\sigma}(\Omega)\le \lda_{2,\sigma}(\Omega)\le\cdots\le\lda_{k,\sigma}(\Omega)\le\cdots\to\infty.
\]
Moreover, we have for the first eigenvalue that
\[
\lda_{1,\sigma}(\Omega)=\min\{I_{n,\sigma,\Omega}[u]: u\in \mathring H^\sigma(\Omega), \|u\|_{L^2(\Omega)}=1\}.
\]
Let $u\in \mathring H^\sigma(\Omega)$ be an eigenfunction for $\lda_{1,\sigma}(\Omega)$. Then by the fractional Sobolev inequality and H\"older inequality, we have
\[
\lda_{1,\sigma}(\Omega)\|u\|^2_{L^2(\Omega)}=I_{n,\sigma,\Omega}[u]\ge S_{n,\sigma}(\Omega)\|u\|^2_{L^{\frac{2n}{n-2\sigma}}(\Omega)}\ge S_{n,\sigma}(\Omega) |\Omega|^{-\frac{2\sigma}{n}}\|u\|^2_{L^2(\Omega)}.
\]
Therefore,
\[
\lda_{1,\sigma}(\Omega)\ge S_{n,\sigma}(\Omega) |\Omega|^{-\frac{2\sigma}{n}}\ge \underline S_{n,\sigma} \cdot |\Omega|^{-\frac{2\sigma}{n}}.
\]

Inspired by the classical Rayleigh-Faber-Krahn inequality \cite{Faber, Krahn, Rayleigh}, we would like to study the variational problem
\begin{equation}\label{eq:fractionalRFK}
\inf\{\lda_{1,\sigma}(\Omega): \ \Omega\subset\R^n \mbox{ a bounded open set such that } |\Omega|=|B_1|\},
\end{equation}
where $|\Omega|$ is the Lebesgue measure of $\Omega$.

One notable difficulty of this variational problem is that radial symmetrization rearrangement may \emph{not} work. Recall that for $u\in C^1_c(\Omega)$, the P\'olya-Szeg\"o inequality \cite{PS} states that 
\begin{equation}\label{eq:PS}
\int_{\Omega}|\nabla u|^p\ge \int_{\Omega^*}|\nabla u^*|^p
\end{equation}
for $1\le p<\infty$, where $\Omega^*$ is the symmetric rearrangement of the set $\Omega$ and $u^*$ is the symmetric decreasing rearrangement of the function $u$ (for the definitions of rearrangements we refer to the book of Lieb-Loss \cite{LiebLoss}). Meanwhile, P\'olya-Szeg\"o type inequality also holds for $I_{n,\sigma,\R^n}[u]$. Namely, for $0<\sigma<1$ and $1\le p<\infty$, it was shown in Theorem 9.2 in Almgren-Lieb \cite{AL} that for every $u\in C^1_c(\R^n)$
\begin{equation}\label{eq:PS2}
{\iint_{\R^n\times \R^n} \frac{|u(x)-u(y)|^p}{|x-y|^{n+\sigma p}}\,\ud x\ud y}\ge {\iint_{\R^n\times \R^n} \frac{|u^*(x)-u^*(y)|^p}{|x-y|^{n+\sigma p}}\,\ud x\ud y}. 
\end{equation}
However, in the interesting paper \cite{LW}, it was showed that there exists a nonnegative $u\in C^\infty_c(B_1)$ such that
\begin{equation}\label{eq:sym}
\iint_{B_1 \times B_1} \frac{|u(x)-u(y)|^p}{|x-y|^{n+\sigma p}}\,\ud x\ud y
< \iint_{B_1\times B_1} \frac{|u^*(x)-u^*(y)|^p}{|x-y|^{n+\sigma p}}\,\ud x\ud y.
\end{equation}
This failure of P\'olya-Szeg\"o type inequality for $I_{n,\sigma,\Omega}[u]$ is another notable difference between the two norms $I_{n,\sigma,\R^n}[u]$ and $I_{n,\sigma,\Omega}[u]$ on the set $C_c^\infty(\Omega)$.

If one considers the first eigenvalue of the classical fractional Laplacian
\begin{equation}\label{eq:eigen2}
\begin{cases}
(-\Delta)^{\sigma}u=\widetilde\lambda_{1,\sigma}(\Omega) u\quad\mbox{in }\Omega\\
u=0\quad\mbox{on }\R^n\setminus\Omega,
\end{cases}
\end{equation}
then, using the P\'olya-Szeg\"o type inequality \eqref{eq:PS2}  for $I_{n,\sigma,\R^n}[u]$, we know that $\widetilde\lambda_{1,\sigma}(\Omega)\ge \widetilde\lambda_{1,\sigma}(B_1)$ for all open set $\Omega$ of the same measure as $B_1$, see Sire-V\'azquez-Volzone \cite{SVV}. 

To remove the measure constraint in \eqref{eq:fractionalRFK}, one can consider the following equivalent problem (up to scaling): 
\begin{equation}\label{prob:variational}
\inf\{\lda_{1,\sigma}(\Omega)+|\Omega|: \ \Omega\subset\R^n \mbox{ is a bounded open set}\}.
\end{equation}
As we will show later in Proposition \ref{prop:equivalentproblems}, 
\begin{align*}
&\inf\{\lda_{1,\sigma}(\Omega)+|\Omega|: \ \Omega\subset\R^n \mbox{ is a bounded open set}\}\\
&= \inf\left\{\frac{I_{n,\sigma,\{u>0\}}[u] }{\|u\|^2_{L^2(\R^n)}}+|\{u>0\}|: u\in \mathring H^\sigma(\R^n),\ \ u\not\equiv 0,\ u\ge 0\mbox{ in }\R^n\right\}.
\end{align*}

In this paper, we prove the following existence result:
\begin{thm}\label{thm:existenceg}
Let $n\ge 2$ and $\sigma\in (\frac 12,1)$. There exists $u_0\in \mathring H^\sigma(\R^n)$, $u_0\not\equiv 0$, $u_0\ge 0\mbox{ in }\R^n$ such that $\{u_0>0\}$ is bounded, and 
\begin{equation}\label{eq:quantity}
\inf\left\{\frac{I_{n,\sigma,\{u>0\}}[u] }{\|u\|^2_{L^2(\R^n)}}+|\{u>0\}|: u\in \mathring H^\sigma(\R^n),\ \ u\not\equiv 0,\ u\ge 0\mbox{ in }\R^n\right\}
\end{equation}
is achieved by $u_0$.
\end{thm}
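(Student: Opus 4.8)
The plan is to apply the direct method of the calculus of variations to the functional in \eqref{eq:quantity}, using a priori diameter estimates to overcome the lack of compactness coming from translations. Set
\[
m := \inf\left\{\frac{I_{n,\sigma,\{u>0\}}[u]}{\|u\|_{L^2(\R^n)}^2} + |\{u>0\}| : u\in\mathring H^\sigma(\R^n),\ u\not\equiv 0,\ u\ge 0\right\}.
\]
First I would record that $m$ is finite and positive: finiteness follows by plugging in any fixed test function (say a bump supported in $B_1$), and positivity follows from the chain of inequalities already displayed in the introduction, $\lambda_{1,\sigma}(\Omega)\ge\underline S_{n,\sigma}|\Omega|^{-2\sigma/n}$, which shows $\lambda_{1,\sigma}(\{u>0\})/\|u\|_{L^2}^2 + |\{u>0\}|\ge \underline S_{n,\sigma}|\{u>0\}|^{-2\sigma/n} + |\{u>0\}|$, and the right-hand side is bounded below by a positive constant over all $t=|\{u>0\}|>0$. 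One may also normalize: since the quotient $I_{n,\sigma,\{u>0\}}[u]/\|u\|_{L^2}^2$ is $0$-homogeneous in $u$, we may assume along a minimizing sequence $\{u_k\}$ that $\|u_k\|_{L^2(\R^n)}=1$, so we must minimize $I_{n,\sigma,\{u_k>0\}}[u_k] + |\{u_k>0\}|$.

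The core of the argument is the a priori diameter estimate: I claim that any (sufficiently good) minimizing sequence can be replaced by one for which the supports $\Omega_k := \{u_k>0\}$ have uniformly bounded diameter. The heuristic is that if $\Omega_k$ were spread out into two or more far-apart pieces, one could discard all but the ``best'' piece — the one carrying the largest share of the $L^2$ mass relative to its energy cost — strictly decreasing (or at least not increasing in the limit) the functional, contradicting minimality; here the key point is that $I_{n,\sigma,\Omega}$ is, unlike the full-space form, \emph{not} increased by such surgery because there is no long-range interaction term to account for. Concretely I would: (i) show that for a minimizing sequence one may assume $\Omega_k$ is (essentially) connected or at least has bounded ``spread'' by a cut-and-keep argument; (ii) use that $u_k$ is (up to passing to the first eigenfunction on $\Omega_k$, which only decreases the functional) the first eigenfunction, hence one can invoke quantitative integrability/decay estimates — or a direct concentration-compactness dichotomy on $u_k$ itself — to rule out mass escaping to infinity or splitting. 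I expect that the cleanest route is a Lions-type concentration-compactness argument applied to the measures $u_k^2\,\ud x$, where the ``dichotomy'' alternative is excluded precisely by the superadditivity failure just described, and ``vanishing'' is excluded by the normalization $\|u_k\|_{L^2}=1$ together with the energy bound.

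Once the supports are confined to a fixed ball $B_R$ after translation, compactness is standard: the energy bound $I_{n,\sigma,B_R}[u_k]\le C$ together with $\|u_k\|_{L^2}=1$ gives, via the compact embedding $\mathring H^\sigma(B_R)\hookrightarrow L^2(B_R)$ (valid since $\sigma>1/2$, as used earlier), a subsequence converging weakly in $\mathring H^\sigma(B_R)$ and strongly in $L^2$ to some $u_0\ge0$ with $\|u_0\|_{L^2}=1$, hence $u_0\not\equiv0$. Weak lower semicontinuity of the (convex) quadratic form gives $I_{n,\sigma,B_R}[u_0]\le\liminf I_{n,\sigma,B_R}[u_k]$. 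The only remaining subtlety is the measure term: one needs $|\{u_0>0\}|\le\liminf|\{u_k>0\}|$ and, more delicately, that $I_{n,\sigma,\{u_0>0\}}[u_0]\le I_{n,\sigma,B_R}[u_0]$ can actually be compared to the limit of $I_{n,\sigma,\Omega_k}[u_k]$ — i.e., that restricting the domain of integration from $B_R$ to the genuine positivity set $\{u_0>0\}$ does not lose too much. Since $u_0=0$ a.e. on $B_R\setminus\{u_0>0\}$, one has $I_{n,\sigma,\{u_0>0\}}[u_0]\le I_{n,\sigma,B_R}[u_0]$, and combined with $|\{u_0>0\}|\le\liminf|\Omega_k|$ (lower semicontinuity of measure under a.e. convergence) this yields
\[
\frac{I_{n,\sigma,\{u_0>0\}}[u_0]}{\|u_0\|_{L^2}^2} + |\{u_0>0\}| \le \liminf_k\Big(I_{n,\sigma,\Omega_k}[u_k] + |\Omega_k|\Big) = m,
\]
so $u_0$ is a minimizer with bounded support. \textbf{The main obstacle} is unquestionably the diameter estimate in the second paragraph: absent symmetrization, one has no direct handle on the shape of $\Omega_k$, and one must extract geometric control purely from the interplay between the eigenvalue term and the volume penalty, making careful use of the fact that the regional form carries no long-range coupling that would penalize breaking the support apart.
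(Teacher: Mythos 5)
Your overall strategy (direct method plus an a priori diameter estimate, ruling out splitting by ``cut and keep'' using the fact that the regional form has no long-range coupling) matches the paper's in spirit, and your heuristic that one can discard all but one ``piece'' of a spread-out $\Omega_k$ is indeed what Lemma \ref{lem:boundeddiameter} formalizes. But there is a genuine gap, and you have essentially flagged it yourself: everything rests on a quantitative estimate you do not supply. Concentration-compactness on the measures $u_k^2\,\ud x$ controls where the $L^2$ \emph{mass} goes; it says nothing about the Lebesgue measure set $\{u_k>0\}$, which could have long thin tails or many small far-flung components carrying negligible $L^2$ mass but nontrivial Lebesgue measure and energy. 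Ruling these out is exactly the hard part of the diameter bound, and it cannot be extracted from a vanishing/dichotomy/compactness trichotomy alone. The paper's route is different and nontrivial: a non-uniform growth estimate (Lemma \ref{lem:lowerbound}) obtained by a multi-scale iteration with carefully balanced local and nonlocal error terms (via the free parameter $\va$ in Lemma \ref{lem:upperbound}), yielding the uniform lower bound $\int_{B_2(x)}u^2\ge c$ at every Lebesgue point of $\{u>0\}$ (Corollary \ref{cor:L2below}). Only with that in hand does a Besicovitch covering bound the number of pieces, after which a rescaling argument (with the specific choice $R_i = (|\{u_i>0\}|/|\{u>0\}|)^{1/n}<1$ so that the eigenvalue term strictly improves) selects a single piece with no worse energy; this rescaling is also not something your version addresses, and a naive ``keep the best piece as is'' does not obviously decrease the functional since it changes both the $L^2$ normalization and the volume penalty simultaneously.

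There is also a secondary error in your limiting step. Once $\Omega_k\subset B_R$, you invoke weak lower semicontinuity to get $I_{n,\sigma,B_R}[u_0]\le\liminf I_{n,\sigma,B_R}[u_k]$ and then try to conclude with $I_{n,\sigma,\Omega_k}[u_k]$ on the right. But $\Omega_k\subset B_R$ gives $I_{n,\sigma,\Omega_k}[u_k]\le I_{n,\sigma,B_R}[u_k]$, which is the \emph{wrong} direction for your chain. The correct argument (as in Lemma \ref{lem:equivachieved}) passes to a pointwise-a.e.\ convergent subsequence and applies Fatou directly to the integrand over $\{u_k>0\}\times\{u_k>0\}$, using that $\mathbf{1}_{\{u_k>0\}}(x)\mathbf{1}_{\{u_k>0\}}(y)\to 1$ a.e.\ on $\{u_0>0\}\times\{u_0>0\}$; it does not go through the enlarged domain $B_R\times B_R$ at all.
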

The key difficulty in proving this is to obtain an \emph{a priori} bound on the diameter of $\{u_0 > 0\}$. In the shape optimization and free boundary literature, such bounds are common and are closely tied with lower bounds on the growth of $u_0$ from the boundary of $\partial \{u_0 > 0 \}$, or perhaps of auxiliary functions related to $u_0$: the idea is that if $u_0$ grows at a prescribed rate from $\partial \{u_0 > 0 \}$, its support cannot have long, thin necks or many small connected components. Such growth estimates are usually obtained by using sets like $\{u_0 > 0\} \setminus B_R(x)$ as competitors (with appropriately chosen functions). We proceed along these lines here as well, but carrying out the argument requires various non-standard modifications, some decidedly nonlocal multi-scale iteration procedures, and in the end does not result in ``uniform'' growth estimates on $u_0$.

For related reasons, we are unable to show much more than stated in Theorem \ref{thm:existenceg} concerning the nature of the minimizing $u_0$ (though see Lemma \ref{lem:upperbound}, which shows $u_0$ is bounded). Of particular interest is this question:
\begin{ques}\label{ques:continuity}
Is \eqref{eq:quantity} achieved by a continuous function $u_0\in \mathring H^\sigma(\R^n)$?
\end{ques}
An affirmative answer would imply that problems \eqref{eq:quantity} and \eqref{prob:variational} have identical minimizers (in the sense that the support of a minimizer of \eqref{eq:quantity} is a minimizer of \eqref{prob:variational}, and vice versa). The difficulty of proving this continuity is in estimating the energy difference between $u_0$ and its competitor whose support is slightly enlarged. See Section \ref{sec:discussion} for further discussion and additional open problems.

If one restricts the problem \eqref{eq:fractionalRFK} to the class of convex sets, then we have
\begin{thm}\label{thm:convex}
Let $n\ge 2$ and $\sigma\in (\frac 12,1)$.  There exists a convex open set that achieves 
\[
\inf\{\lda_{1,\sigma}(\Omega): \Omega\subset\R^n \mbox{ is a convex open set  and } |\Omega|=1\}.
\]
\end{thm}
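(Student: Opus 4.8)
The plan is to run the direct method within the class of convex open sets of unit volume. Write $m$ for the infimum in question; it is finite (the unit‑volume ball is a competitor) and positive, since $\lda_{1,\sigma}(\Omega)\ge\underline S_{n,\sigma}\,|\Omega|^{-2\sigma/n}=\underline S_{n,\sigma}>0$ for $n\ge2$, $\sigma>\tfrac12$. Fix convex open sets $\Omega_k$ with $|\Omega_k|=1$ and $\lda_{1,\sigma}(\Omega_k)\to m$. The argument rests on three points: (i) an \emph{a priori} bound $\sup_k\operatorname{diam}(\Omega_k)<\infty$; (ii) compactness of a translated minimizing sequence in the Hausdorff distance, with a genuine convex body as limit; and (iii) lower semicontinuity of $\lda_{1,\sigma}$ along that limit.

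For (i), the point is that thin convex sets have large first eigenvalue. By John's ellipsoid theorem, a convex body $\Omega$ with $|\Omega|=1$ contains an ellipsoid $E$ with $\Omega\subset nE$; writing $a_1\ge\cdots\ge a_n$ for the semi-axes of $E$, we have $\operatorname{diam}(\Omega)\le2na_1$ and $\omega_n\prod_i a_i=|E|\le1$, so the shortest semi-axis satisfies $a_n\le\bigl(\prod_i a_i/a_1\bigr)^{1/(n-1)}\le\bigl(2n/(\omega_n\operatorname{diam}\Omega)\bigr)^{1/(n-1)}$; hence, after a rotation and translation, $\Omega\subset nE$ lies in a slab $\{0<x_n<w\}$ with $w=w(\Omega)\le2na_n\to0$ as $\operatorname{diam}(\Omega)\to\infty$. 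It therefore suffices to prove a fractional Poincaré inequality with constant \emph{independent} of the domain: there is $c=c(n,\sigma)>0$ with $\|u\|_{L^2(\R^n)}^2\le c\,w^{2\sigma}\,I_{n,\sigma,\Omega}[u]$ for every $u\in\mathring H^\sigma(\Omega)$ whenever $\Omega$ lies in a slab of width $w$. To get it, slice in the $x_n$-direction: for a.e.\ $x'\in\R^{n-1}$ the fiber of $\Omega$ is an interval of length $\le w$, on which the (scaling-correct) one-dimensional fractional Poincaré inequality $\|v\|_{L^2(I)}^2\le C(\sigma)|I|^{2\sigma}\iint_{I\times I}|v(s)-v(t)|^2|s-t|^{-1-2\sigma}\,\ud s\,\ud t$ applies; integrating over $x'$ gives $\|u\|_{L^2}^2\lesssim w^{2\sigma}[u]_n^2$, where $[u]_n^2=\int_{\R^{n-1}}\iint_{\R\times\R}|u(x',s)-u(x',t)|^2|s-t|^{-1-2\sigma}\,\ud s\,\ud t\,\ud x'$ is the Gagliardo seminorm in the $x_n$-direction. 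Now $[u]_n^2\lesssim I_{n,\sigma,\R^n}[u]$ (this is $|\xi_n|^{2\sigma}\le|\xi|^{2\sigma}$ on the Fourier side), while for $\sigma>\tfrac12$ one has $I_{n,\sigma,\R^n}[u]\lesssim I_{n,\sigma,\Omega}[u]$ \emph{uniformly} over all open $\Omega$: indeed
\[
I_{n,\sigma,\R^n}[u]=I_{n,\sigma,\Omega}[u]+2\int_\Omega u^2(x)\int_{\R^n\setminus\Omega}\frac{\ud y}{|x-y|^{n+2\sigma}}\,\ud x\le I_{n,\sigma,\Omega}[u]+C_{n,\sigma}\int_\Omega\frac{u^2(x)}{\dist(x,\pa\Omega)^{2\sigma}}\,\ud x,
\]
and the last integral is dominated by $I_{n,\sigma,\Omega}[u]$ with a uniform constant by the regional fractional Hardy inequality of Dyda--Frank \cite{DyFr}. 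Combining these yields the uniform Poincaré inequality, hence $\lda_{1,\sigma}(\Omega)\ge c^{-1}w(\Omega)^{-2\sigma}$, and therefore $\sup_k\operatorname{diam}(\Omega_k)<\infty$ (else $\lda_{1,\sigma}(\Omega_k)\to\infty\ne m$). After translating, we may assume $0\in\Omega_k\subset B_{R_0}$ for a fixed $R_0$.

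For (ii), the Blaschke selection theorem gives a subsequence with $\overline{\Omega_k}\to C$ in the Hausdorff distance, $C$ compact convex; since Lebesgue measure is continuous along Hausdorff-convergent sequences of convex bodies contained in a fixed ball, $|C|=\lim_k|\overline{\Omega_k}|=1$, so $C$ has nonempty interior. Put $\Omega_*:=\operatorname{int}(C)$, a convex open set with $|\Omega_*|=1$ — the candidate minimizer. For (iii), let $u_k\in\mathring H^\sigma(\Omega_k)$ be a first eigenfunction with $\|u_k\|_{L^2}=1$, extended by $0$ to $\R^n$. By the uniform comparison above, $I_{n,\sigma,\R^n}[u_k]\lesssim I_{n,\sigma,\Omega_k}[u_k]=\lda_{1,\sigma}(\Omega_k)$ stays bounded, so $\{u_k\}$ is bounded in $H^\sigma(\R^n)$; along a further subsequence $u_k\rightharpoonup u_*$ weakly in $H^\sigma(\R^n)$ and strongly in $L^2$ (all supports lie in $B_{R_0}$), whence $\|u_*\|_{L^2}=1$. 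Hausdorff convergence forces $u_k\equiv0$ on every compact subset of $\R^n\setminus C$ for $k$ large, so $u_*=0$ a.e.\ on $\R^n\setminus\Omega_*$; as $\Omega_*$ is convex (hence Lipschitz) and $\sigma>\tfrac12$, this places $u_*\in\mathring H^\sigma(\Omega_*)$ (alternatively, approximate $u_*$ directly by dilating it about an interior point of $\Omega_*$ and mollifying). Finally, for each $\delta>0$ the set $\Omega_*^\delta:=\{x\in\Omega_*:\dist(x,\pa\Omega_*)>\delta\}$ satisfies $\Omega_*^\delta\subset\Omega_k$ for $k$ large (Hausdorff convergence of convex sets, via support functions), so
\[
\lda_{1,\sigma}(\Omega_k)=I_{n,\sigma,\Omega_k}[u_k]\ \ge\ \iint_{\Omega_*^\delta\times\Omega_*^\delta}\frac{|u_k(x)-u_k(y)|^2}{|x-y|^{n+2\sigma}}\,\ud x\,\ud y\,;
\]
the right-hand side is a continuous convex — hence weakly lower semicontinuous — functional of $u_k|_{\Omega_*^\delta}\in H^\sigma(\Omega_*^\delta)$, so letting $k\to\infty$ and then $\delta\to0$ (monotone convergence) gives $m=\lim_k\lda_{1,\sigma}(\Omega_k)\ge I_{n,\sigma,\Omega_*}[u_*]\ge\lda_{1,\sigma}(\Omega_*)\|u_*\|_{L^2}^2=\lda_{1,\sigma}(\Omega_*)$. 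Since $\Omega_*$ is admissible, $\lda_{1,\sigma}(\Omega_*)=m$, proving the theorem.

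I expect step (i) — precisely, the fractional Poincaré inequality with a constant \emph{uniform} over the degenerating convex domains, which in turn rests on the uniform regional fractional Hardy inequality — to be the main obstacle: this is exactly where the nonlocality and the delicate dependence of $I_{n,\sigma,\Omega}$ on $\Omega$ enter, and where $\sigma>\tfrac12$ is essential (for $\sigma<\tfrac12$ the comparison $I_{n,\sigma,\R^n}\lesssim I_{n,\sigma,\Omega}$ fails and, as recalled in the introduction, one may even have $S_{n,\sigma}(\Omega)=0$). The remaining compactness and lower-semicontinuity steps are comparatively routine once the diameter is controlled.
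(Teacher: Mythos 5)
Your proof is correct and follows essentially the same route as the paper's: both hinge on the uniform comparison $I_{n,\sigma,\R^n}[u]\lesssim I_{n,\sigma,\Omega}[u]$ (the paper's Lemma \ref{lem:equivalentnorms}, proved via the Loss--Sloane Hardy inequality; you invoke Dyda--Frank, which also works), a one-dimensional Poincar\'e/Fourier argument showing the eigenvalue blows up on thin slabs, John's ellipsoid to control the geometry of a degenerating minimizing sequence, and then Blaschke selection together with weak lower semicontinuity of the Gagliardo energy. The only cosmetic differences are that the paper states the degenerate alternative via Lin's John-lemma dichotomy and passes through the full-Laplacian eigenvalue $\widetilde\lambda_{1,\sigma}$ before invoking Fatou, whereas you extract the slab directly from the John ellipsoid and handle lower semicontinuity via exhaustion by inner sets $\Omega_*^\delta$ (and you also make explicit the step $u_*\in\mathring H^\sigma(\Omega_*)$, which the paper leaves implicit).
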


This paper is organized as follows. In Section \ref{sec:inequality}, we recall some inequalities that are needed here. In Section \ref{sec:convex}, we prove the existence of minimizers for the convex case: this is much simpler, but illustrates the concepts in play. In Section \ref{sec:functionformula}, we reformulate the variational problem \eqref{prob:variational} as \eqref{eq:quantity}. In Section \ref{sec:upperbound}, we show a local pointwise upper bound on the minimizer.  In Section \ref{sec:lowerbound}, we show a lower bound of the minimizer, use it to estimate the diameter of the support of $u_0$, and conclude the proof of Theorem \ref{thm:existenceg}. In the last section, we discuss Open Question \ref{ques:continuity} and related topics.

\bigskip

\noindent \textbf{Acknowledgements:} Part of this work was completed while the second and third named authors were visiting the Hong Kong University of Science and Technology, to which they are grateful for providing  very stimulating research environments and supports.

\section{Some inequalities}\label{sec:inequality}
Let us first recall a sharp Hardy inequality for fractional integrals on general domains. Let $\Omega\subsetneq\R^n $ be an open set. For a direction $\omega\in\mathbb{S}^{n-1}$, we define
\[
d_{\omega,\Omega}(x)=\inf\{|t|: x+t\omega\not\in\Omega\}
\]
and for $\alpha>1$ that
\[
m_{\alpha}(x)=\left(\frac{2\pi^{\frac{n-1}{2}} \Gamma(\frac{1+\alpha}{2})}{\Gamma(\frac{N+\alpha}{2})}\right)^{\frac{1}{\alpha}} \left(\int_{\mathbb{S}^{n-1}}\frac{1}{d_{\omega,\Omega}(x)^\alpha}\,\ud \omega\right)^{-\frac{1}{\alpha}}.
\]
\begin{thm}[Theorem 1.2 in Loss-Sloane \cite{LS}]\label{thm:fractionalHardy}
Let $\frac 12<\sigma<\frac p2<\infty$ and $n\ge 1$. Then for any $u\in C^\infty_c(\Omega)$, 
\[
\iint_{\Omega\times\Omega} \frac{(u(x)-u(y))^p}{|x-y|^{n+2\sigma}} \,\ud x\,\ud y\ge C_{n,p,\sigma}\int_{\Omega}\frac{|u(x)|^p}{m_{2\sigma}(x)^{2\sigma}}\,\ud x,
\]
where $C_{n,p,\sigma}=2\pi^{\frac{n-1}{2}} \frac{\Gamma(\frac{1+2\sigma}{2})}{\Gamma(\frac{N+2\sigma}{2})}\int_0^1\frac{|1-r^{\frac{2\sigma-1}{p}}|^p}{(1-r)^{1+2\sigma}}\,\ud r$ is sharp. 
\end{thm}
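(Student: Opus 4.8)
\emph{Step 1: reduction to dimension one.} The plan is to average over the direction of the segment $[x,y]$ and thereby reduce the $n$-dimensional inequality to a one-dimensional fractional Hardy inequality on intervals. Writing $y=x+r\w$ with $r>0$, $\w\in\mathbb{S}^{n-1}$ in polar coordinates about $x$ — so that the Jacobian $r^{n-1}$ cancels all but one power of $r$ against $|x-y|^{n+2\sigma}$ — and then slicing $\R^n$ along the lines parallel to $\w$, one obtains the exact identity
\[
\iint_{\Omega\times\Omega}\frac{|u(x)-u(y)|^p}{|x-y|^{n+2\sigma}}\,\ud x\,\ud y=\frac12\int_{\mathbb{S}^{n-1}}\int_{\w^\perp}\iint_{U_{\w,z}\times U_{\w,z}}\frac{|f_{\w,z}(s)-f_{\w,z}(t)|^p}{|s-t|^{1+2\sigma}}\,\ud s\,\ud t\,\ud z\,\ud\w,
\]
where $U_{\w,z}:=\{s\in\R:z+s\w\in\Omega\}\subset\R$ is open and $f_{\w,z}(s):=u(z+s\w)\in C^1_c(U_{\w,z})$. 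On the other hand, by the very definition of $m_{2\sigma}$ one has $m_{2\sigma}(x)^{-2\sigma}=\gamma_{n,\sigma}^{-1}\int_{\mathbb{S}^{n-1}}d_{\w,\Omega}(x)^{-2\sigma}\,\ud\w$ for a constant $\gamma_{n,\sigma}>0$ which is exactly the Gamma-factor appearing in $C_{n,p,\sigma}$, and $d_{\w,\Omega}(z+s\w)=\dist(s,\R\setminus U_{\w,z})$. Hence, after Fubini, $C_{n,p,\sigma}\int_\Omega |u|^p m_{2\sigma}^{-2\sigma}$ becomes $\tfrac12\,c_{1,p,\sigma}$ times the same triple average of $\int_{U_{\w,z}}|f_{\w,z}(s)|^p\,d_{U_{\w,z}}(s)^{-2\sigma}\,\ud s$, where $c_{1,p,\sigma}:=2\int_0^1\frac{|1-r^{(2\sigma-1)/p}|^p}{(1-r)^{1+2\sigma}}\,\ud r$. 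Replacing each $U_{\w,z}$ by the disjoint union of its connected components only discards nonnegative cross terms on the left, so the theorem reduces to the one-dimensional claim: for every open interval $I\subsetneq\R$ and $f\in C^1_c(I)$,
\[
\iint_{I\times I}\frac{|f(s)-f(t)|^p}{|s-t|^{1+2\sigma}}\,\ud s\,\ud t\ \ge\ c_{1,p,\sigma}\int_I\frac{|f(s)|^p}{d_I(s)^{2\sigma}}\,\ud s,\qquad d_I(s):=\dist(s,\R\setminus I).
\]

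\emph{Step 2: the one-dimensional inequality.} On the half-line $I=(0,\infty)$ take the ground state $\phi(s)=s^{\beta}$ with $\beta:=(2\sigma-1)/p>0$ (this is where $\sigma>\tfrac12$ enters), write $f=\phi g$, and apply a pointwise convexity inequality in the spirit of the nonlinear ground-state representation of Frank--Seiringer; integrating it gives
\[
\iint_{I\times I}\frac{|f(s)-f(t)|^p}{|s-t|^{1+2\sigma}}\,\ud s\,\ud t\ \ge\ \int_I\Lambda(s)\,|f(s)|^p\,\ud s+R[g],\qquad R[g]\ge0,
\]
where $\Lambda$ is fixed by requiring $\phi$ to be a formal solution of the associated nonlinear Euler--Lagrange equation. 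The scaling $t=sr$ yields $\Lambda(s)=s^{-2\sigma}\int_0^\infty\frac{|1-r^{\beta}|^p}{|1-r|^{1+2\sigma}}\,\ud r=c_{1,p,\sigma}\,s^{-2\sigma}$: the reflection $r\mapsto1/r$ converts $\int_0^\infty$ into $2\int_0^1$, and the resulting integral converges precisely because $\sigma<\tfrac p2$ (the integrand is $\sim(1-r)^{p-1-2\sigma}$ near $r=1$). Discarding $R[g]\ge0$ proves the half-line case with the stated constant. For a general interval, after a translation and dilation reduce to $I=(-1,1)$, where $d_I(s)=1-|s|$, split the weight at the two endpoints, and apply the half-line bound near each; the delicate point is that the constant is not lost, which should follow because $(1-|s|)^{\beta}$ satisfies the relevant Euler--Lagrange relation up to an error of favourable sign, so that the interaction between the two halves contributes nonnegatively.

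\emph{Step 3: sharpness, and the main obstacle.} Optimality of $C_{n,p,\sigma}$ reduces, via Step 1, to optimality of $c_{1,p,\sigma}$ on the half-line, shown by testing with truncated ground states $f_\va(s)=\eta_\va(s)s^{\beta}$, where $\eta_\va$ is a smooth cutoff equal to $1$ on $(\va,\va^{-1})$ and supported in $(\va/2,2\va^{-1})$: both sides diverge only logarithmically as $\va\to0$, so their ratio tends to $c_{1,p,\sigma}$; transplanting this to a slab in $\R^n$ (with $u$ depending on one coordinate) gives optimality in all dimensions. The principal obstacle is Step 2 for a \emph{bounded} interval with the \emph{same} constant as on the half-line: when $p=2$ the ground-state substitution is an exact algebraic identity and the potential $\Lambda$ is computed directly, but for $p\ne2$ one has only a pointwise convexity inequality with an implicit remainder $R[g]$, and one must carefully rule out any loss of constant when superposing the contributions of the two endpoints. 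By contrast, the averaging identity of Step 1 is a routine coarea computation, and the half-line bound is a direct application of the nonlinear ground-state representation once $\beta$ and $c_{1,p,\sigma}$ have been identified.
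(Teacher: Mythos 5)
The paper itself offers no proof of this statement: it is quoted as Theorem 1.2 of Loss--Sloane \cite{LS}, so the comparison can only be with the original argument there. Your outline follows the same broad strategy as \cite{LS} (directional slicing to reduce to dimension one, then a sharp one-dimensional inequality via a ground-state/convexity substitution with a power of the distance function), and your Step 1 reduction---including the angular normalization constant, the factor $\tfrac12$, and the reduction to connected components---is correct.

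The genuine gap is precisely at the point you yourself flag: the one-dimensional inequality on a \emph{bounded} interval with the undiminished half-line constant $c_{1,p,\sigma}$, which is the heart of the theorem, and the route you sketch would fail as stated. ``Splitting the weight at the two endpoints and applying the half-line bound near each'' cannot work: first, the half-line inequality applied to $f$ supported in $I=(-1,1)$ controls the larger form $\iint_{(-1,\infty)^2}$, which strictly dominates $\iint_{I\times I}$ (the discarded terms with $y>1$ contribute $\int_I|f(x)|^p\int_1^\infty|x-y|^{-1-2\sigma}\ud y\,\ud x>0$), so it yields no lower bound on the regional form over $I\times I$; second, even granting one-sided bounds with constant $c_{1,p,\sigma}$ for each endpoint weight, superposing them costs a factor $2$, since $d_I(s)^{-2\sigma}$ is only comparable to the sum of the two endpoint weights up to that factor. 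The correct mechanism---and the actual content of the one-dimensional lemma in \cite{LS}---is to run the convexity/ground-state substitution directly on $I$ with $\varphi(s)=\mathrm{dist}(s,I^c)^{\beta}$ and check pointwise that the resulting potential dominates the half-line potential $c_{1,p,\sigma}\,\mathrm{dist}(s,I^c)^{-2\sigma}$ (for $s$ in, say, the left half, the replacement of $(y-a)^\beta$ by $d_I(y)^\beta$ and the truncation of the domain at $b$ both produce errors of favourable sign); you assert this ``should follow'' but never establish it, and without it the theorem is not proved. A second, smaller gap: the potential generated by the nonlinear Euler--Lagrange relation for $\varphi(s)=s^\beta$ on the half-line is a principal-value integral of $\mathrm{sgn}(1-r^\beta)\,|1-r^\beta|^{p-1}|1-r|^{-1-2\sigma}$, not of $|1-r^\beta|^p$; identifying it with $2\int_0^1|1-r^\beta|^p(1-r)^{-1-2\sigma}\ud r$ (which is what makes the constant match the sharpness computation of your Step 3) requires a separate integral identity, proved in \cite{LS} and in Frank--Seiringer, which your write-up silently conflates.
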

Consequently, we can show that  $I_{n,\sigma,\R^n}[u]$ can be controlled by $I_{n,\sigma,\Omega}[u]$. 
\begin{lem}\label{lem:equivalentnorms}
Let $n\ge 2$ and $\sigma\in (1/2,1)$. There exists a constant $C=C(n,\sigma)>0$ such that for all open sets $\Omega\subset\R^n$ and all $u\in \mathring H^\sigma(\Omega)$,
\begin{equation}\label{eq:equivalent}
\iint_{\R^n\times\R^n} \frac{(u(x)-u(y))^2}{|x-y|^{n+2\sigma}} \,\ud x\ud y\le C \iint_{\Omega\times\Omega} \frac{(u(x)-u(y))^2}{|x-y|^{n+2\sigma}} \,\ud x\ud y.
\end{equation}
\end{lem}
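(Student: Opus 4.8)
The plan is to bound the ``missing'' part of the full energy, namely $2\iint_{\Omega\times(\R^n\setminus\Omega)}$, by the regional energy $I_{n,\sigma,\Omega}[u]$, and the only term that genuinely needs attention is the diagonal-type contribution coming from $y$ close to $\partial\Omega$. Writing out the decomposition for $u\in C^1_c(\Omega)$ (which suffices by density, since both sides are the squared norms defining $\mathring H^\sigma(\Omega)$ and $\mathring H^\sigma(\R^n)$ respectively, and the inequality then passes to the completion):
\[
I_{n,\sigma,\R^n}[u]=I_{n,\sigma,\Omega}[u]+2\int_{\Omega}u(x)^2\int_{\R^n\setminus\Omega}\frac{1}{|x-y|^{n+2\sigma}}\,\ud y\,\ud x.
\]
So it suffices to prove
\[
\int_{\Omega}u(x)^2\int_{\R^n\setminus\Omega}\frac{1}{|x-y|^{n+2\sigma}}\,\ud y\,\ud x\le C(n,\sigma)\,I_{n,\sigma,\Omega}[u].
\]

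First I would estimate the inner integral pointwise by a power of the distance to the boundary. For $x\in\Omega$ and any direction $\omega$, if $d_{\omega,\Omega}(x)$ is the distance from $x$ to $\Omega^c$ along $\omega$, then integrating over the ray $x+t\omega$, $t\ge d_{\omega,\Omega}(x)$, contributes at most a constant times $d_{\omega,\Omega}(x)^{-2\sigma}$; integrating over $\omega\in\mathbb S^{n-1}$ gives
\[
\int_{\R^n\setminus\Omega}\frac{\ud y}{|x-y|^{n+2\sigma}}\le \frac{c_{n,\sigma}}{2\sigma}\int_{\mathbb S^{n-1}}\frac{\ud\omega}{d_{\omega,\Omega}(x)^{2\sigma}}=c'_{n,\sigma}\,m_{2\sigma}(x)^{-2\sigma},
\]
where $m_{2\sigma}$ is exactly the quantity appearing in Theorem \ref{thm:fractionalHardy} (with $p=2$, $\alpha=2\sigma$). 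Hence the correction term is bounded by $c'_{n,\sigma}\int_\Omega u(x)^2 m_{2\sigma}(x)^{-2\sigma}\,\ud x$.

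Now I would invoke the Loss–Sloane Hardy inequality (Theorem \ref{thm:fractionalHardy}) with $p=2$, which is applicable precisely because the hypothesis $\tfrac12<\sigma<1$ gives $\tfrac12<\sigma<\tfrac p2=1$: this yields
\[
\int_{\Omega}\frac{u(x)^2}{m_{2\sigma}(x)^{2\sigma}}\,\ud x\le C_{n,2,\sigma}^{-1}\,I_{n,\sigma,\Omega}[u].
\]
Combining the two displays gives the claim with $C=c'_{n,\sigma}\,C_{n,2,\sigma}^{-1}$. I do not expect any serious obstacle: the one point requiring a little care is the spherical-integral bound for the inner integral and matching its constant to the definition of $m_{2\sigma}$ — one must use polar coordinates centered at $x$, note that along each ray the integrand $|x-y|^{-n-2\sigma}$ restricted to $\R^n\setminus\Omega$ is supported in $\{|t|\ge d_{\omega,\Omega}(x)\}$, and bound $\int_{d_{\omega,\Omega}(x)}^\infty t^{n-1}t^{-n-2\sigma}\,\ud t=\tfrac{1}{2\sigma}d_{\omega,\Omega}(x)^{-2\sigma}$; everything else is bookkeeping. (If one prefers to avoid tracking the sharp constant, one can use the cheaper inequality $\int_{\R^n\setminus\Omega}|x-y|^{-n-2\sigma}\,\ud y\le c_{n,\sigma}\,\dist(x,\partial\Omega)^{-2\sigma}$ together with the version of the fractional Hardy inequality with the Euclidean distance function, at the cost of a worse constant, since $m_{2\sigma}(x)\le c_n\,\dist(x,\partial\Omega)$.)
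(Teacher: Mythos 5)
Your proof is correct and follows essentially the same route as the paper: decompose $I_{n,\sigma,\R^n}[u]$ into $I_{n,\sigma,\Omega}[u]$ plus the cross-term $2\int_\Omega u^2\int_{\R^n\setminus\Omega}|x-y|^{-n-2\sigma}\ud y\,\ud x$, bound the inner integral pointwise by $C(n,\sigma)\,m_{2\sigma}(x)^{-2\sigma}$ via polar coordinates and the definition of $d_{\omega,\Omega}$, and then invoke the Loss--Sloane Hardy inequality (Theorem \ref{thm:fractionalHardy}) with $p=2$. One small caution on your parenthetical alternative: the inequality $m_{2\sigma}(x)\le c_n\,\dist(x,\partial\Omega)$ you quote is backwards --- since $d_{\omega,\Omega}(x)\ge\dist(x,\partial\Omega)$ for every $\omega$, one in fact has $m_{2\sigma}(x)\ge c_n\,\dist(x,\partial\Omega)$, and the Euclidean-distance Hardy inequality $\int_\Omega u^2\dist(\cdot,\partial\Omega)^{-2\sigma}\lesssim I_{n,\sigma,\Omega}[u]$ is a strictly stronger statement that fails for general open sets --- but your main argument never uses this and is unaffected.
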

\begin{proof}
By a density argument, we only need to show this for $u\in C_c^1(\Omega)$. Suppose $\Omega\subsetneq\R^n $.
We have
\begin{align}
&\iint_{\R^n\times \R^n} \frac{|u(x)-u(y)|^2}{|x-y|^{n+2\sigma }}\,\ud x\ud y \nonumber\\
&=\iint_{\Omega\times \Omega} \frac{|u(x)-u(y)|^2}{|x-y|^{n+ 2\sigma }}\,\ud x\ud y+2\int_{\Omega}u^2(x)\left(\int_{\R^n\setminus \Omega}\frac{1}{|x-y|^{n+2\sigma }}\,\ud y\right)\ud x. \label{eq:aux2}
\end{align}
Also, for $x\in\Omega$, we have
\begin{align*}
\int_{\R^n\setminus \Omega}\frac{1}{|x-y|^{n+2\sigma }}\,\ud y\le \int_{\mathbb{S}^{n-1}}\ud \omega \int_{d_{\omega,\Omega}(x)}^{\infty}\frac{1}{r^{1+2\sigma}}\,\ud r&=\int_{\mathbb{S}^{n-1}} \frac{1}{2\sigma d_{\omega,\Omega}(x)^{2\sigma}}\ud \omega=\frac{C(n,\sigma)}{(m_{2\sigma}(x))^{2\sigma }}
\end{align*}
for some constant $C(n,\sigma)$ depending only on $n,\sigma$, but \emph{not} on $\Omega$. Thus, we have
\begin{align}
\int_{\Omega}u^2(x)\left(\int_{\R^n\setminus \Omega}\frac{1}{|x-y|^{n+2\sigma}}\,\ud y\right)\ud x
&\le C(n,\sigma) \int_{\Omega}\frac{u^2(x)}{(m_{2\sigma}(x))^{2\sigma}} \,\ud x\nonumber\\
&\le C(n,\sigma) \iint_{\Omega\times \Omega} \frac{|u(x)-u(y)|^2}{|x-y|^{n+2 \sigma}}\,\ud x\ud y,\label{eq:auxsobolev}
\end{align}
where we used Theorem \ref{thm:fractionalHardy} in the last inequality. 
\end{proof}

We rewrite Lemma \ref{lem:equivalentnorms} into another form for convenience. 
\begin{lem}\label{lem:sobolevforus}
There exists $C=C(n,\sigma)$ such that
\[
\iint_{\R^n\times \R^n}\frac{(u(x)-u(y))^2}{|x-y|^{n+2\sigma}}\,\ud x\ud y
\le C \iint_{\{u>0\}\times \{u>0\}}\frac{(u(x)-u(y))^2}{|x-y|^{n+2\sigma}}\,\ud x\ud y
\]
for every nonnegative function $u\in \mathring H^\sigma(\R^n)$.
\end{lem}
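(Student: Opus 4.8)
The plan is to deduce Lemma \ref{lem:sobolevforus} directly from Lemma \ref{lem:equivalentnorms} by choosing the open set $\Omega$ appropriately. The natural candidate is $\Omega = \{u > 0\}$, but this set need not be open for a general $u \in \mathring H^\sigma(\R^n)$, and even if it is, one must check that $u$ restricted to it lies in $\mathring H^\sigma(\{u>0\})$, which is the completion of $C^1_c(\{u>0\})$; membership in $\mathring H^\sigma(\{u>0\})$ is not automatic from $u \in \mathring H^\sigma(\R^n)$ and $u \geq 0$. So the main obstacle is this measurability/approximation issue: passing from the clean but restrictive hypothesis of Lemma \ref{lem:equivalentnorms} (which is stated for $u \in C^1_c(\Omega)$ and extended by density) to the merely nonnegative Sobolev function here.

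I would handle this by first reducing to nonnegative $u \in C^1_c(\R^n)$ via a density argument: if $u_j \to u$ in $\mathring H^\sigma(\R^n)$ with $u_j \in C^1_c(\R^n)$, then replacing $u_j$ by $u_j^+$ (which still converges to $u^+ = u$ in $\mathring H^\sigma(\R^n)$, since truncation at $0$ is a bounded operation on the Gagliardo seminorm and does not increase it) we may assume $u_j \geq 0$; however $u_j^+$ is only Lipschitz, not $C^1$, so a further mollification or a routine approximation of $u_j^+$ by nonnegative $C^1_c$ functions is needed. Both the left-hand side and the right-hand side of the claimed inequality are (lower semi)continuous under $\mathring H^\sigma(\R^n)$ convergence in a suitable sense --- the subtlety is that the right-hand side integrates over $\{u>0\}\times\{u>0\}$, a set that moves with $u$, so one should be slightly careful, but since $u \geq 0$ one has $|u(x)-u(y)|^2 \mathbf{1}_{\{u>0\}\times\{u>0\}} \leq |u(x)-u(y)|^2$ pointwise and the reverse bound holds up to the contribution from $\{u>0\}\times\{u=0\}$, which is exactly the Hardy-type term controlled in the proof of Lemma \ref{lem:equivalentnorms}.

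Alternatively --- and this is cleaner --- I would simply mimic the proof of Lemma \ref{lem:equivalentnorms} verbatim with $\Omega$ replaced by the (measurable, not necessarily open) set $\{u>0\}$. The decomposition \eqref{eq:aux2} becomes
\[
\iint_{\R^n\times\R^n}\frac{|u(x)-u(y)|^2}{|x-y|^{n+2\sigma}}\,\ud x\ud y
=\iint_{\{u>0\}\times\{u>0\}}\frac{|u(x)-u(y)|^2}{|x-y|^{n+2\sigma}}\,\ud x\ud y
+2\int_{\{u>0\}}u^2(x)\int_{\{u=0\}}\frac{\ud y}{|x-y|^{n+2\sigma}}\,\ud x,
\]
using that $u(y)=0$ on $\{u=0\}$ to simplify the cross term. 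Then, for the inner integral, one bounds $\{u=0\} \subset \{u>0\}^c \cup (\text{null set})$ pointwise for a.e.\ $x$, so $\int_{\{u=0\}}|x-y|^{-n-2\sigma}\,\ud y \leq \int_{\{u>0\}^c}|x-y|^{-n-2\sigma}\,\ud y$, and applies the directional distance estimate and the fractional Hardy inequality of Theorem \ref{thm:fractionalHardy} exactly as in \eqref{eq:auxsobolev} --- noting that that inequality, after density, is available for all of $\mathring H^\sigma(\{u>0\})$, but in fact the chain \eqref{eq:aux2}--\eqref{eq:auxsobolev} only ever uses it through the already-proven estimate $\int_{\Omega}u^2/m_{2\sigma}^{2\sigma} \leq C \, I_{n,\sigma,\Omega}[u]$, and one can equally well cite Loss--Sloane directly with $\Omega$ a general open set and a cutoff/monotone-convergence argument for the $u$ at hand. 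The upshot is that the Hardy constant $C(n,\sigma)$ is genuinely independent of the set, so no openness of $\{u>0\}$ is needed and the inequality follows. I expect the bookkeeping around justifying Theorem \ref{thm:fractionalHardy} for the non-open set $\{u>0\}$ (via $\{u > \delta\}$ and letting $\delta \to 0$, or via the density reduction above) to be the only real point requiring care.
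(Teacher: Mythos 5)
Your plan (reduce to Lemma \ref{lem:equivalentnorms}, whose proof you would mimic with $\Omega$ replaced by $\{u>0\}$) is in the right general direction, and you correctly pinpoint the obstacle: $\{u>0\}$ need not be open, and $u$ need not belong to $\mathring H^\sigma(\{u>0\})$, so neither Lemma \ref{lem:equivalentnorms} nor the Loss--Sloane inequality applies directly. But the two concrete devices you offer to bridge the gap do not work, and this is where the paper's proof is genuinely different. Approximating by $\{u>\delta\}$ fails because that set is also not open, and more seriously because $u$ does \emph{not} vanish outside $\{u>\delta\}$: the identity \eqref{eq:aux2} relies essentially on $u$ vanishing on the complement, and with $\Omega=\{u>\delta\}$ the cross term is no longer $2\int u^2(x)\int_{\Omega^c}|x-y|^{-n-2\sigma}\,\ud y\,\ud x$ but carries extra off-diagonal and $\Omega^c\times\Omega^c$ contributions that you would still need to control. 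The density-in-$u$ route (Approach 1) has the moving-domain problem you yourself flag, and it does not resolve: for nonnegative $u_j\in C^1_c(\R^n)$ one has $\iint_{\{u_j>0\}^2}\le\iint_{\R^n\times\R^n}$ trivially, and there is no reason the right-hand sides $\iint_{\{u_j>0\}^2}[u_j]$ should converge (or be bounded above by) $\iint_{\{u>0\}^2}[u]$, since $\{u_j>0\}$ can be much larger than $\{u>0\}$.

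The paper's fix is both simpler and correct: normalize $\iint_{\{u>0\}^2}=1$, then use \emph{outer regularity of Lebesgue measure} to choose a \emph{decreasing} sequence of \emph{open} sets $\Omega^k$ with $\{u>0\}\subset\Omega^k$ and $|\Omega^k\setminus\{u>0\}|\to 0$. Since $u$ vanishes a.e.\ outside each $\Omega^k$, Lemma \ref{lem:equivalentnorms} applies on $\Omega^k$ and yields $\iint_{\R^n\times\R^n}\le C\iint_{\Omega^k\times\Omega^k}$ with $C$ independent of $k$; then monotone convergence (the indicators $\mathbf 1_{\Omega^k\times\Omega^k}$ decrease to $\mathbf 1_{A\times A}$ with $A\supset\{u>0\}$ and $|A\setminus\{u>0\}|=0$) gives $\iint_{\Omega^k\times\Omega^k}\to\iint_{\{u>0\}^2}=1$, and the lemma follows. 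The essential point you are missing is to approximate the positivity set \emph{from the outside} by open sets (so that $u$ still vanishes on the complement and the open-set lemma applies verbatim), rather than from the inside by $\{u>\delta\}$ or by approximating the function $u$ itself. With that substitution your argument would go through.
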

\begin{proof}
Without loss of generality, we assume that $|\R^n\setminus\{u>0\}|>0$. 
We also assume that $\iint_{\{u>0\}\times \{u>0\}}\frac{(u(x)-u(y))^2}{|x-y|^{n+2\sigma}}\,\ud x\ud y=1$. Using only the outer regularity of Lebesgue measure, we can find a sequence of open sets $\Omega_u^k\subset\R^n$ such that $\{u>0\}\subset\Omega_u^k \subset \Omega_u^{k-1}$ and $|\Omega_u^k\setminus\{u>0\}| \rightarrow 0$. Since $u\in \mathring H^\sigma(\R^n)$, the monotone convergence theorem gives that
\[
\lim_{k\to\infty} \iint_{\Omega_u^k\times\Omega_u^k }\frac{(u(x)-u(y))^2}{|x-y|^{n+2\sigma}}\,\ud x\ud y = \iint_{\{u > 0\}\times \{u > 0\} }\frac{(u(x)-u(y))^2}{|x-y|^{n+2\sigma}}\,\ud x\ud y = 1.
\]
By \eqref{eq:equivalent} applied to each $\Omega_u^k$, 
\[
\iint_{\R^n\times\R^n }\frac{(u(x)-u(y))^2}{|x-y|^{n+2\sigma}}\,\ud x\ud y\le C(n,\sigma) \inf_k \iint_{\Omega_u^k\times\Omega_u^k }\frac{(u(x)-u(y))^2}{|x-y|^{n+2\sigma}} = C(n, \sigma),
\]
giving the conclusion.
\end{proof}

\section{Convex domains}\label{sec:convex}

We first consider the special case of the minimizing problem among all convex sets.

\begin{proof}[Proof of Theorem \ref{thm:convex}]

We will adapt the proof in Lin \cite{Lin} for the Laplacian case. 

First, because of \eqref{eq:equivalent}, we have
\begin{equation}\label{eigenequi}
\widetilde\lambda_{1,\sigma}(\Omega)\le C \lambda_{1,\sigma}(\Omega),
\end{equation}
where $\widetilde\lambda_{1,\sigma}$ is defined in \eqref{eq:eigen2}.

Let $\{\Omega_k\}$ be a minimizing sequence. By John's Lemma, we have that either (A) $\Omega_k$ converges in the Hausdorff distance to a bounded convex  set  $\Omega_\infty$ with $|\Omega_\infty|=1$ (we may assume that $\Omega_\infty$ is open since the boundary of $\Omega_\infty$ is of measure zero), or (B) there is a subsequence, still denoted $\Omega_k$, such that $\Omega_k$ are contained in strips (after suitable rotations and translations, noting that $\lda_{1,\sigma}(\Omega)$ is invariant under them) of form $[-\delta_k,\delta_k]\times [-L_k,L_k]^{n-1}$ such that $\delta_k\to 0^+$ and $L_k\to\infty$. 

We will show that in case (B), $\widetilde\lambda_{1,\sigma}(\Omega_k)\to\infty$, which contradicts \eqref{eigenequi} and the fact that $\Omega_k$ is a minimizing sequence. Denote $Q_k=[-\delta_k,\delta_k]\times [-L_k,L_k]^{n-1}$. Since $C^\infty_c(\Omega_k)\subset C^\infty_c(Q_k)$, it is clear that $\widetilde\lambda_{1,\sigma}(Q_k)\le \widetilde\lambda_{1,\sigma}(\Omega_k)$. Therefore, we only need to show that $\widetilde\lambda_{1,\sigma}(Q_k)\to\infty$. 

Let $Q_0=[-1,1]\times \R^{n-1}$. We claim that there exists $C>0$ such that
\begin{equation}\label{eq:globaleigen}
\|\varphi\|_{L^2(\R^n)}\le C \|\varphi\|_{\mathring H^\sigma(\R^n)}\quad\mbox{for all }\varphi\in C^\infty_c(Q_0).
\end{equation}
Indeed, for every $x'\in\R^{n-1}$, we have
\[
\int_{-1}^1 \varphi(x_1,x')^2\,\ud x_1\le C \|\varphi(\cdot,x')\|^2_{\mathring H^\sigma(\R^1)}= C \int_\R |\xi_1|^{2\sigma}\phi(\xi_1,x')^2\,\ud \xi_1,
\]
where $\phi(\xi_1,x')$ is the (full) Fourier transform of $\varphi(x_1,x')$ in the $x_1$ variable. Integrating the above inequality, we have
\begin{align*}
\int_{\R^n} \varphi(x)^2\,\ud x&\le  C\int_\R |\xi_1|^{2\sigma} \,\ud \xi_1\int_{\R^{n-1}}\phi(\xi_1,x')^2\,\ud x'\\
&= C\int_\R |\xi_1|^{2\sigma} \,\ud \xi_1\int_{\R^{n-1}}\hat \varphi(\xi_1,\xi')^2\,\ud \xi'\\
&= C\int_{\R^n} |\xi_1|^{2\sigma} \hat \varphi(\xi)^2\,\ud \xi\\
&\le C\int_{\R^n} |\xi|^{2\sigma} \hat \varphi(\xi)^2\,\ud \xi=C \|\varphi\|_{\mathring H^\sigma(\R^n)},
\end{align*}
where $\hat\varphi$ is the Fourier transform of $\varphi$, and we used the Plancherel theorem in the first equality. This proves \eqref{eq:globaleigen}. 

Then we have
\begin{align*}
\widetilde\lambda_{1,\sigma}(Q_k)&\ge \inf_{\varphi\in C^\infty_c([-\delta_k,\delta_k]\times\R^{n-1})}\frac{\|\varphi\|^2_{\mathring H^\sigma(\R^n)}}{\|\varphi\|^2_{L^2(\R^n)}}\\
&= \frac{1}{\delta_k^{2\sigma}} \inf_{\varphi\in C^\infty_c([-1,1]\times\R^{n-1})}\frac{\|\varphi\|^2_{\mathring H^\sigma(\R^n)}}{\|\varphi\|^2_{L^2(\R^n)}}\\
&\ge \frac{1}{C\delta_k^{2\sigma}}\\
&\to \infty\quad\mbox{as }k\to\infty.
\end{align*}
We reached a contradiction, and thus case (A) holds: $\Omega_k$ converges in the Hausdorff distance to a bounded convex open set  $\Omega_\infty$ with $|\Omega_\infty|=1$.

Suppose that 
\[
\lda_{1,\sigma}(\Omega_k)=I_{n,\sigma,\Omega_k}[u_k]
\]
for some $u_k\in \mathring H^\sigma(\Omega_k)$ such that $\|u_k\|_{L^2(\Omega_k)}=1$. We extend $u_k$ to be zero in $\R^n\setminus\Omega_k$. By \eqref{eq:equivalent}, we know that $u_k$ is bounded in $\mathring H^\sigma(\R^n)$. Then subject to a subsequence,  there exists $u\in \mathring H^\sigma(\R^n)$ such that $u_k\rightharpoonup u$ weakly in $\mathring H^\sigma(\R^n)$ and $u_k\to u$ in $L^2(\R^n)$. Hence, $u\equiv 0$ in $\R^n\setminus\Omega_\infty$ and $\|u\|_{L^2(\Omega_\infty)}=1$. Morevoer, by Fatou's Lemma, we have
\begin{align*}
\liminf_{k\to\infty}\iint_{\Omega_k\times\Omega_k} \frac{(u_k(x)-u_k(y))^2}{|x-y|^{n+2\sigma}} \,\ud x\,\ud y\ge \iint_{\Omega_\infty\times\Omega_\infty} \frac{(u(x)-u(y))^2}{|x-y|^{n+2\sigma}} \,\ud x\,\ud y.
\end{align*}
Hence,
\[
\lda_{1,\sigma}(\Omega_\infty)=\inf\{\lda_{1,\sigma}(\Omega): \Omega\subset\R^n \mbox{ is a convex open set and } |\Omega|=1\}.
\]
This finishes the proof of this theorem.
\end{proof}

\section{A formulation for functions}\label{sec:functionformula}
In this section, we would like reformulate the  variational problem \eqref{prob:variational} for domains to  a variational problem for functions. Since $||u(x)|-|u(y)||\le |u(x)-u(y)|$, we know that the first eigenfunction, that are the solutions of \eqref{eq:eigen}, do not change signs in $\Omega$.  The next lemma states that they do not vanish in $\Omega$.

\begin{lem}\label{lem:positiveeigenfunction}
Let $n\ge 2$, $\Omega\subset\R^n$ be a open set,  $\sigma\in (\frac 12, 1)$. Let $ u\in \mathring H^\sigma(\Omega), u\not\equiv 0,$ be a nonnegative weak solution of \eqref{eq:eigen}, that is, 
\[
\iint_{\Omega\times\Omega}\frac{(u(x)-u(y))(\varphi(x)-\varphi(y))}{|x-y|^{n+2\sigma}}\,\ud x\ud y=\lambda_{1,\Omega}\int_{\Omega}u(x)\varphi(x)\,\ud x
\]
for every $\varphi\in \mathring H^\sigma(\Omega)$. Then $u$ is smooth and positive in $\Omega$.
\end{lem}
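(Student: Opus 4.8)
The plan is to first establish positivity (the function cannot vanish anywhere inside $\Omega$), and then bootstrap to smoothness using interior regularity for the regional fractional Laplacian. For positivity, the strategy is a Harnack-type / strong maximum principle argument adapted to the nonlocal regional operator. Since $\sigma > 1/2$, Lemma \ref{lem:equivalentnorms} gives $\mathring H^\sigma(\Omega) \hookrightarrow \mathring H^\sigma(\R^n) \hookrightarrow L^2(\R^n)$ continuously, and in fact the regional form controls the full form, so $u$, extended by zero outside $\Omega$, lies in $\mathring H^\sigma(\R^n)$. First I would record that $u$ is a weak subsolution of $(-\Delta)^\sigma_\Omega u \le \lambda_{1,\sigma}(\Omega) u$ in the sense of the form, and then use the local boundedness estimate (De Giorgi--Nash--Moser iteration for nonlocal operators, as in the work of Di~Castro--Kuusi--Palatucci on the fractional $p$-Laplacian, or Kassmann's results, or the interior regularity theory for censored stable processes in Chen--Kim--Song) to conclude $u \in L^\infty_{loc}(\Omega)$; this is essentially Lemma \ref{lem:upperbound} applied locally. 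Then interior Hölder continuity follows from the same theory, so $u$ is continuous in $\Omega$.

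Next, to rule out interior zeros, suppose $u(x_0) = 0$ for some $x_0 \in \Omega$; since $u$ is continuous and nonnegative, $x_0$ is an interior minimum. Pick a ball $B_{2r}(x_0) \Subset \Omega$. Testing the weak formulation with a suitable nonnegative test function supported near $x_0$, or more directly evaluating the equation pointwise once we know $u$ is continuous (and using that the regional operator at a point only sees values of $u$ inside $\Omega$), we get
\[
(-\Delta)^\sigma_\Omega u(x_0) = 2\lim_{\va\to 0}\int_{\{y\in\Omega:\,|y-x_0|\ge\va\}} \frac{u(x_0)-u(y)}{|x_0-y|^{n+2\sigma}}\,\ud y = -2\int_{\Omega}\frac{u(y)}{|x_0-y|^{n+2\sigma}}\,\ud y,
\]
which, on the other hand, equals $\lambda_{1,\sigma}(\Omega)\,u(x_0) = 0$. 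Hence $\int_\Omega u(y)|x_0-y|^{-n-2\sigma}\,\ud y = 0$, forcing $u \equiv 0$ a.e.\ in $\Omega$, contradicting $u \not\equiv 0$. A small technical point is justifying that the pointwise identity holds at $x_0$: the principal value integral is genuinely convergent there because $u(x_0)-u(y) = -u(y) \le 0$ has a sign and, by the Hölder continuity near $x_0$ together with $u(x_0)=0$, $|u(y)| \le C|x_0-y|^\gamma$ for $|y-x_0|$ small, so the integrand is $O(|x_0-y|^{\gamma - n - 2\sigma})$ near $x_0$ and the integral converges provided the Hölder exponent $\gamma$ from the regularity theory can be taken larger than $2\sigma - 1 \cdot$; if the a priori Hölder exponent is too small one instead argues at the level of the weak formulation, testing against approximations to a Dirac mass, or uses that $u$ is in fact smooth near any point where it is bounded below — but to avoid circularity the cleanest route is the weak one: use $\varphi \ge 0$ supported in $B_r(x_0)$ in the weak formulation to derive $\iint \cdots \ge -C\int_{B_r} u\varphi$ combined with a lower bound, obtaining the strong minimum principle directly.

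Once positivity of $u$ in $\Omega$ is known, smoothness is a standard bootstrap: locally $u$ is bounded and bounded away from $0$, so writing the equation as $(-\Delta)^\sigma_\Omega u = \lambda_{1,\sigma}(\Omega) u$ with right-hand side in $L^\infty_{loc}$, Schauder estimates for the regional fractional Laplacian (the operator $(-\Delta)^\sigma_\Omega$ differs from $(-\Delta)^\sigma$ by a lower-order nonlocal term with a smooth kernel in the interior, namely $u(x)\int_{\R^n\setminus\Omega}|x-y|^{-n-2\sigma}\,\ud y$, which is a smooth function times $u$ on compact subsets of $\Omega$) upgrade $u \in C^\gamma_{loc}$ to $u \in C^{2\sigma+\gamma}_{loc}$, and iterating gives $u \in C^\infty_{loc}(\Omega)$. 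The main obstacle I anticipate is the first step — carefully justifying the strong minimum principle in the nonlocal regional setting, in particular handling the subtlety that the kernel is only integrable at infinity and making sure the interior regularity theory one invokes genuinely applies to $(-\Delta)^\sigma_\Omega$ rather than the full fractional Laplacian; the decomposition $I_{n,\sigma,\Omega}[u] = I_{n,\sigma,\R^n}[u] - 2\int_\Omega u^2(x)\big(\int_{\R^n\setminus\Omega}|x-y|^{-n-2\sigma}\,\ud y\big)\ud x$ from the introduction is the tool that reduces everything to the well-developed theory for $(-\Delta)^\sigma$ on $\R^n$ with a bounded potential on compact subsets.
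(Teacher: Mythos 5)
Your proposal is essentially the same approach as the paper: both hinge on the decomposition $(-\Delta)^\sigma_\Omega u = (-\Delta)^\sigma u - c(x)\,u$ with $c(x) = \int_{\R^n\setminus\Omega}|x-y|^{-n-2\sigma}\ud y$ smooth in the interior of $\Omega$, both invoke interior regularity theory for the full fractional Laplacian with a smooth potential, and both rule out interior zeros by evaluating the regional operator pointwise at a zero of the nonnegative $u$ to force $\int_\Omega u(y)|x_0-y|^{-n-2\sigma}\ud y = 0$.

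The one structural difference is worth flagging, because it leads you to worry about a circularity that is not actually there. You propose the order \emph{boundedness $\to$ H\"older continuity $\to$ positivity $\to$ smoothness}, and at the positivity step you fret about whether the H\"older exponent is large enough to make the principal-value integral at $x_0$ converge, hedging toward a weak-formulation argument to avoid circularity. The paper simply does \emph{smoothness first, then positivity}: since the lower-order term $c(x)u$ has a smooth coefficient on compact subsets of $\Omega$, the standard interior regularity and bootstrap for $(-\Delta)^\sigma u = (\lambda + c)u$ yields $u \in C^\infty_{loc}(\Omega)$ outright, with no reference to positivity whatsoever (the linear right-hand side $\lambda u$ inherits whatever regularity $u$ has, so Schauder iteration does not need $u$ to be bounded below). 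With $u$ smooth, the pointwise evaluation of the regional operator at a zero $x_0$ is unambiguous, and the strong maximum principle falls out immediately. Your statement that ``once positivity is known, smoothness is a standard bootstrap: locally $u$ is bounded and bounded away from $0$'' imposes a dependency that the argument does not require; dropping it removes the apparent circularity and makes the proof shorter. Apart from this ordering, your proposal identifies all the relevant ingredients.
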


\begin{proof}
For $u,\varphi\in\mathring H^\sigma(\Omega)$, we extend $u$ and $\varphi$ to be identically zero in $\R^n\setminus\Omega$, and by Lemma \ref{lem:equivalentnorms}, they are functions in $\mathring H^\sigma(\R^n)$. We can rewrite the integral as
\[
\iint_{\R^n\times\R^n}\frac{(u(x)-u(y))(\varphi(x)-\varphi(y))}{|x-y|^{n+2\sigma}}\,\ud x\ud y-\int_{\Omega}c(x)u(x)\varphi(x)\,\ud x =\lambda_{1,\Omega}\int_{\Omega}u(x)\varphi(x)\,\ud x,
\]
where
\[
c(x)=\int_{\R^n\setminus\Omega} \frac{1}{|x-y|^{n+2\sigma}}\,\ud y.
\]
That is, $u$ is a weak solution of
\begin{align*}
(-\Delta)^\sigma u(x)-c(x)u(x)&=\lambda_{1,\Omega} u(x)\quad\mbox{on }\Omega,\\
u&=0\quad\mbox{in }\R^n\setminus\Omega.
\end{align*}
Since $c(x)$ is smooth in $\Omega$, by the standard regularity theory for fractional Laplacian equations, $u$ is smooth in $\Omega$. Hence, the equation \eqref{eq:eigen} holds pointwise, that is, 
\[
2 \lim_{\va\to 0} \int_{\{y\in\Omega:\ |y-x|\ge \va\}} \frac{u(x)-u(y)}{|x-y|^{n+2\sigma}}\,\ud y=\lambda_{1,\Omega} u(x)\quad\mbox{for every }x\in\Omega.
\]
Therefore, if there exists $x_0\in\Omega$ such that $u(x_0)=0$, then since $u$ is nonnegative in $\Omega$, $u$ must be identically zero. This is a contradiction.
\end{proof}

Let $N>0$ be a positive real number. Let us consider an alternative variational problem
\begin{equation}\label{eq:equivalentvp}
m(N):=\inf\left\{\frac{I_{n,\sigma,\{u>0\}}[u] }{\|u\|^2_{L^2(B_N)}}+|\{u>0\}|: u\in \mathring H^\sigma(B_N),\ \ u\not\equiv 0, u\ge 0\mbox{ in }B_N\right\}.
\end{equation}

\begin{lem}\label{lem:equiv} $m(N)=\inf\left\{\lda_{1,\sigma}(\Omega)+|\Omega|: \Omega\subset B_N \mbox{ is an open set}  \right\}.$
\end{lem}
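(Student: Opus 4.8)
The plan is to prove the two inequalities $m(N) \le \inf\{\lambda_{1,\sigma}(\Omega) + |\Omega|\}$ and $m(N) \ge \inf\{\lambda_{1,\sigma}(\Omega) + |\Omega|\}$ separately, where both infima run over open $\Omega \subset B_N$.

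\textbf{The easy direction} ($\le$). Fix any open $\Omega \subset B_N$. By the discussion preceding the statement, $\lambda_{1,\sigma}(\Omega)$ is attained by a first eigenfunction $u \in \mathring H^\sigma(\Omega)$, which (up to replacing $u$ by $|u|$, using $\bigl||u(x)|-|u(y)|\bigr| \le |u(x)-u(y)|$) we may take nonnegative, and by Lemma \ref{lem:positiveeigenfunction} we have $u > 0$ throughout $\Omega$; hence $\{u > 0\} = \Omega$. Extending $u$ by zero, $u \in \mathring H^\sigma(B_N)$ and $\|u\|_{L^2(B_N)}^2 = \|u\|_{L^2(\Omega)}^2$, so $u$ is admissible in \eqref{eq:equivalentvp} and
\[
m(N) \le \frac{I_{n,\sigma,\{u>0\}}[u]}{\|u\|_{L^2(B_N)}^2} + |\{u>0\}| = \frac{\lambda_{1,\sigma}(\Omega)\|u\|_{L^2(\Omega)}^2}{\|u\|_{L^2(\Omega)}^2} + |\Omega| = \lambda_{1,\sigma}(\Omega) + |\Omega|.
\]
Taking the infimum over $\Omega$ gives $m(N) \le \inf\{\lambda_{1,\sigma}(\Omega) + |\Omega|\}$.

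\textbf{The other direction} ($\ge$). Let $u \in \mathring H^\sigma(B_N)$ be any nonnegative admissible competitor in \eqref{eq:equivalentvp}. The natural idea is to take $\Omega = \{u > 0\}$, but this set need not be open, so — as in the proof of Lemma \ref{lem:sobolevforus} — I would use outer regularity of Lebesgue measure to pick open sets $\Omega^k$ with $\{u > 0\} \subset \Omega^k \subset B_N$ and $|\Omega^k \setminus \{u>0\}| \to 0$. Then $u \in \mathring H^\sigma(\Omega^k)$ (its support is contained in $\overline{\{u>0\}}$, and one checks $u$ lies in the completion; alternatively approximate $u$ by $C^1_c$ functions supported in $\{u>0\}$, which sit in $C^1_c(\Omega^k)$), so $\lambda_{1,\sigma}(\Omega^k) \le I_{n,\sigma,\Omega^k}[u]/\|u\|_{L^2(B_N)}^2$. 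By monotone convergence (exactly as in Lemma \ref{lem:sobolevforus}, since the sets decrease down to $\{u>0\}$) one has $I_{n,\sigma,\Omega^k}[u] \to I_{n,\sigma,\{u>0\}}[u]$, while $|\Omega^k| \to |\{u>0\}|$. Hence
\[
\inf_{\Omega \subset B_N \text{ open}} \bigl(\lambda_{1,\sigma}(\Omega) + |\Omega|\bigr) \le \liminf_{k\to\infty}\Bigl(\frac{I_{n,\sigma,\Omega^k}[u]}{\|u\|_{L^2(B_N)}^2} + |\Omega^k|\Bigr) = \frac{I_{n,\sigma,\{u>0\}}[u]}{\|u\|_{L^2(B_N)}^2} + |\{u>0\}|.
\]
Taking the infimum over admissible $u$ yields $\inf\{\lambda_{1,\sigma}(\Omega) + |\Omega|\} \le m(N)$, completing the proof.

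\textbf{Expected main obstacle.} The only genuinely delicate point is the claim that a nonnegative $u \in \mathring H^\sigma(B_N)$ with $\{u>0\}$ of positive measure actually belongs to $\mathring H^\sigma(\Omega^k)$, i.e.\ can be approximated in the $I_{n,\sigma,\Omega^k}$-norm by functions in $C^1_c(\Omega^k)$. One must be careful that $u$ being "supported where it is positive" is compatible with the completion defining $\mathring H^\sigma$; this is a standard but slightly technical density/truncation argument (truncate $u$ at level $\delta$, then mollify, using $\sigma \in (\tfrac12,1)$ and Lemma \ref{lem:equivalentnorms} to control errors), and it is the step I would write out most carefully. Everything else is bookkeeping with monotone convergence and the variational characterization of $\lambda_{1,\sigma}$.
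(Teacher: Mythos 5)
Your proof is correct and follows essentially the same route as the paper: the easy direction is identical (first eigenfunction is positive on $\Omega$ by Lemma~\ref{lem:positiveeigenfunction}, hence $\{u>0\}=\Omega$), and the hard direction uses outer regularity to pick open $\Omega^k\supset\{u>0\}$ shrinking in measure, plus dominated/monotone convergence for the Gagliardo integrals. One small procedural difference: where you test $u$ itself against $\lambda_{1,\sigma}(\Omega^k)$ (so you need $u\in\mathring H^\sigma(\Omega^k)$), the paper instead produces nonnegative $u_j\in C^\infty_c(\Omega_\va)$ with $u_j\to u$ in $H^\sigma(\Omega_\va)$ and then tests $u_j$ against $\lambda_{1,\sigma}(\{u_j>0\})$; the set $\{u_j>0\}$ is open by continuity of $u_j$, and membership of $u_j$ in $\mathring H^\sigma(\{u_j>0\})$ is routine for a smooth, nonnegative, compactly supported $u_j$ via the truncations $(u_j-\delta)^+\in C^{0,1}_c(\{u_j>0\})$. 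Both versions ultimately rest on the same density assertion that you correctly flag as the delicate point (that a nonnegative $u\in\mathring H^\sigma(B_N)$ can be approximated in the $H^\sigma(\Omega_\va)$ norm by $C^\infty_c(\Omega_\va)$ functions); the paper simply states this as "Moreover, there exists a sequence of functions $u_j\in C^\infty_c(\Omega_\va)$ \dots" without further detail, so your explicit acknowledgment of the issue is actually a point in your favor rather than a deficiency.
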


\begin{proof}
Let $\Omega\subset B_N$ be an open set, and let $u\in \mathring H^\sigma(\Omega)$ be such that $\lda_{1,\sigma}(\Omega)= I_{n,\sigma,\Omega}[u]$, $\|u\|_{L^2}=1$. Then, by Lemma \ref{lem:positiveeigenfunction}, we can choose that $u>0$ in $\Omega$.   Hence,
\[
m(N)\le \lda_{1,\sigma}(\Omega)+|\Omega|.
\]
Taking the infimum over all open sets, we obtain
\[
m(N)\le \inf\left\{\lda_{1,\sigma}(\Omega)+|\Omega|: \Omega\subset B_N \mbox{ is an open set}  \right\}.
\]
On the other hand, let $u\in \mathring H^\sigma(B_N)$ with $u\ge 0$  in $B_N$. For every $\va>0$, there exists an open set $\Omega_\va\subset B_N$ (otherwise just considering $\Omega_\va\cap B_N$) such that  $\{u>0\}\subset\Omega_\va$, $|\Omega_\va\setminus\{u>0\}|$ is small, and 
\[
\iint_{\Omega_\va\times\Omega_\va}\frac{(u(x)-u(y))^2}{|x-y|^{n+2\sigma}}\,\ud x\ud y\le \iint_{\{u>0\}\times\{u>0\}}\frac{(u(x)-u(y))^2}{|x-y|^{n+2\sigma}}\,\ud x\ud y+\va.
\]
Moreover, there exists a sequence of functions $u_j\in C^\infty_c(\Omega_\va)$ with $u_j \geq 0$ such that $u_j \to u$ in $H^\sigma(\Omega_\va)$. Since $\{u_j>0\}$ is open, we have
\[
\frac{I_{n,\sigma,\{u_j>0\}}[u_j] }{\|u_j\|^2_{L^2(B_N)}}+|\{u_j>0\}|\ge \inf\left\{\lda_{1,\sigma}(\Omega)+|\Omega|: \Omega\subset B_N \mbox{ is an open set}  \right\}.
\]
Since
\[
\frac{I_{n,\sigma,\{u_j>0\}}[u_j] }{\|u_j\|^2_{L^2(B_N)}}\le \frac{I_{n,\sigma,\Omega_\va}[u_j] }{\|u_j\|^2_{L^2(B_N)}}\le  \frac{I_{n,\sigma,\Omega_\va}[u]+\va}{\|u_j\|^2_{L^2(B_N)}}\le  \frac{I_{n,\sigma,\{u>0\}}[u] +2\va}{\|u_j\|^2_{L^2(B_N)}}
\]
for all large $j$, we can send $j\to\infty$ and then $\va\to0$ to obtain
\[
m(N)\ge \inf\left\{\lda_{1,\sigma}(\Omega)+|\Omega|: \Omega\subset B_N \mbox{ is an open set}  \right\}.
\]
\end{proof}
\begin{lem}\label{lem:equivachieved}
The $m(N)$ defined in \eqref{eq:equivalentvp} is attained by a function $u\in\mathring H^\sigma(B_N)$.
\end{lem}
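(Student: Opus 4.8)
The plan is to run the direct method on the functional in \eqref{eq:equivalentvp}, the only subtlety being the lack of compactness coming from the measure term $|\{u>0\}|$ and the fact that $\{u>0\}$ is not a continuous function of $u$ under weak convergence. First I would take a minimizing sequence $u_k\in\mathring H^\sigma(B_N)$, $u_k\ge 0$, normalized so that $\|u_k\|_{L^2(B_N)}=1$; then $I_{n,\sigma,\{u_k>0\}}[u_k]$ is bounded, and by Lemma \ref{lem:sobolevforus} (applied to $u_k$ extended by zero, noting $\{u_k>0\}\subset B_N$) the full Gagliardo energy $I_{n,\sigma,\R^n}[u_k]$ is bounded as well. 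By the compact embedding $\mathring H^\sigma(B_N)\hookrightarrow L^2(B_N)$ we may pass to a subsequence with $u_k\rightharpoonup u$ weakly in $\mathring H^\sigma(\R^n)$ and $u_k\to u$ strongly in $L^2$, so $\|u\|_{L^2(B_N)}=1$ and in particular $u\not\equiv 0$, $u\ge 0$, $u\in\mathring H^\sigma(B_N)$.

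The next step is to show $u$ is admissible with the right energy bounds. Weak lower semicontinuity of $I_{n,\sigma,\R^n}$ and Fatou give $I_{n,\sigma,\R^n}[u]\le\liminf I_{n,\sigma,\R^n}[u_k]$, but we need the \emph{restricted} energy $I_{n,\sigma,\{u>0\}}[u]$, which is smaller than the full energy by the Hardy-type correction term. Here I would argue as in the proof of Lemma \ref{lem:sobolevforus}/Lemma \ref{lem:equiv}: for $\va>0$ pick an open $\Omega_\va$ with $\{u>0\}\subset\Omega_\va\subset B_N$ and $|\Omega_\va\setminus\{u>0\}|<\va$ and $I_{n,\sigma,\Omega_\va}[u]\le I_{n,\sigma,\{u>0\}}[u]+\va$; then $u\in\mathring H^\sigma(\Omega_\va)$ and by weak lower semicontinuity of $I_{n,\sigma,\Omega_\va}$ (which is just Fatou on $\Omega_\va\times\Omega_\va$ applied to a further a.e.-convergent subsequence, or standard convexity lower semicontinuity) together with $\{u_k>0\}\supset\{u_k>0\}\cap\Omega_\va$ we get
\[
I_{n,\sigma,\Omega_\va}[u]\le\liminf_{k\to\infty} I_{n,\sigma,\Omega_\va}[u_k]\le\liminf_{k\to\infty} I_{n,\sigma,\{u_k>0\}}[u_k].
\]
Letting $\va\to 0$ yields $I_{n,\sigma,\{u>0\}}[u]\le\liminf_k I_{n,\sigma,\{u_k>0\}}[u_k]$.

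For the measure term I would use lower semicontinuity of $u\mapsto|\{u>0\}|$ under $L^2$ (equivalently a.e.) convergence: since $u_k\to u$ a.e.\ along a subsequence, $\{u>0\}\subset\liminf_k\{u_k>0\}$ up to a null set, hence by Fatou $|\{u>0\}|\le\liminf_k|\{u_k>0\}|$. Combining the two inequalities with $\|u\|_{L^2(B_N)}=1=\|u_k\|_{L^2(B_N)}$ gives
\[
\frac{I_{n,\sigma,\{u>0\}}[u]}{\|u\|_{L^2(B_N)}^2}+|\{u>0\}|\le\liminf_{k\to\infty}\left(\frac{I_{n,\sigma,\{u_k>0\}}[u_k]}{\|u_k\|_{L^2(B_N)}^2}+|\{u_k>0\}|\right)=m(N),
\]
so $u$ attains $m(N)$. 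The main obstacle is the middle step: the naive lower semicontinuity bound only controls the \emph{full} energy $I_{n,\sigma,\R^n}[u]$, and one must be careful that the limit's restricted energy on the a priori unknown set $\{u>0\}$ is still controlled — this is exactly what the $\Omega_\va$ exhaustion and monotone/dominated convergence (as already deployed in Lemmas \ref{lem:sobolevforus} and \ref{lem:equiv}) are for; once that is set up the rest is routine.
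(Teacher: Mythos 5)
The overall strategy (direct method, normalization, $\mathring H^\sigma$-boundedness via Lemma \ref{lem:sobolevforus}, weak convergence plus strong $L^2$ and a.e.\ convergence along a subsequence, lower semicontinuity of both the restricted energy and the measure term) is the same as the paper's, and your treatment of the measure term $|\{u>0\}|$ is correct. However, the central lower-semicontinuity step for the restricted Gagliardo energy has a genuine gap. The second inequality in your chain,
\[
\liminf_{k\to\infty} I_{n,\sigma,\Omega_\va}[u_k]\le\liminf_{k\to\infty} I_{n,\sigma,\{u_k>0\}}[u_k],
\]
does not follow from anything you have said, and is false in general. For it to hold one would need $\Omega_\va\subset\{u_k>0\}$ (so that the domain of integration on the left is contained in the one on the right), but $\Omega_\va$ is a fixed open neighborhood of the \emph{limit}'s positivity set $\{u>0\}$ and has no containment relation with the sets $\{u_k>0\}$. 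The trivial observation $\{u_k>0\}\supset\{u_k>0\}\cap\Omega_\va$ only yields $I_{n,\sigma,\{u_k>0\}\cap\Omega_\va}[u_k]\le I_{n,\sigma,\{u_k>0\}}[u_k]$, which is a smaller quantity than $I_{n,\sigma,\Omega_\va}[u_k]$, not a larger one — the comparison goes the wrong way on the set $\Omega_\va\setminus\{u_k>0\}$, where the Gagliardo integrand for $u_k$ need not vanish.

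The paper avoids the $\Omega_\va$-exhaustion entirely (the exhaustion is used in Lemmas \ref{lem:sobolevforus} and \ref{lem:equiv} for a single fixed function, not a varying sequence). Instead it applies Fatou's lemma once, on $\R^n\times\R^n$, to the nonnegative integrand
\[
g_k(x,y)=\mathbf{1}_{\{u_k>0\}}(x)\,\mathbf{1}_{\{u_k>0\}}(y)\,\frac{(u_k(x)-u_k(y))^2}{|x-y|^{n+2\sigma}},
\]
observing that since $u_k\to u$ a.e., for a.e.\ $(x,y)$ with $u(x)>0$ and $u(y)>0$ one has $u_k(x),u_k(y)>0$ eventually and $g_k(x,y)\to g(x,y)$, while $g(x,y)=0$ otherwise; hence $g\le\liminf_k g_k$ a.e.\ and Fatou gives $I_{n,\sigma,\{u>0\}}[u]\le\liminf_k I_{n,\sigma,\{u_k>0\}}[u_k]$ directly. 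The key point you were missing is that the indicator functions of $\{u_k>0\}$ themselves satisfy a Fatou-type inequality under a.e.\ convergence, so the domain restriction can be folded into the integrand rather than frozen on an $\va$-neighborhood of the limit's support.
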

\begin{proof}
Let $u_k$ be a minimizing sequence of $m$ in \eqref{eq:equivalentvp}. Suppose that $\|u_k\|_{L^2(B_N)}=1$. Since $u_k\in \mathring H^\sigma(B_N)$, we know from \eqref{eq:equivalent}  that $u_k\in\mathring H^\sigma(\R^n)$, and 
\[
\iint_{\{u_k>0\}\times\{u_k>0\}} \frac{(u_k(x)-u_k(y))^2}{|x-y|^{n+2\sigma}} \,\ud x\,\ud y\le C.
\]
By Lemma \ref{lem:sobolevforus}, we know that $\{u_k\}$ is a bounded sequence in $\mathring H^\sigma(\R^n)$.  Then passing to a subsequence,  there exists $u\in \mathring H^\sigma(\R^n)$ such that $u_k\rightharpoonup u$ weakly in $\mathring H^\sigma(\R^n)$ and $u_k\to u$ strongly in $L^2(B_N)$. Hence, $u\ge 0$ in $B_N$, $u\equiv 0$ in $\R^n\setminus B_N$ and $\|u\|_{L^2(B_N)}=1$. Morevoer, by Fatou's Lemma, we have
\begin{align*}
\liminf_{k\to\infty}\iint_{\{u_k>0\}\times\{u_k>0\}} \frac{(u_k(x)-u_k(y))^2}{|x-y|^{n+2\sigma}} \,\ud x\,\ud y\ge \iint_{\{u>0\}\times\{u>0\}} \frac{(u(x)-u(y))^2}{|x-y|^{n+2\sigma}} \,\ud x\,\ud y
\end{align*}
and
\[
\liminf_{k\to\infty} |\{u_k>0\}|\ge |\{u>0\}|.
\]
Thus, $m(N)$ is attained by $u$.
\end{proof}

Now we can show that the variational problem \eqref{prob:variational} is equivalent to \eqref{eq:quantity}.
\begin{prop}\label{prop:equivalentproblems}
We have
\begin{align*}
&\inf\{\lda_{1,\sigma}(\Omega)+|\Omega|: \ \Omega\subset\R^n \mbox{ a bounded open set}\}\\
&= \inf\left\{\frac{I_{n,\sigma,\{u>0\}}[u] }{\|u\|^2_{L^2(\R^n)}}+|\{u>0\}|: u\in \mathring H^\sigma(\R^n),\ \ u\not\equiv 0, u\ge 0\mbox{ in }\R^n\right\}.
\end{align*}
\end{prop}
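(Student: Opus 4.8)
The plan is to prove the two infima are equal by establishing inequalities in both directions, using Lemma~\ref{lem:equiv} and Lemma~\ref{lem:equivachieved} to pass between the domain problem and the function problem on a fixed ball $B_N$, and then letting $N \to \infty$. Write $m_\infty$ for the right-hand side (the infimum over $u \in \mathring H^\sigma(\R^n)$) and recall $m(N)$ from \eqref{eq:equivalentvp}. First, note the two easy monotonicity/containment facts: the domain infimum over all bounded open $\Omega$ equals $\inf_N \inf\{\lambda_{1,\sigma}(\Omega) + |\Omega| : \Omega \subset B_N\}$, since any bounded open set fits inside some large ball (after translation, using translation invariance of $\lambda_{1,\sigma}$); and $m(N) \ge m_\infty$ trivially, because a competitor $u \in \mathring H^\sigma(B_N)$ extended by zero is a competitor for $m_\infty$ with the same value of the functional — here one uses that $\|u\|_{L^2(B_N)} = \|u\|_{L^2(\R^n)}$ and $\{u>0\} \subset B_N$. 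Combined with Lemma~\ref{lem:equiv}, which says $m(N) = \inf\{\lambda_{1,\sigma}(\Omega) + |\Omega| : \Omega \subset B_N\}$, this already gives
\[
\inf\{\lambda_{1,\sigma}(\Omega) + |\Omega| : \Omega \subset \R^n \text{ bounded open}\} = \inf_N m(N) \ge m_\infty.
\]

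For the reverse inequality $m_\infty \ge \inf\{\lambda_{1,\sigma}(\Omega)+|\Omega|\}$, take an arbitrary competitor $u \in \mathring H^\sigma(\R^n)$, $u \ge 0$, $u \not\equiv 0$ for the $m_\infty$ problem; I want to produce, for every $\varepsilon > 0$, a bounded open set $\Omega$ with $\lambda_{1,\sigma}(\Omega) + |\Omega| \le \frac{I_{n,\sigma,\{u>0\}}[u]}{\|u\|_{L^2(\R^n)}^2} + |\{u>0\}| + \varepsilon$. The natural move is to truncate: for large $R$, replace $u$ by $u_R := \min(u, R) \cdot \chi_{B_R}$ or, more carefully, by a function supported in $B_R$ that is close to $u$ in $\mathring H^\sigma(\R^n)$ and in $L^2$; such $u_R$ lie in $\mathring H^\sigma(B_{R'})$ for some $R'$, and as $R \to \infty$ one has $I_{n,\sigma,\{u_R>0\}}[u_R] \to I_{n,\sigma,\{u>0\}}[u]$ (by Lemma~\ref{lem:sobolevforus} the relevant norm is controlled, and $u_R \to u$ in $\mathring H^\sigma(\R^n)$ forces convergence of the regional forms since the "correction term" $\int u^2 c(x)$ is continuous under $L^2$-convergence with uniformly controlled Hardy weights), $\|u_R\|_{L^2} \to \|u\|_{L^2}$, and $|\{u_R > 0\}| \le |\{u>0\}|$. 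This shows $m_\infty \ge \inf_N m(N)$, and then Lemma~\ref{lem:equiv} again converts the right side to the domain infimum. Chaining the two inequalities gives equality.

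An alternative, cleaner route avoiding the truncation analysis: observe directly that $m_\infty = \inf_N m(N)$. Indeed "$\le$" holds because $\bigcup_N \mathring H^\sigma(B_N)$ is dense in the relevant sense — any $u \in \mathring H^\sigma(\R^n)$ with bounded positivity set... but wait, $\{u > 0\}$ need not be bounded for a general $m_\infty$-competitor, so this density claim requires exactly the truncation argument above. So the truncation step is genuinely needed. The "$\ge$" direction $m_\infty \le m(N)$ for each $N$ is the trivial extension-by-zero already noted. Hence $m_\infty = \inf_N m(N) = \inf_N \inf\{\lambda_{1,\sigma}(\Omega)+|\Omega| : \Omega \subset B_N\} = \inf\{\lambda_{1,\sigma}(\Omega)+|\Omega| : \Omega \subset \R^n \text{ bounded open}\}$, which is the claim.

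The main obstacle is the truncation/approximation step: showing that a general nonnegative $u \in \mathring H^\sigma(\R^n)$ (whose support may be unbounded, even of infinite measure, though the functional is then infinite unless $|\{u>0\}| < \infty$) can be approximated by functions supported in large balls \emph{without increasing the functional in the limit}. The subtlety is entirely in the regional energy $I_{n,\sigma,\{u>0\}}[u]$: one must check that cutting off the tail of $u$ does not create a large jump in the $\{u>0\}\times\{u>0\}$ integral. Using the decomposition $I_{n,\sigma,\{u>0\}}[u] = I_{n,\sigma,\R^n}[u] - 2\int_{\{u>0\}} u^2(x)\big(\int_{\R^n \setminus \{u>0\}} |x-y|^{-n-2\sigma}\,\ud y\big)\,\ud x$ from \eqref{eq:aux2} (valid here since $|\{u>0\}|<\infty$ so the outer set has positive measure), together with dominated convergence for the Hardy-type term and the $\mathring H^\sigma(\R^n)$-convergence of the truncations, makes this manageable; the uniform Hardy bound from Theorem~\ref{thm:fractionalHardy} / Lemma~\ref{lem:equivalentnorms} provides the needed domination.
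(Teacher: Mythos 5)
Your proposal matches the paper's proof: both reduce the statement to showing $m_\infty = \inf_N m(N)$, obtaining one direction by the trivial extension-by-zero of $m(N)$-competitors and the other by a truncation/cutoff argument, and then invoking Lemma~\ref{lem:equiv} to convert $\inf_N m(N)$ to the domain infimum. One small caveat: your initial choice $u_R = \min(u,R)\chi_{B_R}$ is risky since multiplication by $\chi_{B_R}$ is not bounded on $\mathring H^\sigma(\R^n)$ when $\sigma > \tfrac12$, but your hedged ``more careful'' version (a smooth cutoff $\eta_N u$, as the paper uses) is exactly right, and the monotonicity $\{\eta_N u > 0\}\subset\{u>0\}$ together with the elementary bound $I_{n,\sigma,\{u>0\}}[\cdot]\le I_{n,\sigma,\R^n}[\cdot]$ gives $\limsup_N I_{n,\sigma,\{\eta_N u>0\}}[\eta_N u]\le I_{n,\sigma,\{u>0\}}[u]$ without needing the dominated-convergence analysis of the Hardy correction term you sketch at the end.
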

\begin{proof}
First, it is clear from Lemma \ref{lem:equiv} that 
\begin{align*}
\inf&\{\lda_{1,\sigma}(\Omega)+|\Omega|: \ \Omega\subset\R^n \mbox{ a bounded open set}\}\\
&=\inf_{N\in\mathbb{N}} \big(\inf\left\{\lda_{1,\sigma}(\Omega)+|\Omega|: \Omega\subset B_N \mbox{ is an open set}  \right\}\big)\\
&=\inf_{N \in \mathbb{N}} m(N),
\end{align*}
while from the definition of $m(N)$,
\[
\inf_{N\in\mathbb{N}} m(N)\ge \inf\left\{\frac{I_{n,\sigma,\{u>0\}}[u] }{\|u\|^2_{L^2(\R^n)}}+|\{u>0\}|: u\in \mathring H^\sigma(\R^n),\ \ u\not\equiv 0, u\ge 0\mbox{ in }\R^n\right\}.
\]
Second, given $u\in \mathring H^\sigma(\R^n)$, it is elementary to check that $\eta_N u \in \mathring H^\sigma(\R^n)$ and $\eta_N u \to u$ in $\mathring H^\sigma(\R^n)$ as $N\to\infty$, where $\eta_N(x)=\eta(x/N)$ and  $\eta$ is a standard radial cut-off function supported in $B_2$ and equal to $1$ in $B_1$. Hence, 
\[
\inf_{N\in\mathbb{N}} m(N)\le \inf\left\{\frac{I_{n,\sigma,\{u>0\}}[u] }{\|u\|^2_{L^2(\R^n)}}+|\{u>0\}|: u\in \mathring H^\sigma(\R^n),\ \ u\not\equiv 0, u\ge 0\mbox{ in }\R^n\right\}.
\]
This gives 
\begin{equation}\label{eq:infinumequal}
\inf_{N\in\mathbb{N}} m(N)= \inf\left\{\frac{I_{n,\sigma,\{u>0\}}[u] }{\|u\|^2_{L^2(\R^n)}}+|\{u>0\}|: u\in \mathring H^\sigma(\R^n),\ \ u\not\equiv 0, u\ge 0\mbox{ in }\R^n\right\},
\end{equation}
which implies the conclusion.
\end{proof}

\section{An upper bound}\label{sec:upperbound}
Now we turn our attention to the regularity of minimizing functions $u$ for $m(N)$. At present, we only know that they exist and are nonnegative functions in the space $H^\sigma(\R^n)$, supported on $B_N$. In this section, we will show they are bounded and admit a kind of local maximum principle.

\begin{lem}\label{lem:minimizereq}
Let $u\in \mathring H^\sigma(\R^n)$ be a nonnegative minimizer of $m(N)$. Then it  satisfies
\begin{equation}\label{eq:minimizereq}
(-\Delta)^\sigma_{\{u>0\}} u\le \lambda u\quad\mbox{in }\R^n
\end{equation}
in the weak sense, where
\begin{equation}\label{eq:eigenvalue}
\lambda= \|u\|^{-2}_{L^2(\{u>0\})}\iint_{\{u>0\}\times \{u>0\}}\frac{(u(x)-u(y))^2}{|x-y|^{n+2\sigma}}\,\ud x\ud y.
\end{equation}
More precisely,
\begin{equation}\label{eq:minimizereqd}
\iint_{\{u>0\}\times \{u>0\}}\frac{(u(x)-u(y))(\varphi(x)-\varphi(y))}{|x-y|^{n+2\sigma}}\,\ud x\ud y\le \lambda\int_{\R^n}u(x)\varphi(x)\,\ud x
\end{equation}
for every nonnegative  function $\varphi\in\mathring H^\sigma(\R^n)$.
\end{lem}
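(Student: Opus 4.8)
The plan is to probe the minimality of $u$ with the one-sided family of competitors $v_t := (u-t\varphi)_+$, $t>0$ small. Taking the positive part means the perturbation only \emph{shrinks} the positivity set, so the volume term in $m(N)$ cannot work against us; this is also why the outcome is the one-sided inequality \eqref{eq:minimizereqd} (a supersolution property) rather than an equation. By density of the nonnegative elements of $C^1_c(\R^n)$ in the nonnegative cone of $\mathring H^\sigma(\R^n)$, and continuity of both sides of \eqref{eq:minimizereqd} in $\varphi$ with respect to the $\mathring H^\sigma(\R^n)$-norm, it suffices to treat $\varphi\in C^1_c(\R^n)$, $\varphi\ge0$. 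Normalize $\|u\|_{L^2(\R^n)}=1$, so that $\lambda=I_{n,\sigma,\{u>0\}}[u]$ and $m(N)=\lambda+|\{u>0\}|$. Then $v_t$ is an admissible competitor for $m(N)$ (nonnegative, in $\mathring H^\sigma$ since $0\le v_t\le u$ and truncation is bounded there, supported in $B_N$, and $\not\equiv0$ for small $t$ since $\|v_t\|_{L^2}\to1$), and $\{v_t>0\}=\{u>t\varphi\}\subseteq\{u>0\}$, so $|\{v_t>0\}|\le|\{u>0\}|$ and, the integrand being nonnegative, $I_{n,\sigma,\{v_t>0\}}[v_t]\le I_{n,\sigma,\{u>0\}}[v_t]$. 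Feeding $v_t$ into the infimum defining $m(N)$ therefore gives, for all small $t>0$,
\[
I_{n,\sigma,\{u>0\}}[v_t]\ \ge\ \lambda\,\|v_t\|_{L^2(\R^n)}^2 .
\]

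Write $v_t=u-w_t$ with $w_t:=\min(u,t\varphi)=u-(u-t\varphi)_+\ge0$. Expanding the quadratic form and the norm and using $I_{n,\sigma,\{u>0\}}[u]=\lambda\|u\|_{L^2}^2$, the previous inequality reduces to
\[
\mathcal B(u,w_t)-\lambda\int_{\R^n}u\,w_t\ \le\ \tfrac12\,I_{n,\sigma,\{u>0\}}[w_t],
\]
where $\mathcal B(u,w):=\iint_{\{u>0\}\times\{u>0\}}\frac{(u(x)-u(y))(w(x)-w(y))}{|x-y|^{n+2\sigma}}\,\ud x\,\ud y$. I would then divide by $t$ and let $t\to0^+$. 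The term $\tfrac1t\int_{\R^n}u\,w_t=\int_{\R^n}u\min(u/t,\varphi)$ converges to $\int_{\R^n}u\varphi$ by dominated convergence ($\min(u/t,\varphi)\uparrow\varphi$ on $\{u>0\}$, dominated by $\varphi$). If, in the limit, $\tfrac1t\mathcal B(u,w_t)$ produces $\iint_{\{u>0\}\times\{u>0\}}\frac{(u(x)-u(y))(\varphi(x)-\varphi(y))}{|x-y|^{n+2\sigma}}\,\ud x\,\ud y$ (the left-hand side of \eqref{eq:minimizereqd}) and $\tfrac1{2t}I_{n,\sigma,\{u>0\}}[w_t]$ does not contribute a positive amount, then \eqref{eq:minimizereqd} follows.

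The main obstacle is exactly this control of the nonlocal error terms: it is genuinely harder than in the classical local ($H^1$) version of such Euler--Lagrange computations because the Gagliardo energy does not localize. Decompose $\{u>0\}=A_t\cup L_t$ with $A_t:=\{u\ge t\varphi\}$ (where $w_t=t\varphi$) and the transition layer $L_t:=\{0<u<t\varphi\}$ (where $w_t=u$), so $|L_t|\to0$. Expanding $\mathcal B(u,w_t)$ and $I_{n,\sigma,\{u>0\}}[w_t]$ along $A_t,L_t$, the pieces on $A_t\times A_t$ converge to the desired limit (and to $0$) by dominated convergence, using that $\frac{(u(x)-u(y))^2}{|x-y|^{n+2\sigma}}$ and $\frac{|u(x)-u(y)|\,|\varphi(x)-\varphi(y)|}{|x-y|^{n+2\sigma}}$ lie in $L^1(\{u>0\}\times\{u>0\})$; but each functional also produces the self-energy of $w_t$ on $L_t\times L_t$ and an interaction integral on $A_t\times L_t$, neither obviously $o(t)$. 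I would handle these as follows. First, once everything is moved to one side the $L_t\times L_t$ self-energy enters with the favorable sign (it carries $+\tfrac1{2t}$ on the side that must be $\le0$), so it may simply be discarded — just as $\int_{\{0<u<t\varphi\}}|\nabla u|^2$ is discarded in the local case. Second, the two $A_t\times L_t$ interaction integrals, one from $\mathcal B(u,w_t)$ and one from $I_{n,\sigma,\{u>0\}}[w_t]$, combine via the identity $2ab-a^2=b^2-(a-b)^2$ with $a=t\varphi(x)-u(y)$, $b=u(x)-u(y)$ (so $a-b=t\varphi(x)-u(x)$) into $\tfrac1t\iint_{A_t\times L_t}\frac{(u(x)-u(y))^2-(u(x)-t\varphi(x))^2}{|x-y|^{n+2\sigma}}\,\ud x\,\ud y$, and the proof is reduced to showing this has nonnegative $\liminf$. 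Heuristically its integrand is nonnegative — when $x$ is near the layer and $\varphi$ is continuous, $u(x)-u(y)\ge u(x)-t\varphi(x)\ge0$ — and I would make this rigorous by a dyadic decomposition in $|x-y|$: the part $|x-y|\ge\delta$ is controlled by $|L_t|\to0$ against the bounded far kernel, while the near-diagonal part is made uniformly small by taking $\delta$ small, using the near-diagonal integrability above together with $|\varphi(x)-\varphi(y)|\le\|\nabla\varphi\|_\infty|x-y|$ to bound the region where the integrand is negative. A minor extra technicality is that $\{u>0\}$ need not be open, so at a couple of points one first replaces it by slightly larger open sets and passes to the limit by monotone convergence, as in the proof of Lemma~\ref{lem:sobolevforus}.
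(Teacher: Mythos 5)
Your proof uses the same competitor $(u-t\varphi)^+$ as the paper, correctly dispatches the volume term and the $L^2$ normalization, and arrives at the same intermediate inequality $\mathscr{B}_u[u,w_t]-\tfrac12\,I_{n,\sigma,\{u>0\}}[w_t]\le\lambda\int_{\R^n}uw_t$ with $w_t=u-(u-t\varphi)^+$. Where you diverge from the paper is in how the left-hand side is expanded in $t$. You split $\{u>0\}$ into $A_t=\{u\ge t\varphi\}$ and the transition layer $L_t=\{0<u<t\varphi\}$, and then must show that the $A_t\times L_t$ cross integral, divided by $t$, has nonnegative $\liminf$; your recipe (a cutoff at $|x-y|=\delta$, Cauchy--Schwarz in $u$ near the diagonal, $|L_t|\to0$ far from it, and the Lipschitz modulus of $\varphi$ to confine the set where $(u(x)-u(y))^2-(u(x)-t\varphi(x))^2$ can be negative) does close --- after Cauchy--Schwarz the negative near-diagonal contribution is $O(|L_t|^{1/2}\delta^{1-\sigma}+t\,|L_t|\,\delta^{2-2\sigma})$ and the far part is $O(|L_t|\,\delta^{-n-2\sigma})$ --- but it is delicate, forces a restriction to Lipschitz $\varphi$ with a density step at the end, and requires you to notice (as you do) that only $\varphi(x)<\varphi(y)$ can produce a bad sign. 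The paper sidesteps the entire decomposition with a single pointwise algebraic inequality: writing $u=(u-t\varphi)^+-(u-t\varphi)^-+t\varphi$ and expanding $(u(x)-u(y))^2$ in these three pieces, all the cross-terms involving $(u-t\varphi)^-$ are manifestly nonnegative (products of positive and negative parts evaluated at the opposite points), so they can simply be dropped, yielding for every $x,y$ and every nonnegative $\varphi\in\mathring H^\sigma(\R^n)$
\[
(u(x)-u(y))^2 \ge (u_t(x)-u_t(y))^2 + 2t(\varphi(x)-\varphi(y))(u(x)-u(y)) - t^2(\varphi(x)-\varphi(y))^2.
\]
Integrating over $\{u>0\}^2$ gives at once $I_{n,\sigma,\{u>0\}}[u_t]\le I_{n,\sigma,\{u>0\}}[u]-2t\,\mathscr{B}_u[u,\varphi]+t^2 I_{n,\sigma,\{u>0\}}[\varphi]$, with a $t$-independent, finite coefficient $I_{n,\sigma,\{u>0\}}[\varphi]\le I_{n,\sigma,\R^n}[\varphi]$; combined with the parallel pointwise bound $u^2\le u_t^2+2t\varphi u$, dividing by $t$ and letting $t\to0$ ends the proof, with no cutoff in $|x-y|$, no transition-layer decomposition, and no reduction to smooth $\varphi$. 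In short, both arguments use the same competitor and extract the same first-order information, but the paper's pointwise convexity-type identity is substantially shorter, avoids the loss of integrability of $|x-y|^{-(n+2\sigma-1)}$ near the diagonal that you have to work around, and is worth reaching for before attempting a domain decomposition in problems of this type.
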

\begin{proof}
Let $\varphi\in\mathring H^\sigma(\R^n)$  be a nonnegative  function and $u_t=(u-t\varphi)^+$ for any small $t>0$. Then we have
\[
\frac{I_{n,\sigma,\{u>0\}}[u] }{\|u\|^2_{L^2(B_N)}}+|\{u>0\}| \le \frac{I_{n,\sigma,\{u_t>0\}}[u_t] }{\|u_t\|^2_{L^2(B_N)}}+|\{u_t>0\}|.
\]
Since $\{u_t>0\}\subset \{u>0\}$, we have
\be\label{eq:energyine1}
\frac{I_{n,\sigma,\{u>0\}}[u] }{\|u\|^2_{L^2(B_N)}} \le \frac{I_{n,\sigma,\{u_t>0\}}[u_t] }{\|u_t\|^2_{L^2(B_N)}}\le \frac{I_{n,\sigma,\{u>0\}}[u_t] }{\|u_t\|^2_{L^2(B_N)}}.
\ee
Since $u=(u-t\varphi)^+-(u-t\varphi)^-+t\varphi$, we obtain
\begin{align*}
&(u(x)-u(y))^2\\
&=(u_t(x)-u_t(y))^2 + \Big((u(x)-t\varphi(x))^--(u(y)-t\varphi(y))^-\Big)^2\\
&\quad+2 (u(y)-t\varphi(y))^+(u(x)-t\varphi(x))^-+2 (u(x)-t\varphi(x))^+(u(y)-t\varphi(y))^-\\
&\quad + 2t(\varphi(x)-\varphi(y)) (u(x)-u(y)) - t^2(\varphi(x)-\varphi(y))^2\\
&\ge (u_t(x)-u_t(y))^2+ 2t[\varphi(x)-\varphi(y)] [u(x)-u(y)] - t^2(\varphi(x)-\varphi(y))^2
\end{align*}
and
\begin{align*}
u^2&= u_t^2 + [(u-t\varphi)^-]^2+t^2\varphi^2 + 2t\varphi(u-t\varphi)\\
&\le  u_t^2 + t^2\varphi^2+t^2\varphi^2 + 2t\varphi(u-t\varphi)\\
&= u_t^2 +2t\varphi u.
\end{align*}
Applying these two inequalities in \eqref{eq:energyine1}, we obtain
\begin{align*}
&\|u\|^2_{L^2(B_N)} \iint_{\{u>0\}\times \{u>0\}}\frac{2t[\varphi(x)-\varphi(y)] [u(x)-u(y)] - t^2(\varphi(x)-\varphi(y))^2}{|x-y|^{n+2\sigma}}\,\ud x\ud y\\
&\le2t \int_{B_N}\varphi(x) u(x)\,\ud x \iint_{\{u>0\}\times \{u>0\}}\frac{(u(x)-u(y))^2}{|x-y|^{n+2\sigma}}\,\ud x\ud y.
\end{align*}
By canceling $t$ and sending $t\to 0$, we obtain
\[
\iint_{\{u>0\}\times \{u>0\}}\frac{(u(x)-u(y))(\varphi(x)-\varphi(y))}{|x-y|^{n+2\sigma}}\,\ud x\ud y\le \lambda\int_{B_N}u(x)\varphi(x)\,\ud x
\]
with $\lambda$ as in \eqref{eq:eigenvalue}.
\end{proof}

Next is a Cacciopoli inequality, based on $u$ being a subsolution of this nonlocal equation. Set
\begin{equation}\label{eq:blinearform0}
	\mathscr{B}_u[f, g] = \iint_{\{u > 0\}\times \{u > 0\}} \frac{(f(x) - f(y))(g(x) - g(y))}{|x - y|^{n + 2\sigma}}\,\ud x\ud y.
\end{equation}

\begin{lem}\label{lem:caccipolli1}
Let $0<r<\ell<R$, $\eta: \R^n \rightarrow [0, 1]$ be a smooth radial cutoff function which is $1$ on $B_{r}$, vanishes outside of $B_{\ell}$, and satisfies $|\nabla \eta| \leq 2/(\ell-r)$. Then
	\begin{align*}
		&\iint_{\R^n\times\R^n} \frac{ |u(x)\eta(x) - u(y)\eta(y)|^2}{|x - y|^{n + 2\sigma}} \\
		&\leq C \int_{B_{\ell}} u(x)\,\ud x \int_{\R^n\setminus B_{R}} \frac{u(y)}{|y-x|^{n + 2\sigma}} \,\ud y+ C \int_{B_{R}} u^2 +\frac{CR^{2-2\sigma}}{(R-r)^{2}} \int_{B_{R}} u^2
	\end{align*}
\end{lem}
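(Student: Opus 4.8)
The plan is to prove a Caccioppoli-type inequality for the nonlocal subsolution $u$ of \eqref{eq:minimizereq}, by testing the weak inequality \eqref{eq:minimizereqd} against the function $\varphi = u\eta^2$, where $\eta$ is the given cutoff. Note first that $\varphi \in \mathring H^\sigma(\R^n)$ is nonnegative and supported in $\{u>0\}\cap B_\ell$, so it is an admissible test function. Testing gives
\[
\mathscr{B}_u[u, u\eta^2] \le \lambda \int_{\R^n} u^2 \eta^2 \le \lambda \int_{B_\ell} u^2.
\]
The main work is then a purely algebraic manipulation of the left-hand side to extract the quantity $\iint |u(x)\eta(x) - u(y)\eta(y)|^2/|x-y|^{n+2\sigma}$ we want to bound, at the cost of commutator-type error terms involving $\nabla\eta$.

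The key algebraic identity is the standard one: for $a = u(x)$, $b = u(y)$, $s = \eta(x)$, $t = \eta(y)$, one has
\[
(a-b)(as^2 - bt^2) = (as - bt)^2 - ab(s-t)^2,
\]
so that, integrating against the kernel over $\{u>0\}\times\{u>0\}$,
\[
\iint_{\{u>0\}^2} \frac{(u(x)\eta(x) - u(y)\eta(y))^2}{|x-y|^{n+2\sigma}} = \mathscr{B}_u[u, u\eta^2] + \iint_{\{u>0\}^2} \frac{u(x)u(y)(\eta(x)-\eta(y))^2}{|x-y|^{n+2\sigma}}.
\]
The first term is $\le \lambda \int_{B_\ell} u^2$. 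For the second term, using $u(x)u(y) \le \tfrac12(u(x)^2 + u(y)^2)$ and symmetry, it is bounded by $\iint_{\{u>0\}^2} u(x)^2 (\eta(x)-\eta(y))^2/|x-y|^{n+2\sigma}$; since $\eta$ is supported in $B_\ell \subset B_R$ with $|\nabla\eta| \le 2/(\ell - r)$, one splits the inner $y$-integral into $|y-x| < R - r$ (where $(\eta(x)-\eta(y))^2 \le |\nabla\eta|^2|x-y|^2$ gives, after integrating in polar coordinates, a factor $\lesssim (\ell-r)^{-2}(R-r)^{2-2\sigma} \le C(R-r)^{2-2\sigma}/(\ell-r)^2$) and $|y-x| \ge R - r$ (where $(\eta(x)-\eta(y))^2 \le 1$ gives a factor $\lesssim (R-r)^{-2\sigma}$), and in both regimes $x$ must lie in $B_\ell$. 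This yields a bound of the form $C (R-r)^{2-2\sigma}(\ell-r)^{-2}\int_{B_\ell} u^2$, which is absorbed into the stated right-hand side. Finally, the left-hand side of the claimed inequality is the integral over $\R^n \times \R^n$, not over $\{u>0\}^2$; the difference comes from pairs where exactly one of $x,y$ lies in $\{u>0\}$, and there $u(x)\eta(x) - u(y)\eta(y)$ equals $u(x)\eta(x)$ (say $x \in \{u>0\}$, $y\notin$), with $x \in B_\ell$. Bounding $\eta \le 1$ and $(\eta(x))^2 u(x)^2 \le u(x)^2$, one is left with $2\int_{\{u>0\}\cap B_\ell} u(x)^2 \big(\int_{\R^n\setminus\{u>0\}} |x-y|^{-n-2\sigma}\,dy\big)\,dx$, and this is where one reintroduces a term like $\int_{B_\ell} u\,dx\int_{\R^n\setminus B_R}u(y)|y-x|^{-n-2\sigma}\,dy$ plus $C\int_{B_R}u^2$ after handling the near-field part $\{y : |y-x|<R\}\setminus\{u>0\}$ separately.

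The step I expect to be the main obstacle is reconciling the domain of integration: the clean algebraic identity lives on $\{u>0\}\times\{u>0\}$, whereas the target quantity and the structure of the final estimate (with its distinct "tail" term $\int_{B_\ell}u\int_{\R^n\setminus B_R}u/|y-x|^{n+2\sigma}$) live on all of $\R^n\times\R^n$. The subtlety is that one does \emph{not} want a term like $\int_{B_R}u^2 \cdot \mathrm{dist}(x,\partial\{u>0\})^{-2\sigma}$, which would not be controlled; instead one must carefully use that for $x \in B_r$ the complement of $\{u>0\}$ near $x$ is at least distance $R - r$ away only in the crudest bound, and it is really the region $\R^n \setminus B_R$ that produces the genuinely nonlocal tail term while the annular region $B_R \setminus \{u>0\}$ (at bounded distance) contributes at worst $C\int_{B_R}u^2$ after using $\int_{B_R\setminus B_{|x-y|}}$-type estimates — this bookkeeping, together with tracking the various radii $r < \ell < R$ and the precise powers $R^{2-2\sigma}/(R-r)^2$, is the delicate part. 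One also needs a density/approximation remark to justify testing with $u\eta^2$ (approximating $u$ by $C^\infty_c$ functions and passing to the limit using the bound in Lemma \ref{lem:sobolevforus}), but this is routine.
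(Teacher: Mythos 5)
Your algebraic route through the exact identity $(a-b)(as^2-bt^2) = (as-bt)^2 - ab(s-t)^2$ is a clean (and slightly tidier) version of the paper's chain of inequalities, and the test function and the rough shape of the estimate are the same as the paper's. However, there is a genuine gap in the final step, and it is precisely at the place you yourself flag as delicate. The error term $\iint_{\{u>0\}^2}u(x)u(y)(\eta(x)-\eta(y))^2|x-y|^{-n-2\sigma}$ handles fine (note, though, that after symmetrizing you cannot say "$x$ must lie in $B_\ell$": for $x\notin B_\ell$, $y\in B_\ell$ the integrand $u^2(x)\eta^2(y)|x-y|^{-n-2\sigma}$ is nonzero, and it is this far-field part that actually produces the tail term $\int_{B_\ell}u\int_{\R^n\setminus B_R}u(y)|x-y|^{-n-2\sigma}$, not the cross-term estimate you attempt afterward). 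The real problem is the passage from $\iint_{\{u>0\}^2}$ to $\iint_{\R^n\times\R^n}$. The difference is
\[
2\int_{\{u>0\}\cap B_\ell}(u\eta)^2(x)\int_{\R^n\setminus\{u>0\}}\frac{\ud y}{|x-y|^{n+2\sigma}}\,\ud x,
\]
and the inner integral over $y\in B_R\setminus\{u>0\}$ behaves like $\operatorname{dist}(x,\R^n\setminus\{u>0\})^{-2\sigma}$, which has \emph{no} a priori bound: $\{u>0\}$ is just a measurable set with positive density at $x$, and its complement can come arbitrarily close. Your proposed bound by $C\int_{B_R}u^2$ after "handling the near-field part separately" therefore does not go through — no amount of bookkeeping over the radii fixes this, because the singularity is generated by the geometry of $\{u>0\}$, not by the radii. (Also, this term has no $u(y)$ in it, since $u(y)=0$ there, so the tail term cannot emerge from it as you suggest.)

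The missing ingredient is the fractional Hardy inequality, already packaged in the paper as Lemma~\ref{lem:sobolevforus} (via Loss--Sloane and Lemma~\ref{lem:equivalentnorms}). The correct move is to stop once you have bounded the integral over $\{u>0\}^2$ (or over $(\{u>0\}\cap B_R)^2$, which suffices since $\eta u$ is supported there) by the right-hand side, and then apply Lemma~\ref{lem:sobolevforus} to the nonnegative function $w=\eta u\in\mathring H^\sigma(\R^n)$: since $\{w>0\}\subset\{u>0\}\cap B_\ell$, it gives
\[
\iint_{\R^n\times\R^n}\frac{|w(x)-w(y)|^2}{|x-y|^{n+2\sigma}}\le C\iint_{\{w>0\}^2}\frac{|w(x)-w(y)|^2}{|x-y|^{n+2\sigma}}\le C\iint_{(\{u>0\}\cap B_R)^2}\frac{|w(x)-w(y)|^2}{|x-y|^{n+2\sigma}},
\]
and the proof closes. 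This is exactly how the paper concludes ("Since $\eta u$ is supported in $\{u>0\}\cap B_R$, the conclusion follows from the Hardy inequality..."). You mention Lemma~\ref{lem:sobolevforus} only as a routine approximation tool for justifying the test function, but it is in fact the crucial structural input that resolves the very difficulty you identify.
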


\begin{proof}
 We use $\eta^2 u$ as a test function, to get that
	\[
		\mathscr{B}_u[u, \eta^2 u] \leq C \int_{\R^n} u^2 \eta^2 \leq C \int_{B_{\ell}} u^2.
	\]
	The quantity on the left may be subdivided as
	\[
		\mathscr{B}_u[u, \eta^2 u] = \left(\int_{(B_{R} \cap \{u > 0\})^2} + 2 \int_{B_{R} \cap \{u > 0\} \times \{u > 0\} \setminus B_{R}}\right)\frac{[u(x) - u(y)][\eta^2 u(x) - \eta^2 u(y)]}{|x - y|^{n + 2\sigma}}.
	\]
	
	The integral over the second region can be estimated using the fact that the support of $\eta$ is a distance $R-\ell$ from the complement of $B_{R}$:
	\begin{align*}
		&- \iint_{B_{R} \cap \{u > 0\} \times \{u > 0\} \setminus B_{R}} \frac{[u(x) - u(y)][\eta^2 u(x) - \eta^2 u(y)]}{|x - y|^{n + 2\sigma}} \\
		&\quad \leq C \int_{B_{R}}\eta^2 u(x)\,\ud x \int_{\{u > 0\} \setminus B_{R}} \frac{u(y)}{|x - y|^{n + 2\sigma}}\,\ud y\\
		&\quad\le C \int_{B_{\ell}} u(x)\,\ud x \int_{\R^n\setminus B_{R}} \frac{u(y)}{|y-x|^{n + 2\sigma}} \,\ud y.
	\end{align*}
	
	Let us focus on the much more difficult estimate on the other region. We claim that,
	\begin{align*}
		& |u(x) - u(y)|^2 (\eta^2(x) + \eta^2(y)) \\
		& \leq 4 [ (u(x) - u(y))(\eta^2 u(x)  - \eta^2 u(y)) + (u(x) + u(y))^2 (\eta(x) - \eta(y))^2].
	\end{align*}
	To see this, start with the more obvious identity
	\[
		|u(x) - u(y)|^2 \eta^2(x) = (u(x) - u(y))(\eta^2(x)u(x) - \eta^2(y) u(y)) - u(y)(u(x) - u(y))(\eta^2(x) - \eta^2(y)).
	\]
	Now reverse the role of $x$ and $y$, add, factor, and apply Cauchy's inequality to the last term:
	\begin{align*}
		&|u(x) - u(y)|^2 (\eta^2(x) + \eta^2(y)) \\& = 2 (u(x) - u(y))(\eta^2(x)u(x) - \eta^2(y) u(y)) - (u(y) + u(x))(u(x) - u(y))(\eta^2(x) - \eta^2(y))\\
		& \leq 2 (u(x) - u(y))(\eta^2(x)u(x) - \eta^2(y) u(y)) \\
		&\quad+ |u(x) + u(y)|^2 |\eta(x) - \eta(y)|^2 + \frac{1}{4} |u(x) - u(y)|^2 |\eta(x) + \eta(y)|^2\\
		& \leq 2 (u(x) - u(y))(\eta^2(x)u(x) - \eta^2(y) u(y)) \\
		&\quad+ 2 |u^2(x) + u^2(y)| |\eta(x) - \eta(y)|^2 + \frac{1}{2} |u(x) - u(y)|^2 |\eta^2(x) + \eta^2(y)|.
	\end{align*}
	Reabsorb the rightmost term and multiply by $2$ to recover the claimed inequality.
	
	Also, from Cauchy's inequality, we have
	\[
		|u(x)\eta(x) - u(y)\eta(y)|^2 \leq 2|u(x) - u(y)|^2 \eta^2(x) +2 |\eta(x) - \eta(y)|^2 u^2(y).
	\]
	Switching the role of $x$ and $y$, we obtain
	\[
		|u(x)\eta(x) - u(y)\eta(y)|^2 \leq |u(x) - u(y)|^2 (\eta^2(x)+\eta^2(y)) + |\eta(x) - \eta(y)|^2 (u^2(x)+u^2(y)).
	\]		
Therefore, we obtain
	\begin{align*}
		& |u(x)\eta(x) - u(y)\eta(y)|^2\\
		& \leq 5 [ (u(x) - u(y))(\eta^2 u(x)  - \eta^2 u(y)) + 2(u^2(x) + u^2(y)) (\eta(x) - \eta(y))^2].
	\end{align*}
		
	Using this, we immediately get that
	\begin{align*}
		&\iint_{(\{u>0\} \cap B_R) ^2} \frac{ |u(x)\eta(x) - u(y)\eta(y)|^2}{|x - y|^{n + 2\sigma}} \\
		&\leq 5 \iint_{(\{u>0\} \cap B_R) ^2}  \frac{(u(x) - u(y))(\eta^2 u(x) - \eta^2 u(y))}{|x - y|^{n + 2\sigma}} \\
		&\quad+ C (R-r)^{-2} \int_{\{u>0\} \cap B_R} u^2(x) \int_{\{u>0\} \cap B_R} |x - y|^{2 - n -2\sigma} dy dx.
	\end{align*}
	For the second term on the right, we have
	\[
		C (R-r)^{-2} \int_{\{u>0\} \cap B_R} u^2(x) \int_{\{u>0\} \cap B_R} |x - y|^{2 - n -2\sigma} dy dx \leq \frac{CR^{2-2\sigma}}{(R-r)^{2}} \int_{B_{R}} u^2.
	\]

	Combining with our earlier estimates, we obtain
	\begin{align*}
		&\iint_{(\{u>0\} \cap B_R) ^2} \frac{ |u(x)\eta(x) - u(y)\eta(y)|^2}{|x - y|^{n + 2\sigma}} \\
		&\leq C \int_{B_{\ell}} u(x)\,\ud x \int_{\R^n\setminus B_{R}} \frac{u(y)}{|y-x|^{n + 2\sigma}} \,\ud y+ C \int_{B_{R}} u^2 +\frac{CR^{2-2\sigma}}{(R-r)^{2}} \int_{B_{R}} u^2.
	\end{align*}
Since $\eta u$ is supported $\{u>0\} \cap B_R$, then the conclusion follows from the Hardy inequality in Lemma \ref{lem:equivalentnorms} (and also Lemma \ref{lem:sobolevforus}).
\end{proof}

We can now obtain a local bound for minimizers, similar to Theorem 1.1 of Di Castro - Kuusi - Palatucci \cite{CKP}. Their work concerns (sub)solutions to the \emph{full} fractional Laplacian $(- \Delta)^\sigma_{\R^n}$. In our setting, $(-\Delta)^\sigma_{\R^n} u > (-\Delta)^\sigma_{\{u > 0\}} u$, so these results can not applied directly.

\begin{lem}\label{lem:upperbound}
Let $0<R\le 1$. Then there exists $C=C(n,\sigma)$ such that 
\[
\sup_{B_{R/2}} u\le \va R^{2\sigma} \int_{\R^n} \frac{u(y)}{(R+|y|)^{n + 2\sigma}} \,\ud y+C\va^{-\frac{n}{4\sigma}} R^{-\frac{n}{2}}\|u\|_{L^2(B_R)}
\]
for all $\va\in (0,1]$.
\end{lem}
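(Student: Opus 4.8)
The plan is to run a De Giorgi–type iteration on super-level sets of $u$, using the Caccioppoli inequality of Lemma~\ref{lem:caccipolli1} at each step on shrinking balls, together with the fractional Sobolev inequality on $\R^n$ (available via Lemma~\ref{lem:sobolevforus}, since $u\ge0$) to gain integrability. The extra feature compared to the classical argument of Di Castro–Kuusi–Palatucci is that $u$ is only a subsolution of the \emph{regional} equation $(-\Delta)^\sigma_{\{u>0\}}u\le\lambda u$ from Lemma~\ref{lem:minimizereq}, and the ``tail'' term is controlled not by a nonlocal $L^1$ tail of $u$ over all of $\R^n$ in the usual Gagliardo seminorm, but by the quantity $\int_{\R^n\setminus B_R}u(y)|y-x|^{-n-2\sigma}\,\ud y$ appearing on the right side of Lemma~\ref{lem:caccipolli1}. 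Fortunately that is precisely the kind of tail we want, so it passes through the iteration essentially as a bounded constant once we fix the geometry.

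Here are the steps I would carry out. First, fix $R\le1$, set $k_j=M(1-2^{-j})$ for a constant $M$ to be chosen, let $w_j=(u-k_j)^+$, and choose radii $R_j=\tfrac R2(1+2^{-j})$ decreasing from $R$ to $R/2$, with cutoffs $\eta_j$ equal to $1$ on $B_{R_{j+1}}$, supported in $B_{(R_j+R_{j+1})/2}$, and $|\nabla\eta_j|\le C2^j/R$. Second, observe that $w_j$ is also a subsolution of the same regional equation on $\{u>0\}$ (truncation from above preserves subsolutions for this operator — this needs a short verification using the monotonicity of $t\mapsto(t-k)^+$, just as in the local case), so Lemma~\ref{lem:caccipolli1} applies to $w_j$ with the stated cutoffs. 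Third, apply the fractional Sobolev inequality to $\eta_jw_j$ to get $\|\eta_jw_j\|_{L^{2^*}}^2\lesssim[\eta_jw_j]_{\dot H^\sigma(\R^n)}^2$ with $2^*=\frac{2n}{n-2\sigma}$, and bound the right side by the Caccioppoli estimate. Fourth, use Chebyshev and Hölder: on $B_{R_{j+1}}$ one has $w_{j+1}>0$ only where $w_j>k_{j+1}-k_j=M2^{-j-1}$, so $|\{w_{j+1}>0\}\cap B_{R_{j+1}}|\lesssim (M2^{-j})^{-2}\int w_j^2$, and then interpolating $\int_{B_{R_{j+1}}}w_{j+1}^2\le\|w_{j+1}\|_{L^{2^*}}^2|\{w_{j+1}>0\}\cap B_{R_{j+1}}|^{2\sigma/n}$ and similarly for the $\int w_j\,\ud x$ and $\int w_j^2$ terms. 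Setting $A_j=R^{-n}\int_{B_{R_j}}w_j^2$ (suitably normalized), this yields a recursion $A_{j+1}\le C\,b^{\,j}\,A_j^{1+\delta}$ for some $b>1$ and $\delta=2\sigma/n>0$, plus a contribution from the tail term which, after absorbing, forces $M$ to include a term like $\va R^{2\sigma}\int_{\R^n}u(y)(R+|y|)^{-n-2\sigma}\,\ud y$. Fifth, invoke the standard fast-geometric-convergence lemma: $A_j\to0$ provided $A_0\le C^{-1/\delta}b^{-1/\delta^2}$, which is arranged by taking $M= \va R^{2\sigma}\int u(y)(R+|y|)^{-n-2\sigma}\ud y + C\va^{-n/4\sigma}R^{-n/2}\|u\|_{L^2(B_R)}$; then $A_j\to0$ gives $u\le M$ a.e.\ on $B_{R/2}$, which is the claim.

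The bookkeeping of the $\va$-dependence deserves care: the tail term in Lemma~\ref{lem:caccipolli1} enters \emph{linearly} in $w_j$ (it is $\int_{B_\ell}w_j\,\ud x\cdot(\text{tail})$), not quadratically, so when I homogenize the recursion I must split the threshold $M$ additively into a ``tail part'' and an ``$L^2$ part'' and track how each propagates; this is the source of both the $\va R^{2\sigma}(\text{tail})$ summand (multiplicatively small, coming from choosing the cutoff geometry so the tail contribution to $A_0$ is $\lesssim\va^2$) and the $\va^{-n/4\sigma}$ blow-up (the price of making $A_0$ small enough to start the iteration: the Sobolev/Chebyshev step costs a power of $|\{w_0>0\}\cap B_R|^{2\sigma/n}$, and to beat the constant $C b^{1/\delta^2}$ one pays $\va^{-n/4\sigma}$).

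I expect the main obstacle to be precisely this nonlocal tail term and its interaction with the truncation. In the classical local De Giorgi iteration the right-hand side is quadratic in $w_j$ and the recursion closes cleanly; here the linear tail term could in principle dominate and break the super-linear recursion $A_{j+1}\le Cb^jA_j^{1+\delta}$. The resolution is to treat it as an inhomogeneous forcing term of fixed size $\le\va^2$ times $A_j^{\text{(linear power)}}$ and to check — using that $\int_{\R^n\setminus B_R}w_j(y)|y-x|^{-n-2\sigma}\ud y\le \int_{\R^n\setminus B_R}u(y)|y-x|^{-n-2\sigma}\ud y\lesssim R^{-2\sigma}\cdot\va^{-1}M$ by the choice of $M$, and that $|x|\le R$ forces $|y-x|\gtrsim R+|y|$ up to a constant — that this forcing is small compared to the threshold $M$, so that the iteration still converges. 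Verifying that the regional-operator subsolution property survives upward truncation, and that Lemma~\ref{lem:caccipolli1} as stated (with its specific radii $r<\ell<R$) can be applied to $w_j$ rather than $u$ on the correct nested balls, are the two other points where I would be careful but do not anticipate genuine difficulty.
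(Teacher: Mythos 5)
Your proposal follows essentially the same route as the paper's proof: a De Giorgi iteration over nested balls $B_{R_k}$ with truncation levels $L_k=M(1-2^{-k})$, combining the Caccioppoli estimate, the Sobolev embedding from Lemma \ref{lem:sobolevforus}, and Chebyshev's inequality to produce a super-linear recursion for $\widetilde U_k := R^{-n/2}\|(u-L_k)^+\|_{L^2(B_{R_k})}$, and then choosing the threshold $M=\varepsilon R^{2\sigma}\int_{\R^n}u(y)(R+|y|)^{-n-2\sigma}\,\ud y + C\varepsilon^{-n/4\sigma}R^{-n/2}\|u\|_{L^2(B_R)}$ so that both the tail factor $\sqrt{A/M}$ and the geometric-convergence criterion are satisfied. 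One small imprecision to flag: Lemma \ref{lem:caccipolli1} cannot be invoked for $w_j=(u-L)^+$ as a black box, since the right-hand side of the subsolution inequality is $\lambda u$ rather than $\lambda(u-L)^+$; one must rerun the proof with test function $\eta^2(u-L)^+$ in $u$'s own inequality, which produces an extra term $L\int\eta^2(u-L)^+$ on the right — the paper controls it by $L_{k+1}\int(u-L_{k+1})^+\le (M/(L_{k+1}-L_k))\int[(u-L_k)^+]^2\le 2^{k+1}U_k^2$, so it feeds harmlessly into the recursion.
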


\begin{proof}
Let $L>0$,  $0<r<\ell<R$, $\eta: \R^n \rightarrow [0, 1]$ be a radial cutoff function which is $1$ on $B_{r}$, vanishes outside of $B_{\ell}$, and satisfies $|\nabla \eta| \leq 2/(\ell-r)$. As in the proof of Lemma \ref{lem:caccipolli1}, we use $\eta^2(u-L)^+$ as the test function. Then we have
\[
\mathscr{B}_u[u,\eta^2(u-L)^+] \le \int_{\R^n} \eta^2 u (u-L)^+\le \int_{\R^n} \eta^2  [(u-L)^+]^2+L\int_{\R^n} \eta^2  (u-L)^+.
\]
Using
\[
u(x)-u(y)=(u(x)-L)^+-(u(y)-L)^--(u(x)-L)^-+(u(y)-L)^-,
\]
one has
\[
\mathscr{B}_u[(u-L)^+,\eta^2(u-L)^+] \le \mathscr{B}_u[u,\eta^2(u-L)^+].
\]
Following from the proof of Lemma \ref{lem:caccipolli1}, one has
\begin{align*}
\|\eta (u-L)^+\|_{\mathring H^\sigma(\R^n)}^2 
&\leq   C \int_{B_{\ell}} (u(x)-L)^+\,\ud x \int_{\R^n\setminus B_{R}} \frac{(u(y)-L)^+}{|y-x|^{n + 2\sigma}} \,\ud y + C \int_{B_{R}} [(u-L)^+]^2 \\
&\quad+\frac{CR^{2-2\sigma}}{(R-r)^{2}} \int_{B_{R}}  [(u-L)^+]^2+ C L\int_{B_{R}} [(u-L)^+].
\end{align*}
By the Sobolev inequality, we have
\begin{align*}
\|(u-L)^+\|_{L^{\frac{2n}{n-2\sigma}}(B_r)}^2 
&\leq   C \int_{B_{\ell}} (u(x)-L)^+\,\ud x \int_{\R^n\setminus B_{R}} \frac{(u(y)-L)^+}{|y-x|^{n + 2\sigma}} \,\ud y + C \int_{B_{R}} [(u-L)^+]^2 \\
&\quad+\frac{CR^{2-2\sigma}}{(R-r)^{2}} \int_{B_{R}}  [(u-L)^+]^2+ C L\int_{B_{R}} [(u-L)^+].
\end{align*}
For $k=0,1,2,\cdots$, let $L_k=(1-2^{-k})M$, $R_k=\frac{1}{2}(1+2^{-k})R$, where $M>0$ to be fixed in the end,  $\ell_k=\frac{1}{2}(R_k+R_{k+1})$,  and
\[
U_k=\|(u-L_k)^+\|_{L^{2}(B_{R_k})}. 
\]
We have
\begin{align*}
U_{k+1}&\le \|(u-L_{k+1})^+\|_{L^{\frac{2n}{n-2\sigma}}(B_{R_{k+1}})} |\{u>L_{k+1}\}\cap B_{R_{k+1}}|^{\frac{\sigma}{n}}\\
&\le \|(u-L_{k+1})^+\|_{L^{\frac{2n}{n-2\sigma}}(B_{R_{k+1}})} (U_k 2^{k+1}/M)^\frac{2\sigma}{n}.
\end{align*}
Also,
\begin{align*}
\int_{B_{R_k}} [(u-L_{k+1})^+]^2&\le U_k^2,\\
 L_{k+1}\int_{B_{R_k}} [(u-L_{k+1})^+]&\le \frac{M}{L_{k+1}-L_k} \int_{B_{R_k}} [(u-L_{k})^+]^2\le 2^{k+1}U_k^2,
\end{align*}
and
\begin{align*}
& \int_{B_{\ell_k}} (u(x)-L_{k+1})^+\,\ud x \int_{\R^n\setminus B_{R_k}} \frac{(u(y)-L_{k+1})^+}{|y-x|^{n + 2\sigma}} \,\ud y\\
 &\le \frac{2^{k+1}}{M} U_k^2 \ 2^{(k+3)(n+2\sigma)} \int_{\R^n} \frac{u(y)}{(R+|y|)^{n + 2\sigma}} \,\ud y.
\end{align*}
Denote 
\[
A=R^{2\sigma} \int_{\R^n} \frac{u(y)}{(R+|y|)^{n + 2\sigma}} \,\ud y,\quad\mbox{and}\quad \widetilde U_k=R^{-\frac{n}{2}}U_k.
\]
Then we obtain 
\begin{align*}
\widetilde U_{k+1}
&\leq   (C^k  \widetilde U_k)^{1+\frac{2\sigma}{n}}\left(\sqrt{\frac{A}{M}}+1\right)M^{-2\sigma/n}.
\end{align*}
Taking
\[
M\ge \va A, 
\]
then
\begin{align*}
\widetilde U_{k+1}
&\leq   (C^k  \widetilde U_k)^{1+\frac{2\sigma}{n}}\va^{-1/2}M^{-2\sigma/n}.
\end{align*}
Hence $\widetilde U_{k}\to 0$ as $k\to\infty$ provided that
\[
M\ge C\va^{-\frac{n}{4\sigma}}  \widetilde U_0.
\]
Hence, we can take 
\[
M= \va A+C\va^{-\frac{n}{4\sigma}}\widetilde U_0,
\]
and thus, 
\[
\sup_{B_{R/2}} u\le \va R^{2\sigma} \int_{\R^n} \frac{u(y)}{(R+|y|)^{n + 2\sigma}} \,\ud y+C\va^{-\frac{n}{4\sigma}} R^{-\frac{n}{2}}\|u\|_{L^2(B_R)}.
\]
\end{proof}

\begin{rem}
The $\varepsilon$ in Lemma \ref{lem:upperbound} interpolates between the local and nonlocal terms. It plays an essential role when we prove a lower bound for $u$ in Section \ref{sec:lowerbound}.
\end{rem}

Consequently, we have

\begin{thm}\label{thm:Linfinitybound}
There exists a positive constant $C(n,\sigma)$ such that
\[
\|u\|_{L^\infty(\R^n)}\le C(n,\sigma) \|u\|_{L^2(\R^n)}.
\]
\end{thm}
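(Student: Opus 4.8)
The plan is to upgrade the local $L^\infty$-$L^2$ bound of Lemma \ref{lem:upperbound} into a global one by a covering/iteration argument, exploiting the fact that $u$ is supported in the fixed ball $B_N$ and, more importantly, that all length scales appearing in Lemma \ref{lem:upperbound} can be taken of order one. First I would fix $R=1$ in Lemma \ref{lem:upperbound}: for every $x_0\in\R^n$, translating the estimate to be centered at $x_0$ gives
\[
\sup_{B_{1/2}(x_0)} u\le \va \int_{\R^n}\frac{u(y)}{(1+|y-x_0|)^{n+2\sigma}}\,\ud y+C\va^{-\frac{n}{4\sigma}}\|u\|_{L^2(B_1(x_0))}.
\]
The second term on the right is already controlled by $C\va^{-n/(4\sigma)}\|u\|_{L^2(\R^n)}$, which is of the desired form once $\va$ is fixed. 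The first term is a weighted average of $u$; the key observation is that this weight is integrable, so by Cauchy--Schwarz (or simply bounding $u$ pointwise by $\|u\|_{L^\infty(\R^n)}$ on the tail and by a local quantity near $x_0$) it can be absorbed.

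More precisely, the core step is to set $\Lambda:=\|u\|_{L^\infty(\R^n)}$, which is finite by Lemma \ref{lem:upperbound} applied with any fixed $\va$ on a finite cover of $\overline{B_N}$ (here one uses that $u$ is supported in $B_N$, so only finitely many unit balls matter). Then estimate the nonlocal term by splitting $\R^n$ into $B_1(x_0)$ and its complement:
\[
\int_{\R^n}\frac{u(y)}{(1+|y-x_0|)^{n+2\sigma}}\,\ud y\le \int_{B_1(x_0)} u(y)\,\ud y+\Lambda\int_{\R^n\setminus B_1(x_0)}\frac{\ud y}{(1+|y-x_0|)^{n+2\sigma}}\le C\|u\|_{L^2(\R^n)}+C(n,\sigma)\,\Lambda.
\]
Plugging back in and taking the supremum over all $x_0$ gives
\[
\Lambda\le C\va\,\Lambda+C\va^{-\frac{n}{4\sigma}}\|u\|_{L^2(\R^n)}.
\]
Choosing $\va=\va(n,\sigma)$ small enough that $C\va\le \tfrac12$, we absorb $\tfrac12\Lambda$ into the left-hand side and conclude $\Lambda\le C(n,\sigma)\|u\|_{L^2(\R^n)}$.

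The main obstacle — and the only place any care is needed — is the a priori finiteness of $\Lambda=\|u\|_{L^\infty(\R^n)}$, since the absorption argument is vacuous if $\Lambda=\infty$. This is handled by first running Lemma \ref{lem:upperbound} with a crude (any) choice of $\va$, say $\va=1$: on each ball $B_1(x_0)$ with $x_0$ in a finite collection covering $\overline{B_N}$, the right-hand side is finite because $u\in L^\infty(B_1(x_0))$ is \emph{not} assumed, but rather $\sup_{B_{1/2}(x_0)}u$ is bounded by $\int u(y)(1+|y-x_0|)^{-n-2\sigma}\,\ud y+C\|u\|_{L^2(B_1)}$, and the first integral is finite because $u\in L^1(\R^n)$ (as $u\in L^2$ with compact support). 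Hence $u$ is bounded on a neighborhood of its support, so $\Lambda<\infty$, and $u\equiv 0$ outside $B_N$ makes the global supremum finite. With $\Lambda<\infty$ established, the absorption step above is rigorous. One should also remark that the constant $C(n,\sigma)$ does not depend on $N$: every ingredient (Lemma \ref{lem:upperbound}, the integrability of the tail weight, Cauchy--Schwarz) is $N$-independent, and the covering argument is only used qualitatively to know $\Lambda<\infty$, not quantitatively.
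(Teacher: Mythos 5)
Your proof is correct, and it follows essentially the same route as the paper: apply Lemma \ref{lem:upperbound} with $R=1$, control the nonlocal term, and take the supremum over centers. The paper, however, dispatches the nonlocal term in a single stroke by Hölder's inequality: since $2(n+2\sigma)>n$, the kernel $(1+|y|)^{-n-2\sigma}$ belongs to $L^2(\R^n)$, and so
\[
\int_{\R^n}\frac{u(y)}{(1+|y-x_0|)^{n+2\sigma}}\,\ud y \le \|u\|_{L^2(\R^n)}\left(\int_{\R^n}\frac{\ud y}{(1+|y|)^{2(n+2\sigma)}}\right)^{1/2}= C(n,\sigma)\,\|u\|_{L^2(\R^n)}.
\]
With this observation one can take $\va=1$ and conclude directly; there is no $\Lambda$-term to absorb and hence no need to first establish $\Lambda<\infty$ qualitatively. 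Your near-field/far-field split and the subsequent absorption are correct but do extra work: they reintroduce $\|u\|_{L^\infty}$ on the right-hand side precisely to avoid Hölder on the tail, forcing you to (i) choose $\va$ small and (ii) run the finite-cover argument to first see that $\Lambda<\infty$. Both of those steps are sound, but they are avoidable once one notices the kernel is square-integrable. The $N$-independence you point out is genuine and holds in both arguments, though in the paper's version it is immediate rather than needing a remark.
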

\begin{proof}
This follows from Lemma \ref{lem:upperbound} and the H\"older inequality.
\end{proof}

\begin{lem}\label{lem:minimizereq2}
Let $u\in \mathring H^\sigma(\R^n)$ be a nonnegative minimizer of $m(N)$. Then it satisfies
\begin{equation}\label{eq:minimizereq2}
(-\Delta)^\sigma_{\{u>0\}} u\ge \lambda u\quad\mbox{in } \{u>0\}
\end{equation}
in the weak sense, with $\lambda$ as in \eqref{eq:eigenvalue}. In other words,
\begin{equation}\label{eq:minimizereqd2}
\iint_{\{u>0\}\times \{u>0\}}\frac{(u(x)-u(y))(\varphi(x)-\varphi(y))}{|x-y|^{n+2\sigma}}\,\ud x\ud y\ge \lambda\int_{\R^n}u(x)\varphi(x)\,\ud x
\end{equation}
for every nonnegative function $\varphi\in \mathring H^\sigma(\R^n)$ such that $|\supp \varphi \setminus \{u > 0\}| = 0$. 
\end{lem}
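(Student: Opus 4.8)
The plan is to test the minimality of $u$ against the competitor $u_t = u + t\varphi$ for small $t > 0$, where $\varphi \geq 0$ has support contained in $\{u > 0\}$ up to a null set. The crucial structural point is that, since $\supp\varphi \subset \{u > 0\}$ (modulo a null set) and $t > 0$, we have $\{u_t > 0\} = \{u > 0\}$ up to a set of measure zero: adding a nonnegative function to $u$ cannot destroy positivity, and it cannot create new positivity outside $\supp\varphi \subset \{u>0\}$. Hence the volume term $|\{u_t > 0\}|$ is unchanged, and the domain of integration $\{u_t > 0\} \times \{u_t > 0\}$ in the Dirichlet form equals $\{u > 0\} \times \{u > 0\}$. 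This is exactly the feature that makes the inequality go in the opposite direction from Lemma \ref{lem:minimizereq}: there, the competitor $(u - t\varphi)^+$ could only shrink the support, so the volume term helped; here the support is frozen, so there is no volume gain and we get a clean one-sided Euler--Lagrange inequality.

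The key steps, in order, are as follows. First, record that $u_t \in \mathring H^\sigma(\R^n)$ (since $u, \varphi$ both are), that $u_t \geq 0$, and that $\{u_t > 0\} = \{u>0\}$ up to null sets by the argument above; in particular $u_t$ is an admissible competitor for $m(N)$ with $|\{u_t>0\}| = |\{u>0\}|$. Second, write out
\[
\frac{I_{n,\sigma,\{u>0\}}[u]}{\|u\|^2_{L^2(B_N)}} \leq \frac{I_{n,\sigma,\{u_t>0\}}[u_t]}{\|u_t\|^2_{L^2(B_N)}} = \frac{I_{n,\sigma,\{u>0\}}[u_t]}{\|u_t\|^2_{L^2(B_N)}},
\]
using that the volume terms cancel. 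Third, expand the numerator, $I_{n,\sigma,\{u>0\}}[u_t] = I_{n,\sigma,\{u>0\}}[u] + 2t\,\mathscr{B}_u[u,\varphi] + t^2 \mathscr{B}_u[\varphi,\varphi]$, and the denominator, $\|u_t\|^2_{L^2(B_N)} = \|u\|^2_{L^2} + 2t\int_{B_N} u\varphi + t^2\int_{B_N}\varphi^2$. Fourth, clear denominators in the inequality: multiplying through by $\|u\|^2_{L^2}\|u_t\|^2_{L^2}$ and cancelling the common term $I_{n,\sigma,\{u>0\}}[u]\,\|u\|^2_{L^2}$ yields, after dividing by $2t$,
\[
\|u\|^2_{L^2(B_N)}\,\mathscr{B}_u[u,\varphi] + O(t) \;\geq\; \Big(\int_{B_N} u\varphi\Big) I_{n,\sigma,\{u>0\}}[u] + O(t).
\]
Fifth, let $t \to 0^+$ and divide by $\|u\|^2_{L^2(B_N)}$ to obtain $\mathscr{B}_u[u,\varphi] \geq \lambda \int_{\R^n} u\varphi$ with $\lambda$ as in \eqref{eq:eigenvalue}, which is precisely \eqref{eq:minimizereqd2}. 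Finally, pass from $C^1_c$ or smooth compactly supported $\varphi$ to general nonnegative $\varphi \in \mathring H^\sigma(\R^n)$ with $|\supp\varphi \setminus \{u>0\}| = 0$ by a density/truncation argument, using Lemma \ref{lem:sobolevforus} to control the relevant norms.

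I do not expect any single step to be a genuine obstacle — this is the ``easy half'' of the Euler--Lagrange derivation — but the one point that needs care is the claim $\{u_t > 0\} = \{u > 0\}$ up to null sets, and more precisely that the Dirichlet form over $\{u_t>0\}\times\{u_t>0\}$ really does coincide with that over $\{u>0\}\times\{u>0\}$. One must be slightly careful because $\{u > 0\}$ is only defined up to a null set and need not be open, but since the integrand is a fixed measurable function and the two domains differ by a null set, the integrals agree; the outer-regularity/open-neighborhood device from the proof of Lemma \ref{lem:sobolevforus} can be invoked if one wants to stay within genuinely open sets. A secondary minor point is justifying the $O(t)$ error terms and the final density argument, both of which are routine given the bound $I_{n,\sigma,\{u>0\}}[\varphi] \leq I_{n,\sigma,\{u>0\}}[u]$-type estimates and Lemma \ref{lem:sobolevforus}.
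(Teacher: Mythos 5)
Your proposal is correct and follows essentially the same argument as the paper: take $u_t = u + t\varphi$ for small $t>0$, observe that $\{u_t>0\} = \{u>0\}$ up to null sets (so the volume terms and the integration domain are frozen), cross-multiply the Rayleigh-quotient inequality, cancel the leading terms, divide by $t$, and send $t\to 0^+$. The concluding density/truncation remark is unnecessary — the computation works directly for any nonnegative $\varphi\in\mathring H^\sigma(\R^n)$ with $|\supp\varphi\setminus\{u>0\}|=0$ — but it does no harm.
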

\begin{proof}
Let $\varphi\in \mathring H^\sigma(\R^n)$ such that $|\supp \varphi \setminus \{u > 0\}| = 0$,  and $u_t=u+t\varphi$ for any small $t>0$. Then we have
\[
\frac{I_{n,\sigma,\{u>0\}}[u] }{\|u\|^2_{L^2(B_N)}}+|\{u>0\}| \le \frac{I_{n,\sigma,\{u_t>0\}}[u_t] }{\|u_t\|^2_{L^2(B_N)}}+|\{u_t>0\}|.
\]
Since $\{u_t>0\}= \{u>0\}$, we have
\be\label{eq:energyine2}
\frac{I_{n,\sigma,\{u>0\}}[u] }{\|u\|^2_{L^2(B_N)}}\le \frac{I_{n,\sigma,\{u>0\}}[u_t] }{\|u_t\|^2_{L^2(B_N)}}.
\ee
Then
\begin{align*}
&\int_{\{u>0\}} (2tu\varphi+t^2\varphi^2)\cdot  \iint_{\{u>0\}\times \{u>0\}}\frac{(u(x)-u(y))^2}{|x-y|^{n+2\sigma}}\,\ud x\ud y\\
&\le \|u\|^2_{L^2(B_N)} \iint_{\{u>0\}\times \{u>0\}}\frac{2t[\varphi(x)-\varphi(y)] [u(x)-u(y)] + t^2(\varphi(x)-\varphi(y))^2}{|x-y|^{n+2\sigma}}\,\ud x\ud y.
\end{align*}
By canceling $t$ and sending $t\to 0$, we obtain
\[
\iint_{\{u>0\}\times \{u>0\}}\frac{(u(x)-u(y))(\varphi(x)-\varphi(y))}{|x-y|^{n+2\sigma}}\,\ud x\ud y\ge \lambda\int_{B_N}u(x)\varphi(x)\,\ud x
\]
with $\lambda$ as in \eqref{eq:eigenvalue}.
\end{proof}

The following corollary follows from Lemma \ref{lem:minimizereq} and Lemma \ref{lem:minimizereq2}, making precise the fact that a minimizer $u$ solves the eigenvalue equation on its domain of positivity.

\begin{cor}\label{cor: equation}
Let $u\in \mathring H^\sigma(\R^n)$ be a nonnegative minimizer of $m(N)$.  Let $\varphi\in \mathring H^\sigma(\R^n)$ such $|\{ \varphi \neq 0 \} \setminus \{ u > 0 \}| = 0$. Then
\begin{equation}\label{eq:minimizerequal}
\iint_{\{u>0\}\times \{u>0\}}\frac{(u(x)-u(y))(\varphi(x)-\varphi(y))}{|x-y|^{n+2\sigma}}\,\ud x\ud y= \lambda\int_{\R^n}u(x)\varphi(x)\,\ud x
\end{equation}
where $\lambda$ as in \eqref{eq:eigenvalue}.
\end{cor}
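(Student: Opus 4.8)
The plan is to derive \eqref{eq:minimizerequal} by combining the two one-sided inequalities already established. Given $\varphi \in \mathring H^\sigma(\R^n)$ with $|\{\varphi \neq 0\} \setminus \{u > 0\}| = 0$, I would first decompose it into its positive and negative parts, $\varphi = \varphi^+ - \varphi^-$, and note that each of $\varphi^\pm$ is a nonnegative function in $\mathring H^\sigma(\R^n)$ (the positive and negative parts of an $\mathring H^\sigma$ function remain in $\mathring H^\sigma$, since $\||\varphi(x)| - |\varphi(y)|\| \le |\varphi(x) - \varphi(y)|$ and similarly for the truncation at zero). Moreover $\{\varphi^\pm \neq 0\} \subset \{\varphi \neq 0\}$, so $|\supp \varphi^\pm \setminus \{u > 0\}| = 0$ as well, which means both $\varphi^+$ and $\varphi^-$ are admissible test functions in both Lemma \ref{lem:minimizereq2} (the supersolution inequality \eqref{eq:minimizereqd2}, which requires the support condition) and Lemma \ref{lem:minimizereq} (the subsolution inequality \eqref{eq:minimizereqd}, which holds for all nonnegative test functions).

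Next, applying \eqref{eq:minimizereqd} with test function $\varphi^+$ gives $\mathscr{B}_u[u, \varphi^+] \le \lambda \int_{\R^n} u \varphi^+$, while applying \eqref{eq:minimizereqd2} with the same $\varphi^+$ gives the reverse inequality $\mathscr{B}_u[u, \varphi^+] \ge \lambda \int_{\R^n} u \varphi^+$; hence $\mathscr{B}_u[u, \varphi^+] = \lambda \int_{\R^n} u \varphi^+$. The identical argument applied to $\varphi^-$ yields $\mathscr{B}_u[u, \varphi^-] = \lambda \int_{\R^n} u \varphi^-$. Subtracting these two identities and using the bilinearity of $\mathscr{B}_u[\cdot, \cdot]$ in its second argument (and of the pairing $\int_{\R^n} u \,\cdot\,$) gives
\[
\iint_{\{u>0\}\times\{u>0\}} \frac{(u(x)-u(y))(\varphi(x)-\varphi(y))}{|x-y|^{n+2\sigma}}\,\ud x\ud y = \lambda \int_{\R^n} u(x)\varphi(x)\,\ud x,
\]
which is \eqref{eq:minimizerequal}.

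I do not anticipate a serious obstacle here: the only point requiring a moment of care is verifying that $\varphi^\pm$ are genuinely admissible, i.e. that they lie in $\mathring H^\sigma(\R^n)$ and that their supports are contained (up to null sets) in $\{u > 0\}$ so that the support hypothesis of Lemma \ref{lem:minimizereq2} is met. Both facts are elementary — the first by the standard truncation inequality for Gagliardo seminorms combined with Lemma \ref{lem:sobolevforus}, and the second because passing from $\varphi$ to $\varphi^\pm$ only shrinks the set where the function is nonzero. Everything else is a purely algebraic combination of the two lemmas, exploiting that one gives $\le$ for arbitrary nonnegative tests and the other gives $\ge$ for nonnegative tests supported in $\{u>0\}$.
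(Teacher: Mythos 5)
Your proposal is correct and is precisely the argument the paper intends but leaves implicit: decompose $\varphi = \varphi^+ - \varphi^-$, observe that each part is a nonnegative element of $\mathring H^\sigma(\R^n)$ (by the truncation inequality for Gagliardo seminorms together with the Sobolev embedding and weak compactness) whose measure-theoretic support still lies in $\{u>0\}$, apply Lemma \ref{lem:minimizereq} and Lemma \ref{lem:minimizereq2} to $\varphi^+$ and to $\varphi^-$ to obtain equality for each, and subtract using bilinearity. The paper states only that the corollary ``follows from Lemma \ref{lem:minimizereq} and Lemma \ref{lem:minimizereq2},'' and this is exactly how.
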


\section{A bound from below and existence}\label{sec:lowerbound}

We also can derive a lower bound for minimizers. The precise statement of it below takes on an unusual nonlocal form: it guarantees the existence of some (large) scale $R$, possibly depending on the point considered, where $\sup_{B_R(x)} u$ is at least $R^{\sigma}$. We note that this is not a ``uniform'' estimate (i.e. true \emph{for all} $R$), nor do we expect a uniform estimate of this form to be valid: we believe that the exponent $\sigma$ here is actually not the correct rate of growth for $u$ near $\partial \{u > 0\}$. Nonetheless, this estimate will be enough for concentration compactness arguments in proving existence below, while an optimal, uniform estimate appears to require new ideas.

\begin{lem}\label{lem:lowerbound}
	Let $u$ be a nonnegative minimizer (normalized so that $\int_{\R^n} u^2 = 1$) of $m(N)$ defined in \eqref{eq:equivalentvp}. Then there are small numbers $c_*, R_*$ (depending only on $n$ and $\sigma$) such that if $x$ is a Lebesgue point of $\{u > 0\}$, then for some $R \in [R_*, 1]$,
	\[
		\sup_{B_{R}(x)} u \geq c_* R^{\sigma}.
	\]
\end{lem}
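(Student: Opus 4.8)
The plan is to argue by contradiction and iteration, using the upper bound from Lemma \ref{lem:upperbound} against the eigenvalue equation. Suppose $x$ is a Lebesgue point of $\{u>0\}$ but $\sup_{B_R(x)} u < c_* R^\sigma$ for every $R\in[R_*,1]$. Without loss of generality take $x=0$. The first step is to translate this smallness hypothesis into a decay statement for the nonlocal tail: using $\sup_{B_R} u < c_* R^\sigma$ on dyadic annuli $B_{2^{j}R_*}\setminus B_{2^{j-1}R_*}$ (and the global bound $\|u\|_{L^\infty}\le C\|u\|_{L^2}=C$ from Theorem \ref{thm:Linfinitybound} for the far region $|y|\ge 1$), one estimates
\[
\int_{\R^n}\frac{u(y)}{(R+|y|)^{n+2\sigma}}\,\ud y \le C c_* R^{-\sigma} + C,
\]
so that the quantity $A=R^{2\sigma}\int \frac{u(y)}{(R+|y|)^{n+2\sigma}}\,\ud y$ appearing in Lemma \ref{lem:upperbound} is controlled by $C c_* R^{\sigma}+CR^{2\sigma}$, i.e. is itself small (after choosing $c_*,R_*$ small) compared with $R^\sigma$.

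The second step is the iteration. Plugging this tail bound into Lemma \ref{lem:upperbound}, with $\varepsilon$ a fixed small constant, gives
\[
\sup_{B_{R/2}} u \le C\varepsilon R^\sigma + C\varepsilon^{-n/(4\sigma)} R^{-n/2}\|u\|_{L^2(B_R)}.
\]
The idea is to run this on the dyadic sequence $R_k=2^{-k}$, $k=0,1,\dots,k_*$ with $2^{-k_*}\approx R_*$. Write $a_k = R_k^{-\sigma}\sup_{B_{R_k}} u$ and $b_k = R_k^{-n/2-\sigma}\|u\|_{L^2(B_{R_k})}$. From the displayed inequality one gets a recursion $a_{k+1}\le C\varepsilon + C\varepsilon^{-n/(4\sigma)} b_k$, and since by hypothesis $a_k\le c_*$ for all $k\le k_*$, the $L^2$ mass in $B_{R_k}$ is dominated by $\sup_{B_{R_k}}u$ times $|B_{R_k}|^{1/2}$, giving $b_k \le C a_k$. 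The nonlocal tail, being small relative to $R^\sigma$, does not spoil this. Chaining the estimates down the scales, the two small parameters $\varepsilon$ and $c_*$ force $\sup_{B_{R_k}} u$ to decay faster than any fixed rate — in particular $\sup_{B_{R_*}} u = o(R_*^\sigma)$ but more importantly the decay is geometric, so that $u\equiv 0$ on a whole ball $B_{R_*}$ (or at least $\int_{B_{R_*}} u^2$ is forced to $0$). This contradicts that $0$ is a Lebesgue point of $\{u>0\}$, which by the (reverse) eigenvalue equation of Corollary \ref{cor: equation} and interior positivity (Lemma \ref{lem:positiveeigenfunction}, or a nonlocal strong maximum principle for subsolutions) forces $\int_{B_\rho} u^2 > 0$ for every $\rho>0$. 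That is the contradiction.

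The main obstacle I expect is the bookkeeping of the nonlocal tail term through the iteration: unlike in the local case, $\sup_{B_{R/2}}u$ depends on the values of $u$ on \emph{all} of $\R^n$, so at each scale one must re-estimate $\int \frac{u(y)}{(R+|y|)^{n+2\sigma}}\,\ud y$ using the smallness already proven at \emph{larger} scales as well as the a priori bound at the smallest scales, and one has to verify the errors accumulate in a summable (geometric) fashion rather than compounding. The interplay of the two free parameters $\varepsilon$ (from Lemma \ref{lem:upperbound}) and $c_*$ is delicate: $\varepsilon$ must be chosen small to kill the $C\varepsilon R^\sigma$ term, but then $\varepsilon^{-n/(4\sigma)}$ is large and multiplies the $L^2$ term, so one needs the geometric gain in $b_k$ (a full power $R_k^{n/2}$ from Hölder on a set where $u$ is already known small) to beat it — this is exactly why the exponent obtained is $\sigma$ and not better, and why the estimate cannot be made uniform in $R$. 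A secondary point to handle carefully is the passage from "$\int_{B_{R_*}}u^2 = 0$'' to the contradiction with the Lebesgue-point hypothesis, which uses that a minimizer cannot vanish on an open subset of its positivity set — here one invokes Corollary \ref{cor: equation} together with a unique-continuation / strong-maximum-principle argument for the regional operator, as in the proof of Lemma \ref{lem:positiveeigenfunction}.
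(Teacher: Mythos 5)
Your overall framework (contradiction, dyadic descent via Lemma~\ref{lem:upperbound}, interplay of $\va$ and $c_*$) matches the paper, but there is a genuine gap in the recursion, and the final contradiction is also not quite right as stated.

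The recursion you write, $a_{k+1} \le C\va + C\va^{-n/(4\sigma)} b_k$, together with the H\"older bound $b_k \le C a_k$, does not close. With only H\"older you get $a_{k+1}\le C\va + C'\va^{-n/(4\sigma)}a_k$, and the two requirements (make the additive term $C\va$ small compared with $c_* R^\sigma$, and make the multiplier $C'\va^{-n/(4\sigma)}$ less than $1$) pull $\va$ in opposite directions, with no free parameter left to reconcile them. This is exactly why the paper does \emph{not} rely on H\"older alone: the core additional ingredient, which you never derive, is a \emph{measure estimate}
\[
|\{u > 0\}\cap B_{R/10}| \le C c_0 R^n,
\]
obtained by testing the minimality of $u$ against the competitor $w=\eta u$ (where $\eta$ vanishes on $B_{R/10}$ and is $1$ outside $B_{R/2}$), combining Lemma~\ref{lem:minimizereq2} with the Caccioppoli estimate of Lemma~\ref{lem:caccipolli1}. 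This measure bound upgrades the H\"older step to $\|u\|_{L^2(B_{R/10})} \le \sup_{B_{R/10}}u\cdot|\{u>0\}\cap B_{R/10}|^{1/2} \lesssim c_0^{3/2}R^{\sigma+n/2}$, i.e.\ $b_k \lesssim c_0^{3/2}$ rather than $c_0$. That extra factor of $c_0^{1/2}$ is what lets one fix $\va$ first (small but of order $1$, depending only on $n,\sigma$) and then shrink $c_*$ so that $C\va^{-n/(4\sigma)}c_*^{1/2}$ is small. Without the competitor step there is no such gain, and your iteration is vacuous.

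Secondly, even granting the decay, your proposed endgame (``$\int_{B_{R_*}}u^2$ is forced to $0$, then invoke unique continuation / strong maximum principle to contradict the Lebesgue-point hypothesis'') is both unnecessary and insufficient. Geometric decay of $\sup_{B_{R_k}}u$ only shows $u$ is small near $0$; it does not make $u$ vanish identically on a neighborhood of $0$, so it does not directly contradict the Lebesgue-point assumption, and bringing in unique continuation is not how the paper argues. The paper's contradiction is read off directly from the measure estimate: the iteration produces $|\{u>0\}\cap B_{R_k/10}|\le C(3/4)^k c_* R_k^n$, so the Lebesgue density of $\{u>0\}$ at $0$ is $0$, contradicting the hypothesis that $0$ is a Lebesgue point. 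So the competitor-derived measure estimate is doing double duty: it closes the recursion \emph{and} supplies the contradiction, and both roles are missing from your proposal.
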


\begin{proof}
	After a translation we may assume $x = 0$. Take $\eta: \R^n \rightarrow [0, 1]$ to be a radial cutoff function with $\eta =1$ outside $B_{R/2}$, $\eta = 0$ precisely on $B_{R/10}$, and $|\nabla \eta|\leq 4/R$. We will use $w = \eta u$ as a competitor for $u$. Note that $\{w > 0\} \subset \{u > 0\}$.
	
	First, reducing the domain of positivity decreased the associated Gagliardo norm:
	\[
		\mathscr{B}_w[w, w] \leq \mathscr{B}_u[w, w],
	\]
	where $\mathscr{B}_{\cdot}[\cdot,\cdot]$ is defined in \eqref{eq:blinearform0}. We may rewrite the right as
	\[
		\mathscr{B}_u[w, w] = \mathscr{B}_u[u, u] - 2 \mathscr{B}_u[v, u] + \mathscr{B}_u[v, v],
	\]
	where $v = u - w = (1- \eta)u$.	Now apply Lemma \ref{lem:minimizereq2} with $\phi = v$ to give
	\[
		\mathscr{B}_u[v, u] \geq \lambda \int_{\R^n} v u \geq 0,
	\]
	and thus,
	\[
		\mathscr{B}_w[w, w] \leq \mathscr{B}_u[u, u] + \mathscr{B}_u[v, v].
	\]
	It follows from Lemma \ref{lem:caccipolli1} that 
	\[
		\mathscr{B}_u[v, v] \leq C \int_{B_{R}} u(x)\,\ud x \int_{\R^n} \frac{u(y)}{(R + |y|)^{n + 2\sigma}} \,\ud y+ C R^{-2\sigma} \int_{B_{R}} u^2.
	\]
	
	Now from the minimality of $u$, we get
	\begin{align*}
		\mathscr{B}_u[u, u] + |\{u > 0\}|&\leq \frac{\mathscr{B}_w[w, w]}{\int w^2} + |\{w > 0\}| \\
		&\leq \frac{\mathscr{B}_u[u, u] + C\int_{B_{R}} u \int_{\R^n} \frac{u(y)}{(R + |y|)^{n + 2\sigma}}\,\ud y + C R^{-2\sigma} \int_{B_{R}} u^2}{1 - 2 \int_{\R^n} u v } \\
		&\quad+ |\{u > 0\} \setminus B_{R/10}|.
	\end{align*}
	Since $u$ is bounded (see Theorem \ref{thm:Linfinitybound}), and  $\int_{\R^n} uv \le \int_{B_{R/2}} u^2\le CR^2$, which can be small if $R\le R_0$ for some small $R_0$, we obtain
	\[
		|\{u > 0\} \cap B_{R/10}| \leq C \int_{B_{R}} u(x)\,\ud x \int_{\R^n} \frac{u(y)}{(R + |y|)^{n + 2\sigma}} \,\ud y+ C R^{-2\sigma} \int_{B_{R}} u^2.
	\]

	We now argue as follows. Assume that for all $S \in [R, R']$, where $R\le (R')^2\le 1$, we have that $\sup _{B_S} |u| \leq c_0 S^{\sigma}$, where $c_0 \leq c_*$ is small. We will show that this guarantees that
	\[
		\sup_{B_{R/20}} u \leq \frac{1}{2} c_0 \left(\frac{R}{20}\right)^\sigma +  \left(\frac{R}{20}\right)^{2\sigma} (R')^{- 2\sigma}.
	\]
	Indeed, we have that in this case,
	\begin{equation}
		\int_{B_R}u^2 \leq c_0^2 R^{n + 2\sigma} ,\quad  \int_{B_{R}} u(x)\,\ud x \int_{\R^n} \frac{u(y)}{(R + |y|)^{n + 2\sigma}} \,\ud y \leq C c_0 R^{n}[c_0 +R^\sigma(R')^{-2\sigma}], 
	\end{equation}
	so 
	\begin{equation}\label{e:i}
		|\{u > 0\} \cap B_{R/10}| \leq C c_0 R^n.
	\end{equation}
	Applying Lemma \ref{lem:upperbound},
	\begin{align}
		\sup_{B_{R/20}} u& \leq \va R^{2\sigma} \int_{\R^n} \frac{u(y)}{(R+|y|)^{n + 2\sigma}} \,\ud y+C\va^{-\frac{n}{4\sigma}} R^{-\frac{n}{2}}\|u\|_{L^2(B_{R/10})}\nonumber\\
		&\leq C \va R^{2\sigma}(c_0R^{-\sigma}+ (R')^{- 2\sigma}) + C\va^{-\frac{n}{4\sigma}} c_0^{3/2} R^{\sigma}\nonumber\\
		&= C\va c_0 R^\sigma+C\va R^{2\sigma}(R')^{-2\sigma}+  C\va^{-\frac{n}{4\sigma}} c_0^{3/2} R^{\sigma}.
	\end{align}
By choosing $\va=1/(2C(20)^\sigma)$, and $c_*$  sufficiently small, this gives the promised estimate.
	
	 Let $4  R_0^\sigma \le c_*<1/20$. Now assume that for $S \in [R_0, 1]$, $\sup_{B_S} u \leq c_* S^{\sigma}$.  We argue that this is a contradiction. We may apply our claim with $c_0 = c_*$ for every $R$ in $[R_0, 20R_0]$ (with $R' = 1$) to give that
	\begin{equation}
		\sup_{B_{S}} u \leq \frac{1}{2} c_* S^{\sigma} +  S^{2\sigma} \leq \frac{1}{2} c_* S^{\sigma} + R_0^\sigma S^{\sigma}  \leq \frac{3}{4}c_* S^{\sigma} \quad\mbox{for all } S \in [R_0/20, R_0].
	\end{equation}
 We may continue applying the claim with $c_0 = c_*$ and $R \in [R_0/20^k, R_0/20^{k-1}]$ with $R'=1$ to get, inductively, that this holds on $R \in [R_0/20^{k+1}, R_0/20^{k}]$. Hence, we have
 	\begin{equation}
		\sup_{B_{S}} u \leq \frac{1}{2} c_* S^{\sigma} +  S^{2\sigma} \leq \frac{3}{4}c_* S^{\sigma} \quad\mbox{for all } S \le R_0.
	\end{equation}
	
	Next, we may slightly improve this estimate. To do so, we set $R \leq 20 R_1 \ll R_0$ and $R' = R_0$, and apply the claim with $c_0 = \frac{3}{4}c_*$ instead. This gives
	\[
		\sup_{B_{S}} u \leq \frac{3}{8} c_* S^{\sigma} + S^{2\sigma}R_0^{-2\sigma} \leq [\frac{3}{8} c_*  + (R_1/R_0^2)^\sigma]S^{\sigma}\quad\mbox{for any }S \leq R_1.
	\]
If $(R_1/R_0^2)^\sigma = \frac{1}{4}\cdot \frac{3}{4}c_*$, this gives that
	\[
	\sup_{B_{S}} u \leq (\frac{3}{4})^2 c_* S^{\sigma}\quad\mbox{for all } S\le R_1.
	\]
 We may continue on in this fashion, choosing $(R_k/R_{k - 1}^2)^\sigma = \frac{1}{4} (\frac{3}{4})^k c_*$, to get that if for 
 \[
 \sup_{B_S} u \leq (3/4)^k c_* S^\sigma\quad\mbox{for all }S \leq R_{k-1},
 \]
then
	\[
		\sup_{B_S} u \leq [\frac{1}{2} (3/4)^{k} c_* + (R_k/R_{k-1}^2)^\sigma] S^\sigma \leq (3/4)^{k + 1} S^\sigma c_*\quad\mbox{for all }S\le R_k.
	\]

	Combining with the intermediate estimate \eqref{e:i}, it gives
	\[
		|\{u > 0\} \cap B_{R_k/10}| \leq C (\frac34)^k c_* R_k^n.
	\]
	This gives that the Lebesgue density of $\{u > 0\}$ at $0$ is $0$, which contradicts the assumptions.
\end{proof}

\begin{cor}\label{cor:L2below}
Let $u$ be as in Lemma \ref{lem:lowerbound}. There exists $c=c(n,\sigma)$ such that
\[
\int_{B_2(x)} u^2 \geq c.
\]
for every  Lebesgue point $x$ of $\{u > 0\}$
\end{cor}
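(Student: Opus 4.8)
The plan is to combine the lower bound from Lemma \ref{lem:lowerbound} with the local upper bound from Lemma \ref{lem:upperbound}. By Lemma \ref{lem:lowerbound}, for a Lebesgue point $x$ of $\{u>0\}$ there is a scale $R\in[R_*,1]$ with $\sup_{B_R(x)}u\ge c_*R^\sigma$, so in particular $\sup_{B_1(x)}u\ge c_*R_*^\sigma=:c_1>0$. The point is that a pointwise-positive bound like this forces a definite amount of $L^2$ mass nearby, because Lemma \ref{lem:upperbound} bounds $\sup_{B_{1/2}(y)}u$ by a combination of a tail term and $\|u\|_{L^2(B_1(y))}$, and the tail term can be absorbed using that $u$ is globally bounded (Theorem \ref{thm:Linfinitybound}) with $\|u\|_{L^2}=1$.

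The key steps, in order. First I would fix a Lebesgue point $x$, translate so $x=0$, and pick a point $z\in B_1$ (the one near which the supremum in Lemma \ref{lem:lowerbound} is nearly attained, at scale $R\le 1$) with $u(z)\ge \tfrac12 c_1$; note $B_1(z)\subset B_2$. Next I would apply Lemma \ref{lem:upperbound} centered at $z$ with $R=1$: this gives
\[
\tfrac12 c_1 \le u(z)\le \sup_{B_{1/2}(z)}u \le \va\int_{\R^n}\frac{u(y)}{(1+|y-z|)^{n+2\sigma}}\,\ud y + C\va^{-\frac{n}{4\sigma}}\|u\|_{L^2(B_1(z))}
\]
for all $\va\in(0,1]$. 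For the tail term I would estimate $\int_{\R^n}\frac{u(y)}{(1+|y-z|)^{n+2\sigma}}\,\ud y\le \|u\|_{L^\infty}\int_{\R^n}(1+|y|)^{-n-2\sigma}\,\ud y\le C(n,\sigma)\|u\|_{L^2(\R^n)}=C(n,\sigma)$ using Theorem \ref{thm:Linfinitybound}; alternatively one can bound it by $C\|u\|_{L^2}$ directly via Cauchy--Schwarz since $(1+|y|)^{-n-2\sigma}\in L^2$. Then I would choose $\va$ small enough (depending only on $n,\sigma$) that the tail term is $\le \tfrac14 c_1$, and absorb it. This leaves $\tfrac14 c_1\le C\va^{-n/4\sigma}\|u\|_{L^2(B_1(z))}$, hence $\|u\|_{L^2(B_2)}^2\ge \|u\|_{L^2(B_1(z))}^2\ge c(n,\sigma)$, which is the claim.

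I do not expect a serious obstacle here: this is essentially a bookkeeping argument chaining two already-proven estimates, and the only mild care needed is in handling the nonlocal tail in Lemma \ref{lem:upperbound} — one must use the global $L^\infty$ bound (Theorem \ref{thm:Linfinitybound}) together with the normalization $\|u\|_{L^2(\R^n)}=1$ to see that this tail is bounded by a constant, so that the interpolation parameter $\va$ can be tuned to make it negligible. One should also make sure the various radii line up, e.g.\ that the ball $B_1(z)$ appearing after applying Lemma \ref{lem:upperbound} at scale $R=1$ is contained in $B_2(x)=B_2$, which holds since $z\in B_1$.
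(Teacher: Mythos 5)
Your proof is correct and follows essentially the same route as the paper's: chain the lower bound of Lemma \ref{lem:lowerbound} with the upper bound of Lemma \ref{lem:upperbound}, bound the nonlocal tail via the $L^\infty$ estimate of Theorem \ref{thm:Linfinitybound} (together with $\|u\|_{L^2}=1$), and tune $\va$ to absorb it. The only cosmetic difference is that you fix the scale at $1$ and re-center at a near-maximizing Lebesgue point $z\in B_1(x)$, whereas the paper applies Lemma \ref{lem:upperbound} directly at the scale $R$ supplied by Lemma \ref{lem:lowerbound} centered at $x$ (choosing $\va = R^\sigma/(2C)$) and invokes $R\ge R_*$ at the end; both land in $B_2(x)$, and the only care needed in your version is to take $z$ to be a Lebesgue point of $u$ so that the pointwise value $u(z)$ is genuinely dominated by the essential supremum over $B_{1/2}(z)$ that Lemma \ref{lem:upperbound} controls.
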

\begin{proof}
Let $R$ and $R_*$ be the ones in Lemma \ref{lem:lowerbound}. Assume $x=0$. Combining the results in Lemma \ref{lem:upperbound} and Lemma \ref{lem:lowerbound}, we have
\begin{align*}
R^\sigma & \le \va R^{2\sigma} \int_{\R^n} \frac{u(y)}{(R+|y|)^{n + 2\sigma}} \,\ud y+C\va^{-\frac{n}{4\sigma}} R^{-\frac{n}{2}}\|u\|_{L^2(B_{2R})}\\
&\le C \va + C\va^{-\frac{n}{4\sigma}} R^{-\frac{n}{2}}\|u\|_{L^2(B_{2R})}.
\end{align*}
By choosing $\va=R^\sigma/(2C)$, we have
\[
\|u\|_{L^2(B_{2R})}\ge CR^{\frac{3n}{4}+\sigma}\ge CR_*^{\frac{3n}{4}+\sigma}.
\]
This finishes the proof.
\end{proof}

This leads to:

\begin{lem}\label{lem:boundeddiameter} 
Let $u$ be as in Lemma \ref{lem:lowerbound}.  Then there is a large number $K = K(n ,\sigma)$ and another nonnegative $v\in  \mathring H^\sigma(\R^n)$ with $\int_{\R^n} v^2 = 1$ such that $v$ is supported on $B_K$ and
	\[
		\mathscr{B}_v[v, v] + |\{v > 0\}|\leq \mathscr{B}_u[u, u] + |\{ u > 0\}|.
	\]
\end{lem}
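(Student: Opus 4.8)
The plan is to show that, up to a null set, $\{u>0\}$ lies in a ball of universal radius, and then to obtain $v$ by translating $u$. The heart of the matter is the following rigidity property of $E:=\{u>0\}$: \emph{$E$ admits no decomposition $E=E_1\sqcup E_2$ (modulo null sets) with $|E_1|,|E_2|>0$ and $\dist(E_1,E_2)>0$.}

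To prove this, suppose such a decomposition exists with gap $d=\dist(E_1,E_2)>0$. Set $p_a=\int_{E_a}u^2$ (so $p_1+p_2=1$ and $p_a>0$ since $u>0$ a.e.\ on $E_a$), $I_a=I_{n,\sigma,E_a}[u]$, and $I_{12}=2\iint_{E_1\times E_2}|u(x)-u(y)|^2|x-y|^{-n-2\sigma}\,\ud x\,\ud y\ge0$, so that $\mathscr{B}_u[u,u]=I_1+I_2+I_{12}$. Because the gap is positive, choose a Lipschitz cutoff $\eta_1$ equal to $1$ on $E_1$, equal to $0$ on $E_2$, and transitioning only inside the gap, where $u\equiv0$; then $\eta_1 u=u\mathbf 1_{E_1}$ everywhere. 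By a standard approximation as in the proofs of Lemmas \ref{lem:equiv}--\ref{lem:equivachieved} (multiplication by a cutoff preserves membership in $\mathring H^\sigma$, and $u$ is supported in $B_N$), $w_1:=\eta_1 u/\sqrt{p_1}$ is an admissible competitor for $m(N)$: it is nonnegative, supported in $B_N$, has $\|w_1\|_{L^2}=1$, $\{w_1>0\}=E_1$, and $\mathscr{B}_{w_1}[w_1,w_1]=I_1/p_1$. Minimality of $u$ then gives
\[
\frac{I_1}{p_1}+|E_1|\ \ge\ \mathscr{B}_u[u,u]+|\{u>0\}|\ =\ I_1+I_2+I_{12}+|E_1|+|E_2|,
\]
that is, $I_1\,p_2/p_1\ge I_2+I_{12}+|E_2|$; by symmetry, using $w_2=u\mathbf 1_{E_2}/\sqrt{p_2}$, also $I_2\,p_1/p_2\ge I_1+I_{12}+|E_1|$. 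The first inequality gives $I_2\le I_1\,p_2/p_1$ and the second gives $I_2\ge (p_2/p_1)(I_1+I_{12}+|E_1|)$; combining and cancelling $p_2/p_1>0$ forces $I_{12}+|E_1|\le0$, contradicting $|E_1|>0$.

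Now combine this with Corollary \ref{cor:L2below} and the normalization $\int u^2=1$. Choose a family $y_1,\dots,y_m$ of Lebesgue points of $E$ maximal subject to being pairwise at distance $\ge4$; since the balls $B_2(y_i)$ are disjoint, Corollary \ref{cor:L2below} forces $mc\le\int u^2=1$, so the family is finite with $m\le1/c$, and by maximality (and since almost every point of $E$ is a Lebesgue point of $E$) one has $E\subset\bigcup_{i=1}^m B_4(y_i)$ up to a null set. Form the graph on $\{1,\dots,m\}$ with $i\sim j$ iff $|y_i-y_j|\le8$. Were it disconnected, a partition of the vertices into two edge-free groups would produce two disjoint compact sets $K_1,K_2$ (finite unions of closed balls $\overline{B_4(y_i)}$) with $\dist(K_1,K_2)>0$; then $E_a:=E\cap K_a$ would be a decomposition of $E$ with $|E_a|>0$ and $\dist(E_1,E_2)>0$, contradicting the rigidity property. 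Hence the graph is connected, every $y_i$ lies within $8(m-1)\le8/c$ of $y_1$, and therefore $E\subset B_{8/c+4}(y_1)$ up to a null set.

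Finally, set $K:=8/c+5$ and $v(x):=u(x+y_1)$. Then $v\in\mathring H^\sigma(\R^n)$ is nonnegative and supported in $B_K$, $\int v^2=\int u^2=1$, and since the Gagliardo kernel is translation invariant, $\mathscr{B}_v[v,v]=\mathscr{B}_u[u,u]$ and $|\{v>0\}|=|\{u>0\}|$; in particular $\mathscr{B}_v[v,v]+|\{v>0\}|\le\mathscr{B}_u[u,u]+|\{u>0\}|$, as required. The main obstacle is the rigidity property: one has to notice that comparing $u$ with \emph{both} halves $w_1,w_2$ and combining the two resulting inequalities yields a contradiction, and one must verify that the half-competitors $w_a$ are genuinely admissible for $m(N)$ with no cutoff error — which works only because $\dist(E_1,E_2)>0$ lets the cutoff's transition region sit entirely in $\{u=0\}$, so that $w_a$ coincides with $u\mathbf 1_{E_a}$ up to a harmless rescaling.
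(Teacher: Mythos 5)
Your proof is correct, and it takes a genuinely different route from the paper's. The paper covers $\{u>0\}$ (a.e.) by a Besicovitch family $\{B_2(x_i)\}$, bounded in number by Corollary \ref{cor:L2below}, and then splits into two cases: if $\bigcup_i B_2(x_i)$ is connected, a translate of $u$ already works; if not, it decomposes $u=\sum_j u_j$ over the connected components, rescales each $u_j$ by $R_j=(|\{u_j>0\}|/|\{u>0\}|)^{1/n}$, and observes that the weighted average of the rescaled energies equals $|\{u>0\}|+\sum_j R_j^{2\sigma}\mathscr{B}_{u_j}[u_j,u_j]$, which is strictly less than $\mathscr{B}_u[u,u]+|\{u>0\}|$ because the Gagliardo cross terms are positive and $R_j^{2\sigma}<1$; the paper then picks the best rescaled piece as $v$, noting $R_j\ge c$ keeps the support bounded. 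Your argument instead establishes a structural rigidity: a minimizer's support cannot split into two positive-measure pieces at positive mutual distance. The key move — testing minimality against \emph{both} normalized restrictions $w_a=u\mathbf 1_{E_a}/\sqrt{p_a}$ and combining the inequalities $I_1 p_2/p_1\ge I_2+I_{12}+|E_2|$ and $I_2 p_1/p_2\ge I_1+I_{12}+|E_1|$ to force $I_{12}+|E_1|\le 0$ — is clean, and the admissibility of $w_a$ is indeed precisely what the positive gap buys, since the Lipschitz cutoff can transition entirely inside $\{u=0\}$. Combined with a maximal $4$-separated family and graph connectivity (a different but equivalent device to the paper's finite-overlap cover and its components), this shows that $\{u>0\}$ itself already lies in a ball of radius $8/c+4$, so $v$ is just a translate of $u$. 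Your version gives the slightly stronger information that \emph{every} minimizer of $m(N)$ has support of universally bounded diameter, at the cost of using the full strength of minimality twice; the paper's rescaling construction is more purely competitor-based and would exhibit a strictly better function if $\{u>0\}$ were fragmented, which is what it uses instead of a contradiction. Both routes depend on Corollary \ref{cor:L2below} to bound the number of unit-scale blocks, and both produce $K=K(n,\sigma)$.
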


\begin{proof}
	Select a representative of $u$ which vanishes except at Lebesgue points of $\{u > 0\}$. By the Besicovitch covering theorem, from the collection of balls $\{B_2(x) : x\in \{u> 0\} \}$, we can select a finite-overlapping subcover of $\{u > 0\}$, $U=\{B_2(x_i)\}_{i \in I}$. Together with Corollary \ref{cor:L2below}, we have from above that
	\[
		\int_{B_2(x_i)} u^2 \geq c,
	\]
	but
	\[
		\sum_i \int_{B_2(x_i)} u^2 \leq C \int_{\{ u > 0 \}} u^2 = C.
	\]
	Let $M = \#(I)$ be the number of balls in $U$: this gives that $M < \infty$ and bounded in terms of $n$ and $\sigma$ only. 
	
	Assume that $u$ has the following property: $u = \sum_i u_i$, where $u_i$ are nonzero and $|\{u_i > 0\} \cap \{u_j > 0\}| = 0$ for all $i\neq j$. Then we see that
	\[
	\sum_i \mathscr{B}_{u_i}[u_i, u_i] =\sum_{i} \mathscr{B}_{u_i}[u, u] < \mathscr{B}_u[u, u],
	\]
	as all the cross terms are positive. Also
	\[
	\sum_{i} \int_{\R^n} u_i^2 = 1 \qquad \sum_{i} |\{u_i > 0\}| = |\{u > 0\}|.
	\]
	Set $v_i(x) = R^{-n/2}_i u_i(R_ix)(\int_{\R^n} u_i^2)^{-1/2}$ (this has $\int_{\R^n} v_i^2 = 1$ ) and compute the energy:
	\[
		\mathscr{B}_{v_i}[v_i, v_i] + |\{v_i > 0\}| = R_i^{2s} \frac{\mathscr{B}_{u_i}[u_i, u_i]}{\int_{\R^n} u_i^2} + R_i^{-n} |\{u_i > 0\}|.
	\]
	Choose $R_i = (|\{u_i > 0\}|/|\{u > 0\}|)^{1/n}$, noting that $R_i^{2s} < 1$. Then we have that
	\[
		\sum_i \int_{\R^n} u_i^2 [\mathscr{B}_{v_i}[v_i, v_i] + |\{v_i > 0\}|] = |\{u > 0\}| + \sum_i R_i^{2s} \mathscr{B}_{u_i}[u_i, u_i] < \mathscr{B}_u[u, u] + |\{u > 0\}|.
	\]
	Since $\sum_i \int_{\R^n} u_i^2 = 1$, then at least one of the $v_i$ has to have
	\[
		\mathscr{B}_{v_i}[v_i, v_i] + |\{v_i > 0\}| < \mathscr{B}_u[u, u] + |\{u > 0\}|.
	\]
	
	Let $\{U_j\}_{j\in J}$ be the connected components of the set $\cup_{i\in I} B_2(x_i)$. If there is only one connected component, then the function $v(x) = u(x - x_1)$ is supported on $B_{M + 1}$ and we may conclude. If not, let $u_j = u|_{U_j}$ and apply the above construction to find one $v_i$ with energy less than $u$. Note that as $|\{ u_i > 0 \}| \geq c \int_{U_1}u^2 \geq c$, we have that $R_i \geq c$, so a translate of $v_i$ will be supported on $B_{(M+1)/c}$. We use this $v_i$ to conclude.
\end{proof}

Now we are ready to prove Theorem \ref{thm:existenceg}.

\begin{proof}[Proof of Theorem \ref{thm:existenceg}]
Let $K(n,\sigma)$ be the one in Lemma \ref{lem:boundeddiameter}, and $N_1,N_2 >K(n,\sigma)$. Let $u_1$ and $u_2$ be minimizers of $m(N_1)$ and $m(N_2)$. Due to the translation invariance of $m(N)$, $u_1$ is a valid competitor of $u_2$ for $m(N_2)$, and $u_2$ is a valid competitor of $u_1$ for $m(N_1)$. Therefore, $m(N_1)=m(N_2)$. Because of \eqref{eq:infinumequal}, $u_1$ is a desired minimzier. 
\end{proof}

\section{Discussion and an open question}\label{sec:discussion}
We know from Theorem \ref{thm:existenceg} that the minimizing set is bounded with bounded support. A natural question is whether the minimizing eigenfunction $u_0$ is continuous (and in particular, whether $\{u_0 > 0\}$ admits an open representative); this was stated as Open Question \ref{ques:continuity}. We do not know how to prove this, and in this section, we would like to discuss the difficulties and present some related open questions.

\begin{lem}\label{lem:harmonic}
Let $\Omega\subset\R^n$ be a fixed set such that $B_{2r}(x_0)\subset\Omega$. Let $v\in\mathring H^\sigma(\R^n)$ be weak solution of $(-\Delta)^\sigma_\Omega v=0$ in $B_r(x_0)$. Assume that $|v|\le 1$ on $\R^n$. Then for every $\alpha\in(0,1)$, there exists $C>0$ depending only on $n, \alpha$ and $\sigma$, such that
\[
[v]_{C^\alpha(B_{r/2}(x_0))}\le C r^{-\alpha}. 
\]
\end{lem}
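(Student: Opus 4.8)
The plan is to reduce the statement to a known interior H\"older estimate for weak solutions of equations driven by the full fractional Laplacian $(-\Delta)^\sigma_{\R^n}$, by absorbing the difference between the regional and full operators into a bounded, smooth zeroth-order coefficient plus a bounded right-hand side. Recall that for $v$ supported appropriately (or simply a bounded function in $\mathring H^\sigma(\R^n)$) one has, exactly as in the proof of Lemma \ref{lem:positiveeigenfunction}, the pointwise identity
\[
(-\Delta)^\sigma_{\R^n} v(x) = (-\Delta)^\sigma_\Omega v(x) + 2\,v(x)\int_{\R^n\setminus\Omega}\frac{1}{|x-y|^{n+2\sigma}}\,\ud y
\]
for $x\in\Omega$. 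Writing $c(x) = 2\int_{\R^n\setminus\Omega}|x-y|^{-n-2\sigma}\,\ud y$, the hypothesis $(-\Delta)^\sigma_\Omega v = 0$ in $B_r(x_0)$ becomes, in the weak sense,
\[
(-\Delta)^\sigma_{\R^n} v = c\,v \quad\text{in } B_r(x_0).
\]
Since $B_{2r}(x_0)\subset\Omega$, for $x\in B_r(x_0)$ the complement $\R^n\setminus\Omega$ lies at distance at least $r$ from $x$, so $0\le c(x)\le C(n,\sigma) r^{-2\sigma}$, and $c$ is smooth on $B_r(x_0)$ with all derivatives controlled by the corresponding negative powers of $r$; combined with $|v|\le 1$ on all of $\R^n$, the right-hand side $f:=cv$ satisfies $\|f\|_{L^\infty(B_r(x_0))}\le C r^{-2\sigma}$.

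Next I would invoke the interior regularity theory for the fractional Laplacian — for instance the estimates of Caffarelli--Silvestre / Ros-Oton--Serra, or the $L^\infty$-to-$C^\alpha$ estimate for weak solutions in the style of Di Castro--Kuusi--Palatucci \cite{CKP} — which gives, for $v$ bounded on $\R^n$ and solving $(-\Delta)^\sigma_{\R^n} v = f$ weakly in $B_r(x_0)$ with $f\in L^\infty$,
\[
[v]_{C^\alpha(B_{r/2}(x_0))} \le C\Big( r^{-\alpha}\|v\|_{L^\infty(\R^n)} + r^{2\sigma-\alpha}\|f\|_{L^\infty(B_r(x_0))}\Big)
\]
for any $\alpha\in(0,\min\{1,2\sigma\})$ (and in fact any $\alpha\in(0,1)$ after one bootstrap step, using that $2\sigma>1$ here since $\sigma\in(\tfrac12,1)$). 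The scaling in $r$ is dictated by homogeneity: rescaling $\tilde v(z)=v(x_0+rz)$ turns the equation into $(-\Delta)^\sigma_{\R^n}\tilde v = r^{2\sigma}\tilde f(x_0+rz)$ on $B_1$ with $\|\tilde v\|_{L^\infty(\R^n)}\le 1$ and $\|r^{2\sigma}\tilde f\|_{L^\infty(B_1)}\le C$, so the unit-scale estimate $[\tilde v]_{C^\alpha(B_{1/2})}\le C$ unscales to exactly the claimed bound. Plugging in $\|v\|_{L^\infty(\R^n)}\le 1$ and $\|f\|_{L^\infty}\le C r^{-2\sigma}$ makes both terms on the right of the displayed estimate of order $r^{-\alpha}$, which is the conclusion.

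The only genuinely delicate point is making sure the cited interior estimate applies in the present generality: $v$ is merely a weak $\mathring H^\sigma$ solution (not assumed a priori continuous), the equation holds only in a ball, and the ``tail'' contribution of $v$ from outside $B_r(x_0)$ must be controlled — but this is precisely why the global bound $|v|\le1$ on $\R^n$ is assumed, as it makes the nonlocal tail $\int_{\R^n}|v(y)|(r+|y-x_0|)^{-n-2\sigma}\,\ud y$ finite and of order $r^{-2\sigma}$, exactly the same order as $f$. So I expect the main (and really only) obstacle to be bookkeeping: citing the right version of the De Giorgi--Nash--Moser-type estimate for nonlocal equations that simultaneously allows a bounded right-hand side, a bounded global datum, and a local equation, and then tracking the $r$-powers carefully through the rescaling. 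Everything else is the elementary estimate on $c$ and its derivatives already carried out in Lemma \ref{lem:positiveeigenfunction}.
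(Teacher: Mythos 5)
Your proof is correct and follows essentially the same path as the paper's: both rewrite $(-\Delta)^\sigma_\Omega v = 0$ in $B_r(x_0)$ as a full-space fractional equation $(-\Delta)^\sigma_{\R^n} v = c\,v$ with $c(x) = 2\int_{\R^n\setminus\Omega}|x-y|^{-n-2\sigma}\,\ud y$ smooth and bounded on the interior ball, invoke standard interior $C^\alpha$ regularity for $(-\Delta)^\sigma_{\R^n}$ with a bounded right-hand side and globally bounded datum, and recover the $r$-dependence by scaling. The paper normalizes to $r=1$ first and then rescales at the end, while you carry the $r$-powers explicitly; these are the same computation, and your side remark about a bootstrap is unnecessary (since $\sigma>\tfrac12$, the Caffarelli--Silvestre/Ros-Oton--Serra estimate already gives $C^{2\sigma}\supset C^1$ directly) but not a gap.
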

\begin{proof}
We first assume that $r=1$ and $x_0=0$. Since $B_2\subset\Omega$, we know that $\int_{\R^n\setminus\Omega}\frac{1}{|x-y|^{n+2\sigma}}\,\ud y$ is a smooth function in $B_{3/2}$ and all its derivative are bounded in $B_{3/2}$ independent of $\Omega$. Then it follows from standard estimates for the fractional Laplacian in the whole space that for every $\alpha\in(0,1)$,
\[
[v]_{C^\alpha(B_{1/2})}\le C,
\]
where $C$ depends only on $n,\alpha$ and $\sigma$.

In general, we let $w(x)=v(x_0+rx)$. Then
\[
(-\Delta)^\sigma_{\widetilde\Omega} w=0\quad \mbox{in } B_1,
\]
where $\widetilde\Omega=(\Omega-x)/r$ containing $B_2$. Then we have
\[
[w]_{C^\alpha(B_{1/2})}\le C.
\]
Rescaling back, we have
\[
[v]_{C^\alpha(B_{r/2}(x_0))}\le C r^{-\alpha}.
\]
\end{proof}

Let $u_0$ be a minimizer in Theorem \ref{thm:existenceg}. Let us normalize it so that $\|u_0\|_{L^2(\R^n)}=1$, and hence, $u_0\le C$ in $\R^n$. Let $x_0\in\R^n,R>0$, and 
\[
\Omega=\{u_0>0\}\cup B_{2R}(x_0).
\]
Let $h\in\mathring H(\R^n)$ be the solution of $(-\Delta)^\sigma_{\Omega} h=0$ in $B_R(x_0)$ and $h\equiv u_0$ in $\R^n\setminus B_R(x_0)$.  Since $u_0\ge 0$, we have that $h\ge 0$ in $\Omega$. Since $u_0\le C$, we have $h\le C$. 

Moreover,
\begin{align}
\mathscr{B}_\Omega[u_0-h,u_0-h]&=\mathscr{B}_\Omega[u_0-h,u_0+h]-2\mathscr{B}_\Omega[u_0-h,h]\nonumber\\
&=\mathscr{B}_\Omega[u_0,u_0]-\mathscr{B}_\Omega[h,h]\nonumber\\
&\le \mathscr{B}_\Omega[u_0,u_0]-\mathscr{B}_{\{h>0\}}[h,h]\label{eq:bilineaadhoc},
\end{align}
where 
\[
\mathscr{B}_\Omega[v,\varphi]=\iint_{\Omega\times\Omega}\frac{(v(x)-v(y))(\varphi(x)-\varphi(y))}{|x-y|^{n+2\sigma}}\,\ud x\ud y,
\]
and we used the equation of $h$ in the second inequality so that $\mathscr{B}_\Omega[u_0-h,h]=0$. On the other hand, using $h$ as a competitor for $u_0$, we know that
\be\label{eq:competitor}
\mathscr{B}_{\{u_0>0\}}[u_0,u_0] + |\{u_0>0\}| \le \frac{\mathscr{B}_{\{h>0\}}[h,h]}{\|h\|_{L^2(\Omega)}} + |\Omega|.
\ee
Since
\[
\int_{\R^n} |h^2-u_0^2|\le CR^n, 
\]
we have
\[
\int_{\R^n} h^2\ge1- CR^n.
\]
From $\mathscr{B}_\Omega[u_0-h,h]=0$, we have $\mathscr{B}_\Omega[h,h]\le \mathscr{B}_\Omega[u_0,u_0]\le \mathscr{B}_{\R^n}[u_0,u_0]\le C$. Then, from \eqref{eq:competitor}, we obtain
\[
\mathscr{B}_{\{u_0>0\}}[u_0,u_0] \le \mathscr{B}_{\{h>0\}}[h,h]+CR^n.
\]
Therefore, from \eqref{eq:bilineaadhoc}, we obtain
\[
\mathscr{B}_\Omega[u_0-h,u_0-h]\le \mathscr{B}_\Omega[u_0,u_0]-\mathscr{B}_{\{u_0>0\}}[u_0,u_0]+CR^n.
\]
Using the Poincar\'e inequality, we have 
\[
\int_{\R^n}|u_0-h|^2= \int_{B_R(x_0)}|u_0-h|^2 \le C R^{2\sigma}\mathscr{B}_{B_R(x_0)}[u_0-h,u_0-h].
\]
Hence, 
\begin{align*}
\int_{\R^n}|u_0-h|^2 &\le  C R^{2\sigma} (\mathscr{B}_\Omega[u_0,u_0]-\mathscr{B}_{\{u_0>0\}}[u_0,u_0])+CR^{n+2\sigma}\\
&=  C R^{2\sigma} \int_{\{u_0>0\}}u_0(x)^2 \int_{B_{2R}(x_0)\setminus\{u_0>0\}}\frac{\ud y}{|x-y|^{n+2\sigma}}\,\ud x+CR^{n+2\sigma}.
\end{align*}

\begin{ques}
Is it true that
\begin{align*}
\int_{\{u_0>0\}}u_0(x)^2 \int_{B_{2R}(x_0)\setminus\{u_0>0\}}\frac{\ud y}{|x-y|^{n+2\sigma}}\,\ud x\le CR^{N}
\end{align*}
for some $N>n-2\sigma$?
\end{ques}

This is the difficulty of the above attempt of proving the minimizer's global continuity. We do not know how to obtain such a sufficient decay estimate. If the solution to this open question is true, then it will lead to an affirmative answer  to Open Question \ref{ques:continuity}.




	The boundary behavior of solutions to homogeneous Dirichlet problems for the regional fractional Laplace equation (on sufficiently regular domains) was studied by  Chen-Kim-Song \cite{ChKiSo}, culminating in two-sided heat kernel estimates (and consequently, the two-sided Green's function estimates for the regional fractional Laplacian in Proposition 4.2 of \cite{FJX}). These results show that solutions on $\Omega$ grow like $d^{2s - 1}(\cdot, \partial \Omega)$ from the boundary. It is natural, then, to ask whether such growth estimates can be established for our minimizers $u_0$ near $\pa\{u_0>0\}$:

\begin{ques} \label{q:3}
Does there exist $C>0$ such that 
\begin{align*}
u_0(x)\le C [\mbox{dist}(x,\pa\{u_0>0\})]^{2\sigma-1}\quad \forall \,x\in\{u_0>0\}?
\end{align*}
\end{ques}

	The point here is that we do not know that $\partial \{u_0 > 0\}$ is regular, so this is does not follow directly from \cite{ChKiSo}. A related question is whether there is a complementary uniform \emph{lower} bound on the growth of solutions:

\begin{ques}\label{q:4}
Does there exist $c>0$ such that for all $R\in (0,1)$ and all $x\in\pa\{u_0>0\}$, 
\begin{align*}
\sup_{B_R(x)} u_0 \geq c R^{2\sigma-1}?
\end{align*}
\end{ques}

	This form of a lower bound is common in the free boundary literature. Note that the scaling here, $R^{2s - 1}$, is different from what was used in Lemma \ref{lem:lowerbound} to obtain a non-uniform lower bound: indeed, this difference in scaling explains why the estimate there could not be uniform in $R$. It also suggests new ideas, and possibly new competitor sets and functions, are needed to answer Open Questions \ref{q:3} and \ref{q:4}, as most of the arguments in this paper do not detect this $R^{2s-1}$ scaling.

	The Euler-Lagrange equation for the minimizer is another interesting open question. We have already seen that $u_0$ satisfies 
$(-\Delta)_{\{u_0>0\}}^\sigma u_0=\lambda u_0$ in $\{u_0>0\}$, but by analogy to the second order case \cite{AC} we expect another pointwise equation to be satisfied by $u_0$ along the boundary $\partial \{u_0 > 0\}$, perhaps of the following form:

\begin{ques}
Does $\pa\{u_0>0\}$ have some approximate normal vectors $\nu_x$ in an appropriate sense, and does \begin{align*}
\lim_{t \rightarrow 0} \frac{u_0(x - t \nu_x)}{t^\alpha} = const?
\end{align*}
\end{ques}

Finally, let us suggest the following question about the shape of minimizers. Unlike the previous sequence of questions, a positive answer cannot be attained by local regularity or competitor arguments, and requires some new global approach.

\begin{ques}
	Are minimizers $u_0$ radial and unique up to translations?
\end{ques}

\bigskip

\noindent T. Jin

\noindent Department of Mathematics, The Hong Kong University of Science and Technology\\
Clear Water Bay, Kowloon, Hong Kong\\[2mm]
 \textsf{Email: tianlingjin@ust.hk}

\bigskip

\noindent D. Kriventsov

\noindent Department of Mathematics, Rutgers University\\
110 Frelinghuysen Road, Piscataway, NJ 08854, USA \\[2mm]
 \textsf{Email: dnk34@math.rutgers.edu }

\bigskip

\noindent J. Xiong

\noindent School of Mathematical Sciences, Laboratory of Mathematics and Complex Systems, MOE\\
Beijing Normal University, Beijing 100875, China\\[2mm]
 \textsf{Email: jx@bnu.edu.cn}
 
\end{document}